\documentclass[11pt]{amsart}
\usepackage{amscd}
\usepackage{pifont}
\usepackage{epsfig}
\usepackage{amsmath,amsfonts}
\numberwithin{equation}{section}
\usepackage{graphicx, color}
\usepackage[backend=biber,style=numeric,
            maxbibnames=99,   
            maxcitenames=99   
           ]{biblatex}
\usepackage[colorlinks=true,linkcolor=blue,citecolor=blue,urlcolor=blue]{hyperref}

\usepackage{cleveref}
\newtheorem{theorem}{Theorem}
\newtheorem{lemma}{Lemma}[section]
\newtheorem{proposition}{Proposition}[section]
\newtheorem{corollary}{Corollary}[section]
\theoremstyle{definition}
\newtheorem{definition}{Definition}[section]

\newtheorem{remark}{Remark}[section]
\usepackage{enumerate}
\def\dis{\displaystyle}
	\newcommand{\dint}{\dis\int}
	
\usepackage{multirow}
\usepackage{hhline}

\newcommand\be{\begin{equation}}
\newcommand\ee{\end{equation}}
\newcommand\bea{\begin{eqnarray}}
\newcommand\eea{\end{eqnarray}}
\newcommand\beaa{\begin{eqnarray*}}
\newcommand\eeaa{\end{eqnarray*}}
\newcommand\bay{\begin{array}}
\newcommand\eay{\end{array}}
\newcommand\ba{\begin{align}}
\newcommand\ea{\end{align}}
\newcommand{\R}{\mathbb{R}}

\numberwithin{equation}{section}

\title[nonlocal diffusion systems]{Harnack-type inequalities and traveling waves for non-cooperative nonlocal diffusion systems}
\author[B.-S. Chen]{Bo-Sheng Chen}
\address{Department of Applied Mathematics, National Yang Ming Chiao Tung University, Hsinchu 300, Taiwan.}
\email{boshengchen.sc14@nycu.edu.tw}
\author[C.-H. Wu]{Chang-Hong Wu}
\address{Department of Applied Mathematics, National Yang Ming Chiao Tung University, Hsinchu 300, Taiwan.}
\email{changhong@math.nctu.edu.tw}

\date{\today}
\begin{document}
\thanks{{\em 2020 Mathematics Subject Classification.} 35K57, 35R09, 35C07.}

\thanks{{\em Keywords and phrases: Harnack's inequality, nonlocal diffusion, non-cooperative systems, traveling waves.}}

\setlength{\baselineskip}{14pt}
\begin{abstract}
In this paper, we establish a system-wide $\ell^1$-norm Harnack-type inequality for positive solutions of weakly coupled nonlocal diffusion systems on $\mathbb{R}$, without assuming cooperativity or irreducibility of the coupling matrices. As a primary application, we integrate this analytical tool with a rescaling argument to establish the existence of the minimal wave speed for traveling waves for a class of non-cooperative nonlocal diffusion systems with network structures. Furthermore, we derive the precise asymptotic behavior of wave profiles at negative infinity. This is achieved by combining Harnack-type inequalities and Ikehara's theorem with Riesz projections to analyze singularities of vector-valued Laplace transforms. 
\end{abstract}

\maketitle
\section{Introduction}

Positive solutions of elliptic and parabolic weakly coupled systems often satisfy componentwise Harnack inequalities (Chen and Zhao \cite{chen1997harnack}, Arapostathis et al. \cite{arapostathis1999harnack}, F{\"o}ldes and Pol{\'a}cik \cite{foldes2009cooperative}); stronger forms relating different components typically require irreducibility of the cooperative coupling. Such inequalities provide local comparisons for each
individual component at nearby spatial points.
Beyond these local comparisons, Harnack inequalities act as a quantitative nondegeneracy principle. 

In wave propagation problems, these Harnack estimates help propagate positivity across the coupled system, establish persistence estimates, and prevent partial extinction.
They also play a central role in compactness arguments: after translation and normalization, Harnack estimates allow one to extract nontrivial positive entire limits. This compactness mechanism is a key step in connecting the existence of positive entire solutions of the linearized system to positive exponential modes whose exponents satisfy the associated characteristic equation  \cite{girardin2017non,lam2018traveling}.

In wave propagation problems involving nonlocal diffusion, related Harnack-type inequalities for scalar equations or two/three-component systems with particular
coupling have been established in \cite{zhang2012spreading,dong2017asymptotic}. 
Extending such estimates to general fully coupled $n$-component systems, however, is far from straightforward. In view of these difficulties, it is natural to ask whether the full strength of a componentwise Harnack inequality is needed in the study of wave propagation in nonlocal diffusion systems. More specifically, one may ask whether a weaker quantitative nondegeneracy estimate, available under less restrictive structural assumptions, would still provide sufficient control for the relevant normalization and limiting
arguments. This question motivates us to seek aggregate control of the entire vector solution rather than separate estimates for its individual components.

To this end, we establish a system-wide $\ell^1$-norm Harnack-type inequality. More precisely, our inequality provides a local comparison of the total size of a positive vector solution at nearby spatial points. Notably, it requires neither cooperativity nor irreducibility of the coefficient matrices. Although weaker than a full componentwise Harnack inequality, it retains precisely the quantitative control needed for the normalization, persistence, and limiting arguments arising in the study of nonlocal wave propagation.
The estimate may also be of independent interest in the linear theory of nonlocal diffusion systems.

To precisely state this inequality, we first formulate the basic assumption on the dispersal kernel.
Throughout this paper, we always assume that the dispersal kernel $J:\mathbb{R}\to\mathbb{R}$ satisfies (J0):
\begin{enumerate}
    \item[(J0)]$J(s)\geq 0$ for all $s\in\mathbb{R}$; $J(0)>0$; $J$ is continuous at $0$; $J\in L^1(\mathbb{R})$ with $\int_{\mathbb{R}}J(s)ds=1.$
\end{enumerate}
Our $\ell^1$-norm Harnack-type inequality is stated as follows.

\begin{theorem}[$\ell^1$-norm Harnack-type inequality]\label{generalizedHarnack}
   Assume that $J$ satisfies {\rm{(J0)}} and $c$ is a nonzero real constant. 
Let $\mathcal{D}(s)=\mathrm{diag}(d_1(s),\dots,d_n(s))$, in which $d_i(\cdot)\in\mathcal{C}(\mathbb{R})$ for each $i=1,\dots,n$. 
   Suppose that there exist $d_{\max}\geq d_{\min}>0$ such that $d_{\min}\leq  d_i(s)\leq d_{\max}$ for each $i=1,\dots,n$ and each $s\in\mathbb{R}$.
   Let $P_1$ and $P_2$ be any two $n\times n$ real matrices
  and let $p=(p_1,\ldots,p_n)
    \in\mathcal{C}^1\big(\mathbb{R};(0,\infty)^n\big)$
satisfy, componentwise,
    \begin{align}\label{linearinequality}
        P_1p(s)\leq cp'(s)-\mathcal{D}(s)\bigg(\int_{\mathbb{R}}J(s-s')p(s')ds'-p(s)\bigg)\leq P_2p(s),\quad s\in\mathbb{R}.
    \end{align}
Then there exists a constant $M>0$, depending only on
$J$, $c$, $d_{\max}$, $d_{\min}$, $P_1$, and $P_2$, such that the following statements hold.
    \begin{enumerate}[(a)]
        \item 
        For all $s\in\mathbb{R}$,
        \begin{align*}
            \frac{1}{|p(s)|_1}\left|\sum_{i=1}^np_i'(s)\right|\leq M.
        \end{align*}
        where $|p(s)|_1:=\sum_{i=1}^n|p_i(s)|.$
        \item For any $r>0,$ 
        \begin{align*}
            e^{-M r}\leq\frac{|p(\xi+s)|_1}{|p(\xi)|_1}\leq e^{M r}\mbox{ for all }\xi\in\mathbb{R}\mbox{ and }s\in[-r,r].
        \end{align*}
    \end{enumerate}    
\end{theorem}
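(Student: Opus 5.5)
Write $S(s):=\sum_{i=1}^n p_i(s)=|p(s)|_1$, which is positive because each $p_i>0$. Let $K:=\max\{\|P_1\|,\|P_2\|\}$, where $\|\cdot\|$ is the largest absolute column sum, so that $|\mathbf 1^TP_kp|\le K S$ for $p>0$. Replacing $s$ by $-s$ turns $c$ into $-c$ and $J$ into $J(-\cdot)$, which still satisfies (J0). So I would assume $c>0$ and then recover $c<0$ by this reflection. Part (b) follows from part (a) by integrating $|(\log S)'|\le M$ over $[\xi,\xi+s]$. The whole task is therefore the pointwise bound $|S'(s)|\le M S(s)$.

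\textbf{Step 1 (one-sided bound).} Summing the left inequality in \eqref{linearinequality} over $i$ and using $J*p_i\ge 0$ and $d_i\le d_{\max}$ gives
\[
cS'(s)\ \ge\ \sum_i d_i(s)(J*p_i)(s)-d_{\max}S(s)-KS(s)\ \ge\ d_{\min}(J*S)(s)-(d_{\max}+K)S(s).
\]
In particular $S'\ge-\lambda S$ with $\lambda:=(d_{\max}+K)/c$. Hence $\tau\mapsto e^{\lambda\tau}S(\tau)$ is nondecreasing, and $S(s+t)\ge e^{-\lambda t}S(s)$ for $t\ge0$.

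\textbf{Step 2 (reduction to a nonlocal upper bound).} Summing the right inequality gives $cS'\le d_{\max}(J*S)+KS$. So (a) follows once I show
\[
(J*S)(s)\le C\,S(s)\quad\text{for all } s,
\]
with $C$ depending only on the listed data.

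\textbf{Step 3 (bounding $J*S$ by local averages of $S$).} By (J0), there are $\delta>0$ and $j_0>0$ with $J\ge j_0$ on $[-\delta,\delta]$, so $(J*S)(\tau)\ge j_0\int_{\tau-\delta}^{\tau+\delta}S$. Multiplying the inequality of Step 1 by $e^{\lambda\tau}/c$ and integrating over $[s,s+h]$ gives
\[
e^{\lambda h}S(s+h)\ \ge\ S(s)+\frac{d_{\min}}{c}\int_s^{s+h}(J*S)(\tau)\,d\tau .
\]
Step 1 also gives $S(\tau)\le e^{\lambda\delta}S(s+\delta)$ for $\tau\in[s,s+\delta]$. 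Together these convert between local averages of $S$ and its values slightly to the right.

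To reach $(J*S)(s)$ itself and not just its average over $[s,s+h]$, I would use the right inequality in integrated form,
\[
S(s)\ \le\ e^{-\mu h}S(s-h)+\frac{d_{\max}}{c}\int_{s-h}^{s}e^{-\mu(s-\tau)}(J*S)(\tau)\,d\tau,
\]
with $\mu$ chosen so that the $-d_iS$ and $KS$ terms are absorbed. The idea is to show that a large value of $J*S$ near $s$ forces $S$ to be large just to the right of $s$ (through the displayed integrated inequality of Step 3), while Step 1 prevents $S$ from collapsing.

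I would then close the argument with a bootstrap on $\Lambda:=\sup_s (J*S)(s)/S(s)$. First I would prove the bound assuming $\Lambda<\infty$, by playing the two integrated inequalities against each other. Then I would remove that assumption by working with truncated or weighted versions of $S$.

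\textbf{Main obstacle.} Step 3 is where the difficulty lies. $J$ is only continuous at $0$, with no decay or exponential-moment assumptions. Two simpler routes therefore fail: Step 1 bounds $S$ at points to the left of $s$ only by a factor $e^{\lambda y}$, which need not be integrable against $J$; and pointwise values of $J*S$ at nearby points cannot be compared directly. The route I would try first is the averaged one above. I would rely on the $L^1$-continuity of translations of $J$: since $\int|J(\tau-y)-J(s-y)|\,dy\to0$ as $\tau\to s$, I can compare $(J*S)(\tau)$ with $(J*S)(s)$ up to an error that must itself be controlled by local values of $S$. Making this error control uniform is the step I expect to require the most care.
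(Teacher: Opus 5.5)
Your Steps 1 and 2 are correct and match the paper's setup. The paper also shows that a weighted norm $w(s)=e^{ms}|p(s)|_1$ is increasing (your $e^{\lambda\tau}S(\tau)$), and it too reduces (a) to a uniform bound on $F(s)=(J*S)(s)/S(s)$. But Step 3, which carries the whole content of the theorem, is only a plan. The bootstrap on $\sup_s (J*S)(s)/S(s)$ assumes the finiteness you are trying to prove, and the device for removing that assumption (``truncated or weighted versions of $S$'') is never specified. The $L^1$-continuity route does not close either. Its error term $\int_{\mathbb{R}}|J(\tau-y)-J(s-y)|\,S(y)\,dy$ involves $S$ far to the left of $s$, where Step 1 only gives $S(y)\le e^{\lambda(s-y)}S(s)$. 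Without moment assumptions on $J$, this error is not controlled by local values of $S$, which is exactly the obstruction you point out. So as written there is a genuine gap.

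The missing idea is that your claim that nearby values of $J*S$ cannot be compared directly is false in one direction, and that direction is all you need. Since $J\ge 0$ and $\tau\mapsto e^{\lambda\tau}S(\tau)$ is nondecreasing,
\[
e^{\lambda\tau}(J*S)(\tau)=\int_{\mathbb{R}}J(y)e^{\lambda y}\,e^{\lambda(\tau-y)}S(\tau-y)\,dy
\]
is nondecreasing in $\tau$. Inserting this into your integrated inequality on $[s,s+h]$ gives the pointwise bound $(J*S)(s)\le \frac{c}{d_{\min}h}\,e^{\lambda h}S(s+h)$, with no integrability of $J(y)e^{\lambda|y|}$ required.

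It then suffices to have a forward doubling bound $S(s+\delta)\le C\,S(s)$. The paper gets it from the same integrated lower inequality, now on $[s-\delta,s]$, with $\delta$ chosen so that $J>0$ on $(-3\delta,3\delta)$. Keep only $y\in[-3\delta,-2\delta]$ and use monotonicity again: $e^{\lambda(\tau-y)}S(\tau-y)\ge e^{\lambda(s+\delta)}S(s+\delta)$. This gives $S(s+\delta)\le \frac{c}{d_{\min}\delta A_\delta}e^{-\lambda\delta}S(s)$ with $A_\delta=\int_{-3\delta}^{-2\delta}J(y)e^{\lambda y}\,dy>0$. Your own local-average estimate $(J*S)(\tau)\ge j_0\int_{\tau-\delta}^{\tau+\delta}S$ also yields such a doubling bound.

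Combining the two bounds with $h=\delta$ gives $F\le c^2/\big((d_{\min}\delta)^2A_\delta\big)$. The integrated form of the right inequality is never needed; the upper bound $P_2$ enters only through your Step 2.
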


Note that the constant $M$ is independent of the choices of $p(s)$ and of $r$.
Note also that these estimates hold for arbitrary $n\times n$ matrices $P_1$ and $P_2$, meaning that no structural assumptions, such as the irreducibility or cooperativity, are imposed on the coupling.
Furthermore, these estimates do not require the dispersal kernel $J$ to have compact support. They also do not need any specific tail decay, such as exponential decay, to establish the Harnack-type bounds.
With these estimates at hand, we can effectively address traveling-wave problems in non-cooperative systems with nonlocal dispersal. 

Wave propagation in reaction-diffusion systems with nonlocal dispersal has been extensively investigated for scalar equations
\cite{schumacher1980travelling,bates1997traveling,chen1997existence,carr2004uniqueness,coville2005propagation,coville2006uniqueness,coville2007non,coville2008existence,zhang2012spreading,shen2012traveling,Rawal2015Spreading,berestycki2016non,alfaro2023quantifying},
and monotone systems where comparison principles hold
\cite{liang2007asymptotic,fang2015bistable,zhang2018traveling,zhang2020propagation}. 
However, in many biological contexts--most notably multi-group disease transmission and predator-prey dynamics--the underlying systems are inherently non-cooperative
\cite{sherratt2016invasion, shu2019traveling,ducrot2019spreading,yang2020wave, zhou2020traveling,hao2021traveling,ai2023traveling,jiang2023wave,ducrot2025travelling,zhang2025spreading}, where the loss of monotonicity may significantly complicate the analysis. 
Specifically, the absence of a standard comparison principle for the full system invalidates classical monotone-iteration techniques.
More crucially, the nonexistence of a traveling wave below the critical speed cannot be established through the standard construction of lower barriers. 

To address these difficulties,
we consider a class of non-cooperative nonlocal diffusion systems with a network-coupled structure, motivated by multi-group disease transmission models.
The model consists of $n+1$ compartments, including the susceptible class $u$ and the $i$-th carrier (host) class $v_i$ ($i=1,\dots,n$), and is given by
\begin{align}\label{epidemicmodel}
\begin{cases}
    \dis\frac{\partial u}{\partial t}(x,t)=d_0\int_{\mathbb{R}}J(x-y)\big(u(y,t)-u(x,t)\big)dy+f(u)-g_0(u,v),\quad x\in\mathbb{R},\quad t>0\\[2ex]
    \dis\frac{\partial v_i}{\partial t}(x,t)=d_i\int_{\mathbb{R}}J(x-y)\big(v_i(y,t)-v_i(x,t)\big)dy+g_i(u,v),\quad x\in\mathbb{R},\quad t>0,\quad i=1,\dots,n,
\end{cases}
\end{align}
where $J$ is the dispersal kernel, $v=(v_1,\dots,v_n),$ and $f, g_0,g_i$ $(i=1,\dots,n)$ are given by
\begin{align}
&f(u)=\delta(K-u),\quad g_0(u,v)=\sum_{i=1}^ng_i^0(u,v),\quad g_i(u,v)=g_i^0(u,v)+\sum_{j=1}^nm_{ij}v_j,
\label{f}
\end{align}
with the incidence terms
\[
g_i^0(u,v)=u\sum_{j=1}^ng_{ij}^0(v_j).
\]
The function $g_i^0$ stands for the incidence term generating new infections in class $i$. Each $g_{ij}^0$ reflects that a susceptible host $u$ contacts an infected host $v_j$ of type $j$ and subsequently becomes an infected host $v_i$ of type $i$.
The coefficients $d_j~(j=0,1,\dots,n),\delta$ and $K$ are positive constants.
The transfer matrix $\mathbb{M}=\begin{bmatrix}
            m_{ij}\end{bmatrix}$ satisfies 
            $m_{ij}\geq 0$ for $i\neq j$ and $m_{ii}\leq 0$ for all $i=1,\dots,n.$ 
            Thus, $\mathbb{M}$ is an essentially nonnegative matrix.

In this paper, we assume that for each $i,j\in\{1,\dots,n\},$ the function $g_{ij}^0\in\mathcal{C}^2(\mathbb{R})$ satisfies (C1)--(C4):
\begin{enumerate}
    \item[(C1)]$g_{ij}^0(0)=0$ and $g_{ij}^0(\xi)\ge0$ for each $\xi\ge0$;
    \item[(C2)]$(g_{ij}^0)'(\xi)\ge 0$ for all $\xi\ge0$;
    \item[(C3)] $g_{ij}^0(\xi)> 0$ for all $\xi>0$ whenever $(g_{ij}^0)'(0)>0$;
    \item[(C4)] $(g_{ij}^0)''(\xi)\leq 0$ for all $\xi\ge0$. 
\end{enumerate}
The assumption (C1) reflects basic biological feasibility: 
no new infection is generated in the absence of infected hosts, and the 
incidence rate remains nonnegative; 
(C2) indicates that the infection risk monotonically increases with the infected population; 
(C3) ensures that whenever a specific infected group can spread the disease at zero infected density, it will always generate new cases whenever it is present; 
(C4) means that the marginal transmission contribution is nonincreasing as the infected population grows.

A typical example of $g_{ij}^0$ is
\begin{align}\label{g0ij}
    g_{ij}^0(v_j)=\frac{\beta_{ij}v_j}{1+\gamma_{ij}v_j}.
\end{align}
where $\beta_{ij},\gamma_{ij}$ are nonnegative constants.
In particular, by appropriately choosing the dimension $n$, the incidence functions $g_{ij}^0$, and the transfer matrix $\mathbb{M}$, the framework of \eqref{epidemicmodel} encompasses a large class of disease-transmission models, such as the endemic SIR model ($n=1$), the SEIR model ($n=2$), and the two-strain model with mutations ($n=2$), where $\mathbb{M}$ captures the latency rates or the mutation rates
(see, e.g., \cite{martcheva2015introduction}).



System \eqref{epidemicmodel} may be viewed as a nonlocal-diffusion extension of the network epidemic model studied in \cite{lam2018traveling}. Indeed, when the nonlocal dispersal operators are replaced by standard diffusion operators and $g_{ij}^0$ is chosen as in \eqref{g0ij}, system \eqref{epidemicmodel} reduces to the model considered there. However, this extension is not trivial since the nonlocal dispersal
operators do not provide the local elliptic regularity or the classical componentwise Harnack inequalities available for random
diffusion.

A feature of invasion systems is the emergence of a minimal wave speed that separates propagation from non-propagation. In the present setting, the linearization at the disease-free equilibrium yields a characteristic equation that determines a threshold speed $c^*$. Our next main result shows that this threshold is sharp.
For the existence of waves for supercritical speeds $c>c^*$, despite the non-cooperative nature of the full system \eqref{epidemicmodel}, we can adopt a truncated-problem approach to establish the existence of solutions on bounded intervals via Schauder's fixed point theorem, followed by a compactness argument to construct a traveling wave on the entire real line (see also \cite{berestycki2005quenching,berestycki2009non,ducrot2012qualitative,fu2015wave,girardin2017non, lam2018traveling,zhang2019propagation}).

However, establishing the nonexistence of traveling waves 
and the existence of a critical wave presents significant challenges due to the lack of monotonicity.
Under the random-diffusion setting, an important technique called the rescaling method \cite{berestycki2005analysis,ducrot2012qualitative,girardin2017non,lam2018traveling}, which is based on Harnack's inequalities, has been widely used.
For our coupled nonlocal diffusion system \eqref{epidemicmodel}, our newly established system-wide $\ell^1$-norm Harnack-type inequality enables this rescaling argument, allowing us to prove the nonexistence of traveling waves.
Furthermore, this estimate helps establish the persistence-extinction dichotomy (see Proposition \ref{lma:Persistence-Extinction}), which is crucial for proving the existence of a traveling wave at the critical speed.

Beyond existence, characterizing the asymptotic behavior at the leading edge is crucial for understanding the qualitative profile of these waves.
Typically, the decay rate at the leading edge of scalar nonlocal diffusion KPP equations can be obtained using the method of bilateral Laplace transforms, developed by Diekmann and Kaper 
\cite{diekmann1978bounded}, and Carr and Chmaj \cite{carr2004uniqueness} (see also \cite{coville2008nonlocal,guo2012traveling,girardin2018non}).
However, this approach faces significant difficulties for coupled systems, since the Laplace transform becomes vector-valued, and its analytic continuation and singularity analysis are considerably more involved. 
Thus, precise edge-asymptotic results for nonlocal coupled systems remain limited in the literature. 

Available rigorous asymptotic analyses on coupled systems in the random-diffusion setting were established in \cite{girardin2018non}, which is based on Ikehara's theorem and ODE arguments using componentwise Harnack's inequality (see \cite{arapostathis1999harnack}).
In our nonlocal diffusion model \eqref{epidemicmodel},  we first utilize our $\ell^1$-norm Harnack-type inequality to provide a concise proof of the 
exponential boundedness of waves.
Building on this foundation, 
we integrate Ikehara’s theorem with Riesz projections to perform the matrix-valued singularity analysis.
This approach (i) converts the singularity analysis into a scalar residue computation, and (ii) 
overcomes the inherent limitation of nonlocal operators, namely the inability to employ a componentwise Harnack's inequality.

The system-wide Harnack-type estimate and the associated rescaling method used here replies on the weakly coupled network structure rather than the particular form of the epidemic nonlinearity. Therefore, 
we 
may expect
that the analytical arguments developed in this work 
can be applied to a broader class of non-cooperative systems with 
network-coupled structures, 
including
nonlocal-diffusion counterparts of the random diffusion models investigated in \cite{girardin2017non, girardin2018non}.

\subsection{Notations}
\begin{enumerate}[1.]
    \item $\hat{\imath}:=\sqrt{-1},$ the imaginary unit (avoiding confusion with indices).
    \item $[n]:=\{1,2,\dots,n\},$ where $n\in\mathbb{N}.$
    \item $0_n:=$ the $n$-dimensional zero vector.
    \item $1_n:=$ the $n$-dimensional vector of all $1$'s.
    \item $I_n:=$ the $n\times n$ identity matrix.
    \item $P^\top:=$ the transpose of a matrix $P$.
    \item $P_{ij}:=$ the $(i,j)$-entry of a matrix $P$.
    \item For a square matrix $P$,
    $\Lambda_1(P):=\max\{ {\rm Re}~\lambda :\, \lambda\mbox{ is an eigenvalue of }P \}$ (the spectral bound).
   \item $v_i:=$ the $i$-th component of the vector $v=(v_1,\dots,v_n).$
    \item $|v|_1:=\sum_{i=1}^n|v_i|$, the $\ell^1$-norm of the vector $v=(v_1,\dots,v_n).$
    \item The left (right, resp.) positive Perron eigenvector of an essentially nonnegative irreducible matrix $P$ is the left (right, resp.) eigenvector of $P$ corresponding to the simple eigenvalue $\Lambda_1(P)$ whose $\ell^1$-norm is $1.$
    \item For two (row or column) vectors $v=(v_1,\dots,v_n)$ and $w=(w_1,\dots,w_n)$ in $\mathbb{R}^n,$ define $v\leq w$ ($v<w$, resp.) if $v_i\leq w_i$ ($v_i<w_i$, resp.) for each $i\in[n];$ $v$ is nonnegative (positive, resp.) if $v_i\geq 0$ ($v_i>0,$ resp.) for each $i\in[n].$
    \item A (row or column) vector function $v(s)=(v_1(s),\dots,v_n(s))$ is nonnegative (positive, resp.) on an interval $I$ if $v_i\geq 0$ ($v_i>0$, resp.) on $I$ for each $i\in[n].$
\item For $u\in\mathbb{R},v=(v_1,\dots,v_n)\in\mathbb{R}^n$ and functions $g_i(u,v),~i\in[n]$, denote
$g=(g_1,\dots,g_n)$ and its Jacobian matrix with respect to $v$ as $D_vg(u,v):=\begin{bmatrix}
         \frac{\partial g_i}{\partial v_j}(u,v)
     \end{bmatrix}.$
\end{enumerate}

\subsection{Existence and asymptotic behaviors of traveling waves}\label{sec:mainresult}
We formulate the traveling wave solutions of \eqref{epidemicmodel} as follows.

\begin{definition}
    A traveling wave solution of \eqref{epidemicmodel} is a bounded positive solution of the form
\begin{align*}
    (u,v)(x,t)=(U,V)(s)\mbox{ with }s=x+ct,
\end{align*}
where $c\in\mathbb{R}$ is the wave speed, satisfying the system
\begin{align}\label{epidemictws}
\begin{cases}
    cU'(s)=d_0\dint_{\mathbb{R}}J(s-s')(U(s')-U(s))ds'+f(U(s))-g_0(U(s),V(s)),\quad s\in\mathbb{R}, \\[2ex]
    cV_i'(s)=d_i\dint_{\mathbb{R}}J(s-s')(V_i(s')-V_i(s))ds'+g_i(U(s),V(s)),\quad s\in\mathbb{R},\quad i\in[n],
\end{cases}
\end{align}
together with the boundary conditions
\begin{align*}
    (U,V)(-\infty)=E_0,\quad\liminf\limits_{s\to\infty}U(s)>0,\quad\liminf\limits_{s\to\infty}V_i(s)>0,\quad i\in[n],
\end{align*}
where $V=(V_1,\dots,V_n)$, and $E_0=(K,0_n)$ is the disease-free equilibrium.
\end{definition}

To construct traveling waves, we further assume that the kernel is thin-tailed and symmetric:
\begin{enumerate}
\item[(J1)]$\int_{\mathbb{R}}J(s)e^{\lambda |s|}ds<\infty\mbox{ for all }\lambda\in[0,\infty)$; 
 \item[(J2)]$J(s)=J(-s)$ for all $s\in\mathbb{R}.$
\end{enumerate}
On the other hand, in some parts, we shall assume that the incidence terms satisfy
\begin{enumerate}
\item[(G1)] for each $i,j\in[n]$, $\sup\limits_{\xi\in[0,\infty)}g_{ij}^0(\xi)$ is finite;
    \item[(G2)] there is a pair $i_0,j_0\in[n]$ such that $(g_{i_0j_0}^0)'(0)>0$.
\end{enumerate}
The assumption the assumption (G1) means that if transmission is present, the incidence rate incorporates a saturation effect, while (G2) ensures that the disease is actively spreading through at least one host group.

Hereafter, we denote $D=\mathrm{diag}(d_1,\dots,d_n)~(d_1,\dots,d_n>0)$. For $\lambda,c\in\mathbb{R},$ denote
\begin{align}\label{defofmlambda}
    m(\lambda)=\int_{\mathbb{R}}J(s)e^{-\lambda s}ds-1,\quad H_{\lambda,c}=m(\lambda)D-c\lambda I_n.
\end{align}
Once (J0) and (J1) hold, we have $m(\lambda)$ is finite for all $\lambda\in\mathbb{R}$ and thus we can define 
\begin{align}\label{thm1c*}
            c^*:=\inf_{\lambda\in(0,\infty)}\frac{\Lambda_1\big(m(\lambda)D+D_vg(E_0)\big)}{\lambda},
        \end{align}
        where $E_0=(K,0_n)$ and $D_vg(E_0)=D_vg(K,0_n)=\begin{bmatrix}
            K(g_{ij}^0)'(0)+m_{ij}
        \end{bmatrix}$.
As we will see  (Lemma \ref{lmalambda1}), $c^*$ is a positive real number provided  $\Lambda_1(D_vg(E_0))>0.$

We first present the existence and nonexistence results for traveling waves of \eqref{epidemicmodel} as follows.

\begin{theorem}\label{thm1}
Let the assumptions {\rm (C1)--(C4)} hold.
    Assume that $J$ satisfies {\rm (J0)--(J2)}, and that $D_vg(E_0)$ is irreducible. 
    \begin{enumerate}[(a)]
    \item If $\Lambda_1(D_vg(E_0))<0,$ then \eqref{epidemicmodel} admits no traveling wave solution for any wave speed $c\in\mathbb{R}$.
    \item If $\Lambda_1(D_vg(E_0))>0$, then \eqref{epidemicmodel} admits no traveling wave solution for any wave speed $c<c^*$.
    \item If $\Lambda_1(D_vg(E_0))>0$, {\rm(G1)} holds, and $\Lambda_1(\mathbb{M})<0$, then \eqref{epidemicmodel} admits a traveling wave solution for any wave speed $c\geq c^*$, where $\mathbb{M}=\begin{bmatrix}
        m_{ij}
    \end{bmatrix}$.
    \end{enumerate}
\end{theorem}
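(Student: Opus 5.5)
The proof splits along the three parts. Each part rests on Theorem \ref{generalizedHarnack}, applied to the $V$-equation written in the linear form \eqref{linearinequality}. Since $g_i(U,V)=U\sum_j g^0_{ij}(V_j)+\sum_j m_{ij}V_j$ and $0\le g^0_{ij}(\xi)\le (g^0_{ij})'(0)\xi$ by (C1),(C2),(C4), any bounded wave satisfies
\[
\mathbb{M}V\le cV'-D\Big(\int_{\mathbb{R}} J(\cdot-s')V(s')ds'-V\Big)\le (\sup U)\,G'V+\mathbb{M}V,\qquad G'=[(g^0_{ij})'(0)].
\]
This is \eqref{linearinequality} with constant matrices $P_1,P_2$. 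If $c\neq 0$, Theorem \ref{generalizedHarnack} then gives $|V(\xi+s)|_1\le e^{M|s|}|V(\xi)|_1$.

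\textbf{Parts (a) and (b), nonexistence.} I will use the rescaling argument. Fix $c<c^*$ and suppose a wave exists; the case $c=0$ is handled separately by integrating or by using a weighted sum.
\begin{enumerate}
\item Because $V(-\infty)=0$ and $U(-\infty)=K$, the coefficient $U\,g^0_{ij}(V_j)/V_j$ tends to $K(g^0_{ij})'(0)$ as $s\to-\infty$.
\item Set $W_n(s)=V(s+s_n)/|V(s_n)|_1$ with $s_n\to-\infty$. Theorem \ref{generalizedHarnack}(b) bounds $|W_n|_1$ locally uniformly, and part (a) of that theorem together with the equation gives equicontinuity.
\item Extract a limit $W\ge0$ with $|W(0)|_1=1$. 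It solves the linear system $cW'=D(J*W-W)+D_vg(E_0)W$ on all of $\mathbb{R}$.
\item Irreducibility of $D_vg(E_0)$ together with the strong maximum principle makes $W$ positive. Harnack also shows $|W(s)|_1\le e^{M|s|}$.
\item Take a bilateral Laplace transform, or use a Liouville/Perron-type argument. A positive entire solution of this linear system forces the existence of $\lambda>0$ with $\Lambda_1(m(\lambda)D+D_vg(E_0))=c\lambda$.
\item Such a $\lambda$ contradicts $c<c^*$ by the definition \eqref{thm1c*} (see Lemma \ref{lmalambda1}). When $\Lambda_1(D_vg(E_0))<0$ it also contradicts the convexity of $\lambda\mapsto\Lambda_1(m(\lambda)D+D_vg(E_0))$ for every $c$, since this function is negative at $0$.
\end{enumerate}
I expect step 5 to be the main obstacle, because the system lacks monotonicity. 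My plan is to pair $W$ with the left Perron eigenvector $\phi_\lambda$ and study $w=\phi_\lambda\cdot W$, which reduces the problem to a scalar sign argument.

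\textbf{Part (c), existence for $c>c^*$.} Build sub- and super-solutions from the Perron eigenvector at the smaller root $\lambda_1$ of $\Lambda_1(\cdot)=c\lambda$. These are $\overline V=e^{\lambda_1 s}\phi$ and $\overline U=K$, together with $\underline V=\max\{e^{\lambda_1 s}\phi-qe^{(\lambda_1+\epsilon)s}\phi,0\}$ and $\underline U=\max\{K-\sigma e^{\epsilon s},0\}$.
\begin{enumerate}
\item On $[-a,a]$, solve a truncated problem with Schauder's fixed point theorem.
\item Let $a\to\infty$; compactness comes from (J1) and the equation.
\item The assumptions $\Lambda_1(\mathbb{M})<0$ and (G1) supply a uniform bound on $V$ and keep $U$ bounded below.
\item Prove $\liminf_{s\to\infty}$ positivity through the persistence–extinction dichotomy (Proposition \ref{lma:Persistence-Extinction}), again via Harnack.
\end{enumerate}

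\textbf{Part (c), existence for $c=c^*$.} Take $c_k\downarrow c^*$ and translate each wave so that $|V_k(0)|_1=\eta$ is small. Harnack and the uniform bounds give a convergent subsequence with limit $(U,V)$. The dichotomy, together with the normalization, keeps the limit from being trivial and yields the correct boundary behavior at $\pm\infty$.
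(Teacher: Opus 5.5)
\textbf{There are genuine gaps.} The proposal fails outright in part (a), leaves negative speeds in (b) unjustified, gives no workable mechanism for its own Step~5, and misses a non-collapse step in the critical case.

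\textbf{Part (a) and negative speeds in (b).} Your argument for part (a) does not work. When $\Lambda_1(D_vg(E_0))<0$, the function $\mu(\lambda)=\Lambda_1(m(\lambda)D+D_vg(E_0))$ is strictly convex, negative at $\lambda=0$, and superlinear as $|\lambda|\to\infty$. So for \emph{every} $c$ the equation $\mu(\lambda)=c\lambda$ has exactly one negative and one positive root (Lemma~\ref{lmalambda1}(a)); negativity at $0$ is precisely what produces the positive root. The exponential mode you obtain by rescaling at $-\infty$ therefore contradicts nothing: the obstruction in this regime is not visible at $-\infty$, and your rescaling does not cover $c=0$ anyway. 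The paper argues globally instead. Since $U\le K$ (Lemma~\ref{lmapositive}(a)) and (C2), (C4) give $g(U,V)\le D_vg(E_0)V$, it compares $v(x,t)=V(x+ct)$ with $\tau e^{\Lambda t}\eta$ via the weak maximum principle (Proposition~\ref{maxprinciple}); here $\Lambda=\Lambda_1(D_vg(E_0))$ and $\eta$ is the right Perron vector. This gives $\sup_s V_i(s)=\sup_x v_i(x,t)\le\tau e^{\Lambda t}\eta_i$ for all $t>0$, hence $V\equiv0_n$, for every $c\in\mathbb{R}$.

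A related sign problem affects (b) at negative speeds. Your Step~5 asserts $\lambda>0$, but a positive entire solution of the linearized system need not have a positive exponent. For $c\le-c^*$, $e^{\underline\lambda s}\psi$ with $\underline\lambda<0$ is one (Lemma~\ref{lmalambda1}(b)(i),(ii)), so the limit $W$ alone yields no contradiction. You can fix this in either of two ways:
\begin{enumerate}
\item Extend your ``$c=0$'' integration to all $c\le0$. This is Proposition~\ref{lmacpositive}: integrate the $V$-equation against the left Perron vector of $D_vg(E_0)$ over $(-\infty,s^*]$, and use (J2) to see that the nonlocal contribution is positive, which forces $c>0$.
\item Extract the exponent from $V$ itself. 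For $c<0$ take $\lambda=\max_i\limsup_{s\to-\infty}V_i'/V_i$. If $\lambda<0$, every $V_i$ would be decreasing near $-\infty$, contradicting $V(-\infty)=0_n$. Otherwise, rescaling along a sequence realizing $\lambda\ge0$ gives $\mu(\lambda)=c\lambda\le0<\Lambda_1(D_vg(E_0))\le\mu(\lambda)$, which is absurd.
\end{enumerate}

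\textbf{Step 5 for $0<c<c^*$.} Here your outline is the paper's (Lemma~\ref{nonlinearexp} plus Lemma~\ref{lmalambda1}(b)(iii)), but the tools you suggest for Step~5 do not close.
\begin{itemize}
\item Harnack only gives $|W(s)|_1\le e^{M|s|}$, so the bilateral Laplace transform of $W$ need not converge anywhere; for $W=e^{\lambda s}\psi$ it never does.
\item $w=\phi_\lambda^\top W$ satisfies the closed equation $cw'=\mu(\lambda)w$ only when $W$ is already of the form $e^{\lambda s}\psi$. Otherwise $\phi_\lambda^\top D(J*W-W)$ is not controlled by $w$, and you do not yet know which $\lambda$ to use.
\end{itemize}
The missing device is extremal. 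For $c>0$ set $\lambda=\min_i\liminf_{s\to-\infty}V_i'/V_i$. It is $>-\infty$ because $c>0$ and $\mathbb{M}$ is essentially nonnegative. It is $<\infty$ by Theorem~\ref{generalizedHarnack}(a), since if every $V_i'/V_i$ exceeded $M$, so would $\sum_iV_i'/\sum_iV_i$. Rescale along a sequence realizing $\lambda$. The limit satisfies $(p^\infty)'-\lambda p^\infty\ge0_n$ with one component vanishing at $0$. This difference solves the same irreducible linear system, so Lemma~\ref{SMPlinearized} forces $p^\infty=e^{\lambda s}\psi$.

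\textbf{The critical case $c=c^*$.} Fixing $|V_{c_k}(0)|_1=\eta$ presupposes $\sup_s|V_{c_k}(s)|_1\ge\eta$ for all $k$, i.e.\ that the waves do not collapse uniformly onto $E_0$ as $c_k\downarrow c^*$. Nothing in your sketch prevents this. It is the content of Lemma~\ref{limsuppositive}: collapsing waves are shown to be eventually monotone (Harnack rescaling plus Proposition~\ref{lma:increasing}). Then $V_{c_k}(\infty)/|V_{c_k}(\infty)|_1$ tends to a nonnegative null vector of $D_vg(E_0)$, contradicting $\Lambda_1(D_vg(E_0))>0$. Collapse of $V$ alone already forces collapse of $K-U$, so this is the statement you need. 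Granting it, your normalization is fine, since $|V(0)|_1=\eta<\eta_{c^*}$ excludes alternative (ii) of Proposition~\ref{lma:Persistence-Extinction}.

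\textbf{Persistence at $+\infty$.} The bound $\liminf_{s\to+\infty}V_i>0$ does not come from the dichotomy, which only concerns $-\infty$. It comes from Proposition~\ref{prop:persist}: rescaling at points where $V_{i_0}\to0$ and $V_{i_0}'\le0$ produces a positive solution of the linearized system whose $i_0$-th component has nonpositive derivative at $0$. This contradicts Proposition~\ref{lma:increasing} for $c\ge c^*$.
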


This $c^*$ is referred to as the minimal wave speed for \eqref{epidemicmodel}. 
In part (c), 
the assumption $\Lambda_1(\mathbb{M})<0$
means that the disease will go extinct in the absence of susceptibles ($u\equiv 0$) \cite{lam2018traveling}. 
This assumption, together with (G1), avoids unbounded growth in infected populations, even in the presence of possible long-distance dispersal (i.e., $J$ needs not be compactly supported).
Furthermore, the assumptions $\Lambda_1(D_vg(E_0))>0$ and $\Lambda_1(\mathbb{M})<0$ together imply (G2).
Under {\rm (C1)}, {\rm (C2)}, and
{\rm (C4)}, failure of {\rm (G1)} may imply
$g_{ij}^0\equiv0$ for all $i,j\in[n]$. In that case, no new infections are generated, and a propagation wave of the disease cannot be expected.

Next, we investigate the asymptotic behavior of $(U,V)(s)$ as $s\to-\infty$. 
As we will see (Lemma \ref{lmalambda1}), $\Lambda_1(H_{\lambda,c}+D_vg(E_0))=0$ has a smallest positive root $\lambda=\lambda_c$ whenever $c\geq c^*$. 
On the other hand, it is easy to see that
\begin{align*}
\Delta_c(\lambda):=c\lambda-d_0m(\lambda)+\delta=0
\end{align*}
has a unique positive simple root $\lambda=\nu_c$ for any $c\in\mathbb{R}$, where $\delta>0$ is defined in \eqref{f}.

\begin{theorem}\label{thm2}
Let the assumptions {\rm (C1)--(C4)} and {\rm (G2)} hold.
    Assume that  $J$ satisfies {\rm (J0)--(J2)}, and that
    $D_vg(E_0)$ is irreducible with $\Lambda_1(D_vg(E_0))>0$. 
    Let $c\geq c^*$ and set
    \begin{align*}
    p_c:=\begin{cases}
        0&\quad\mbox{ if }c>c^*;\\
        1&\quad\mbox{ if }c=c^*,
    \end{cases}\quad\mbox{and}\quad
        \mu_c:=\begin{cases}
            p_c&\quad \mbox{ if }\nu_c>\lambda_c;\\
            p_c+1&\quad \mbox{ if }\nu_c=\lambda_c;\\
            0&\quad \mbox{ if }\nu_c<\lambda_c.
        \end{cases}
    \end{align*}
   Suppose that  $(U,V)$ is a bounded 
   positive solution of \eqref{epidemictws} on $\mathbb{R}$
   satisfying $(U,V)(-\infty)=E_0.$ 
   Then the following asymptotic behaviors of $(U,V)$ at $-\infty$ hold.
   \begin{enumerate}[(a)]
       \item There exists a constant $A_c>0$ such that 
   \begin{align*}
       \lim_{s\to-\infty}\frac{V(s)}{|s|^{p_c}e^{\lambda_c s}}=A_c\psi_c,
   \end{align*}
   where 
   $\psi_c$ is the right positive Perron eigenvector of $H_{\lambda_c,c}+D_vg(E_0)$ associated with zero eigenvalue.
   \item There exists a constant 
   $B_c>0$ such that 
   \begin{align*}
       \lim_{s\to-\infty}\frac{K-U(s)}{|s|^{\mu_c}e^{\min\{\lambda_c,\nu_c\}s}}=B_c.
   \end{align*}
   \end{enumerate}
\end{theorem}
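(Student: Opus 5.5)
The plan is to follow the scalar route of Diekmann--Kaper and Carr--Chmaj: exponential bounds first, then a vector-valued Laplace transform whose first singularity is resolved by a Riesz projection and Ikehara's theorem.

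\emph{Step 1: exponential bounds on the leading edge.} Since $U\to K$ as $s\to-\infty$, the $V$-equation of \eqref{epidemictws} is a linear inequality of the form \eqref{linearinequality}. We use $g_{ij}^0(\xi)\le (g_{ij}^0)'(0)\xi$ from (C4) and $(g_{ij}^0)'(\xi)\geq0$ from (C2). Theorem~\ref{generalizedHarnack}(a) then gives $|\sum_i V_i'|\le M|V|_1$. Next set $W(s)=\int_{-\infty}^s |V(\tau)|_1d\tau$ and integrate the summed $V$-equation from $-\infty$ to $s$; for this, $V\in L^1(-\infty,s)$ comes from a standard sub-solution argument. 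Near $-\infty$, $D_vg(U,V)$ is close to $D_vg(E_0)$, which has $\Lambda_1>0$. Pairing with the left Perron eigenvector, we derive $W(s)\le C|V(s)|_1$ near $-\infty$, using Theorem~\ref{generalizedHarnack}(b) to compare $V$ at nearby points inside the convolution. This gives $V(s)=O(e^{\epsilon s})$ for some $\epsilon>0$. The $U$-equation is handled the same way: the equation for $K-U$ is linear with source $g_0=O(|V|_1)$ and decay $\delta$, so $K-U=O(e^{\epsilon' s})$.

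\emph{Step 2: Laplace transforms.} For $0<\mathrm{Re}\,\lambda<\epsilon$ define $\mathcal{L}V(\lambda)=\int_{\mathbb{R}}e^{-\lambda s}V(s)\,ds$, after splitting off the bounded part at $+\infty$ (or multiplying by a cutoff). Write the $V$-system as
\[
cV'-D(J*V-V)-D_vg(E_0)V=-R(s),
\]
where $R=O(|V|_1^2+(K-U)|V|_1)$ decays at a strictly faster exponential rate. Transforming gives
\[
\big(H_{\lambda,c}+D_vg(E_0)\big)\mathcal{L}V(\lambda)=\mathcal{L}R(\lambda)+(\text{entire terms from the cutoff}).
\]
The left side is analytic as long as $\mathcal{L}V$ is. Since $V\ge0$, the componentwise version of Widder's theorem says each $\mathcal{L}V_i$ has a real singularity at its abscissa of convergence. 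A bootstrap then shows that the abscissa of $|V|_1$ equals the smallest positive $\lambda$ at which $H_{\lambda,c}+D_vg(E_0)$ is singular with a positive null vector, namely $\lambda_c$; Lemma~\ref{lmalambda1} guarantees $\Lambda_1<0$ on $(0,\lambda_c)$. Each time the abscissa is not yet $\lambda_c$, invertibility extends $\mathcal{L}V$ analytically, which is a contradiction. Boundedness of $V$ again uses Theorem~\ref{generalizedHarnack}.

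\emph{Step 3: singularity analysis via the Riesz projection.} Let $\Pi(\lambda)$ be the Riesz projection of $H_{\lambda,c}+D_vg(E_0)$ onto the Perron eigenvalue $\Lambda(\lambda):=\Lambda_1(H_{\lambda,c}+D_vg(E_0))$, which is simple by irreducibility, near $\lambda_c$. On the range of $I-\Pi$ the resolvent is analytic. On the range of $\Pi$,
\[
\mathcal{L}V=\Lambda(\lambda)^{-1}\Pi(\lambda)\big(\mathcal{L}R+\dots\big).
\]
If $c>c^*$, then $\Lambda'(\lambda_c)\neq0$ and the pole is simple. If $c=c^*$, $\lambda_c$ is a double root and the pole is double; this is what produces $|s|^{p_c}$. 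We must also rule out other singularities on the line $\mathrm{Re}\,\lambda=\lambda_c$. For this we check that $\det(H_{\lambda,c}+D_vg(E_0))\neq0$ at $\lambda_c+\hat{\imath}\tau$ with $\tau\neq0$: symmetry (J2) gives $|m(\lambda_c+\hat{\imath}\tau)+1|<m(\lambda_c)+1$, and comparison with the Perron eigenvalue does the rest. Ikehara's theorem, applied to $\langle \phi_c,V\rangle$ where $\phi_c$ is the left Perron vector, then yields $\langle\phi_c,V(s)\rangle\sim A|s|^{p_c}e^{\lambda_c s}$. The direction $\psi_c$ follows because the residue lies in the range of $\Pi(\lambda_c)$. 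Applying Ikehara to each component $V_i$ gives the full vector limit, and $A_c>0$.

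\emph{Step 4: the $U$-component.} The equation for $K-U$ transforms to $\Delta_c(\lambda)\mathcal{L}(K-U)=\mathcal{L}g_0+\dots$. The term $\mathcal{L}g_0$ inherits from Step 3 a pole at $\lambda_c$ of order $p_c+1$, with positive residue, while $\Delta_c$ has a simple zero at $\nu_c$. The three cases of $\mu_c$ follow by comparing $\nu_c$ with $\lambda_c$; when they coincide, the orders add. Ikehara's theorem then gives $B_c>0$.

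The main obstacle is Step 3 in the critical case $c=c^*$. There we must show that the double pole is correctly captured by the projected scalar function and that Ikehara's theorem applies with the power $|s|^{p_c}$. This requires analytic dependence of $\Pi$ and $\Lambda$ and a positive leading coefficient, which I would obtain from the positivity of $\phi_c$, $\psi_c$ and of $\mathcal{L}R$ at $\lambda_c$.
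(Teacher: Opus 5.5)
Your Step~1 (integrability, then Theorem~\ref{generalizedHarnack}(b) to get $W\le C|V|_1$ and hence some small rate $\epsilon$) is sound. Steps~3--4 essentially coincide with the paper's Riesz-projection argument and its case analysis.

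\textbf{The gap.} It is the bootstrap in Step~2 that lifts the rate from $\epsilon$ to $\lambda_c$. You justify it by claiming that Lemma~\ref{lmalambda1} gives $\Lambda_1<0$ on $(0,\lambda_c)$, so that $Q_c(\lambda):=H_{\lambda,c}+D_vg(E_0)$ is invertible there. This fails for two reasons.
\begin{itemize}
\item The sign is wrong. $\Lambda_1(Q_c(\lambda))=\mu(\lambda)-c\lambda$ equals $\Lambda_1(D_vg(E_0))>0$ at $\lambda=0$. It stays positive on $[0,\lambda_c)$ because $\lambda_c$ is its first positive zero. Part (c) of the lemma gives negativity only between the two roots.
\item A positive Perron eigenvalue does not give invertibility anyway. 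Take $D=I_2$ and $D_vg(E_0)=\begin{bmatrix}2&1\\1&2\end{bmatrix}$. The eigenvalues of $Q_c(\lambda)$ are $m(\lambda)-c\lambda+3$ and $m(\lambda)-c\lambda+1$. Since $m(\lambda)-c\lambda$ runs continuously from $0$ to $-3$ on $[0,\lambda_c]$, the second eigenvalue vanishes at some real $\lambda_2\in(0,\lambda_c)$.
\end{itemize}
If the abscissa of convergence $\sigma$ of $\mathcal{L}_-V$ sits at such a point, $Q_c^{-1}$ has a pole at $\sigma$. Then ``invertibility extends $\mathcal{L}V$'' yields no contradiction, so as written the upgrade to $\lambda_c$ does not go through.

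\textbf{The missing idea.} You need a positivity argument at the real abscissa. Your phrase ``singular with a positive null vector'' points to it, but the argument you actually give uses invertibility instead. Near $\sigma<\lambda_c$, $\mathcal{L}_-V$ agrees with the meromorphic function $Q_c^{-1}\mathcal{L}R-\mathcal{L}_+V$ (plus cutoff terms); $\mathcal{L}R$ is analytic slightly past $\sigma$ by your bounds. Moreover, $\mathcal{L}_-V(\lambda)\ge 0_n$ for real $\lambda<\sigma$. There are two cases.
\begin{itemize}
\item If this function has a pole at $\sigma$, then $|\mathcal{L}_-V(\lambda)|_1\to\infty$ as $\lambda\uparrow\sigma$. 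Normalizing yields a nonnegative unit null vector of $Q_c(\sigma)$. By Perron--Frobenius this forces $\Lambda_1(Q_c(\sigma))=0$, which is impossible for $\sigma<\lambda_c$.
\item Otherwise the function is analytic at $\sigma$, which contradicts the Pringsheim--Widder singularity.
\end{itemize}

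\textbf{How the paper avoids this.} In Lemma~\ref{lem:for-Laplace} it reuses the rescaling argument of Lemma~\ref{nonlinearexp}. That argument shows $\lambda_*=\min_i\liminf_{s\to-\infty}V_i'/V_i$ is itself a root of $\Lambda_1(H_{\lambda,c}+D_vg(E_0))=0$. Hence $\lambda_*\ge\lambda_c$, and $|V(s)|_1=O(e^{(\lambda_c-\varepsilon)s})$ follows directly. Then $K-U=O(e^{\mu_0 s})$ for every $\mu_0<\min\{\lambda_c,\nu_c\}$ follows by comparison with $Ae^{\mu_0 s}$. Your route would also need a second bootstrap for $K-U$ before Step~4; it is scalar and easier, since $\Delta_c>0$ on $(0,\nu_c)$.

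\textbf{Two smaller omissions.}
\begin{itemize}
\item Ikehara's theorem requires a monotone function, and $V_i$ and $K-U$ need not be monotone near $-\infty$. Apply it instead to $e^{\rho s}V$ with $\rho>\max_i(d_i+|m_{ii}|)/c$, as the paper does; this only shifts the strip.
\item $\mathcal{L}R(\lambda_c)\neq 0_n$ requires $R\not\equiv 0_n$, and this uses (G2) together with $U\not\equiv K$.
\end{itemize}
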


\medskip

The rest of this paper is organized as follows.  
In Section \ref{Preparatory lemmas}, we establish linear theory for coupled nonlocal diffusion systems, including maximum principles for cooperative systems and $\ell^1$-norm Harnack-type estimates. In particular, we prove Theorem \ref{generalizedHarnack}.
As an application, we prove Theorem \ref{thm1} in Section \ref{sec:existence and non}.
 The convergence rate at the edge is studied in Section \ref{sectionconvergencerate}, and hence Theorem \ref{thm2} is proved. 

\section{Linear theory and system-wide Harnack estimates}\label{Preparatory lemmas}

In this section, we establish four important results in linear nonlocal diffusion systems, namely the maximum principles for cooperative systems, the $\ell^1$-norm Harnack-type inequality, the existence of exponential solutions, and the characterization of the minimal wave speed. The results in this section are independent of other sections.

\subsection{Maximum principles for cooperative systems}
In this subsection, fix $N\in\mathbb{N}$. We assume that $J:\mathbb{R}^N\to\mathbb{R}$ is a nonnegative Lebesgue measurable function such that
$$\int_{\mathbb{R}^N}J(x)dx=1.$$
Here we do not require (J0)--(J2).
Let $\Omega$ be a domain in $\mathbb{R}^N$, possibly bounded or unbounded.
Fix a $T>0$ and denote $$\Omega_T:=\Omega\times(0,T]\mbox{ and }\overline{\Omega_T}:=\overline{\Omega}\times[0,T],$$
where $\overline{\Omega}$ is the closure of $\Omega$ in $\mathbb{R}^N.$
    Assume that 
   \begin{itemize}
        \item[(M)] $d_i,c_{ij}\in\mathcal{C}(\overline{\Omega_T})\cap L^\infty(\overline{\Omega_T})$ 
        with $d_i\geq0\mbox{ and }c_{ij}\geq0$ for $i\neq j$.
    \end{itemize}

\begin{proposition}[Weak Maximum Principle]\label{maxprinciple}
    Assume {\rm(M)}. Suppose that  for each $i\in[n],$
    $u_i\in \mathcal{C}([0,T];L^\infty(\Omega))$, 
      $u_i(\cdot,t)\in \mathcal{C}(\Omega)$ for each $t\in[0,T]$
      and $u_i(x,\cdot)\in \mathcal{C}^1((0,T])$ for each $x\in\Omega$, 
      and satisfies
    \begin{align*}
         u_i(x,0)&\geq 0\mbox{ for } x\in\Omega;\notag\\
         \frac{\partial u_i}{\partial t}(x,t)&\geq d_i(x,t)\int_{\Omega}J(x-y)u_i(y,t)dy-d_i(x,t)u_i(x,t)+\sum_{j=1}^nc_{ij}(x,t)u_j(x,t)\mbox{ for }(x,t)\in\Omega_T.
    \end{align*}
    Then $u_i\geq 0$ on $\Omega_T$ for each $i\in[n].$
\end{proposition}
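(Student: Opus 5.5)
The plan is a Gronwall argument on the sup-norm of the negative parts. A direct ``first negative minimum'' argument is not available, because $\Omega$ may be unbounded, the infimum of $u_i$ need not be attained, and $u_i$ is only continuous in time with values in $L^\infty(\Omega)$. So I set
\[
\varphi(t):=\max_{i\in[n]}\big\|u_i^-(\cdot,t)\big\|_{L^\infty(\Omega)},\qquad u_i^-:=\max\{-u_i,0\},
\]
and aim to show $\varphi(t)\le C\int_0^t\varphi(\tau)\,d\tau$. Gronwall's inequality then gives $\varphi\equiv0$ on $[0,T]$.

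Two preliminary observations are needed.

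\emph{Continuity of $\varphi$.} The map $w\mapsto w^-$ is $1$-Lipschitz on $L^\infty(\Omega)$ and $u_i\in\mathcal{C}([0,T];L^\infty(\Omega))$, so $\varphi$ is continuous on $[0,T]$. Moreover $\varphi(0)=0$ by the initial condition.

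\emph{Sup versus ess sup.} Since $u_i(\cdot,t)\in\mathcal{C}(\Omega)$ and $\Omega$ is open, the essential supremum of $u_i^-(\cdot,t)$ equals its pointwise supremum. Hence $u_i(x,t)\ge-\varphi(t)$ for every $x\in\Omega$.

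Next I derive the pointwise differential inequality. Let $L$ bound all $|d_i|$ and $|c_{ij}|$ on $\overline{\Omega_T}$, which is possible by (M). Set $a_i(x,t):=d_i(x,t)-c_{ii}(x,t)$, so $|a_i|\le 2L$. Using $J\ge0$ and $\int_\Omega J(x-y)\,dy\le1$, I get $d_i\int_\Omega J(x-y)u_i(y,t)\,dy\ge -d_i\varphi(t)$. Using $c_{ij}\ge0$ for $i\ne j$, I get $c_{ij}u_j\ge -c_{ij}\varphi(t)$. The differential inequality therefore gives, for each fixed $x\in\Omega$ and $t\in(0,T]$,
\[
\partial_t u_i(x,t)+a_i(x,t)u_i(x,t)\ \ge\ -nL\,\varphi(t).
\]

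I then integrate with an integrating factor. With $A_i(x,t):=\int_\varepsilon^t a_i(x,\tau)\,d\tau$, this reads
\[
\partial_t\big(e^{A_i}u_i\big)\ge -nL\,e^{A_i}\varphi .
\]
Integrating from $\varepsilon$ to $t$ uses only the $\mathcal{C}^1((0,T])$ regularity in $t$. Since $|A_i|\le 2LT$, this yields
\[
u_i(x,t)\ \ge\ -e^{4LT}\varphi(\varepsilon)-nLe^{4LT}\int_\varepsilon^t\varphi(\tau)\,d\tau
\]
for all $x\in\Omega$. Taking the supremum over $x$ and the maximum over $i$ gives $\varphi(t)\le C\varphi(\varepsilon)+C\int_\varepsilon^t\varphi$. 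Letting $\varepsilon\to0$ and using continuity of $\varphi$ with $\varphi(0)=0$ gives $\varphi(t)\le C\int_0^t\varphi$. Gronwall then yields $\varphi\equiv0$, that is, $u_i\ge0$ on $\Omega_T$.

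The main obstacle is the lack of compactness and of pointwise continuity at $t=0$. The sup-norm functional $\varphi$ replaces a pointwise minimum argument. Starting the integration at $\varepsilon>0$ and passing to the limit through the $L^\infty$-continuity of $\varphi$, rather than through pointwise continuity, avoids the fact that $u_i(x,\cdot)$ is only known to be $\mathcal{C}^1$ on $(0,T]$.
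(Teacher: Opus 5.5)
Your proof is correct, but it takes a different route from the paper's.

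\textbf{The paper's route.} The paper perturbs to $v_i=u_i+\varepsilon e^{K_0t}$, so that $v_i(\cdot,0)\ge\varepsilon$ and the inequality becomes strict. It then uses the continuity of $m(t)=\min_i\inf_x v_i(x,t)$ to find a first time $t_0$ with $m(t_0)=0$ and $v\ge 0$ on $[0,t_0]$. Because $v\ge0$ there, it simply discards the nonlocal term and the off-diagonal coupling. This leaves, at each fixed $x$, the scalar inequality $\partial_t v_i+(d_i-c_{ii})v_i>0$. That inequality gives $v_i(x,t_0)\ge \varepsilon e^{-C_0T}$ uniformly in $x$, which contradicts $m(t_0)=0$. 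Finally $\varepsilon\to0$.

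\textbf{Your route.} You instead bound the nonlocal and coupling terms from below by $-\varphi(t)$ and close with Gronwall. There is no perturbation, no first-touching time, and no contradiction.

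\textbf{Comparison.} Both proofs rest on the same two facts: $L^\infty$-continuity in $t$, and $\sup=\operatorname{ess\,sup}$ for continuous functions on the open set $\Omega$. Your version is quantitative. Without assuming $\varphi(0)=0$, the same computation gives $\varphi(t)\le C\varphi(0)+C\int_0^t\varphi$, hence $\varphi(t)\le Ce^{Ct}\varphi(0)$. That is an $L^\infty$ stability estimate for the negative parts. The paper's argument is the classical first-touching maximum principle. It only ever uses the sign of the nonlocal and coupling terms once $v\ge0$, never a quantitative lower bound for them.

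\textbf{A small correction.} The obstacle you cite at $t=0$ is not actually present. Apply your own observation $\sup=\operatorname{ess\,sup}$ to $u_i(\cdot,t)-u_i(\cdot,0)$. This shows $u_i(\cdot,t)\to u_i(\cdot,0)$ uniformly on $\Omega$, so each $u_i(x,\cdot)$ is continuous at $t=0$. The paper uses this tacitly when it integrates from $0$. Starting your integration at $\varepsilon$ and letting $\varepsilon\to0$ is harmless, just unnecessary.
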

\begin{proof}
First of all, since $J\in L^1(\mathbb{R}^N)$ and $u_i(\cdot,t)\in L^\infty(\Omega)$ for each $i\in[n]$ and each $t\in[0,T],$ the integral $\int_{\Omega}J(x-y)u_i(y,t)dy$ is well-defined for each $t\in[0,T].$

Let 
\begin{align}\label{K0}
K_0:=\max\left\{\sum_{j=1}^n\sup_{\overline{\Omega_T}}|c_{ij}|:i\in[n]\right\}+1.
\end{align}
    Also,  
    for a fixed $\varepsilon>0,$ let
    $$v_i(x,t)=u_i(x,t)+\varepsilon e^{K_0t}\mbox{ for }i\in[n]\mbox{ and }(x,t)\in\overline{\Omega_T}.$$
     Then for each $i\in[n]$ and $(x,t)\in\Omega_T,$ we have
    \begin{align}\label{ineqofv}
        &d_i(x,t)\int_{\Omega}J(x-y)v_i(y,t)dy-d_i(x,t)v_i(x,t)+\sum_{j=1}^nc_{ij}(x,t)v_j(x,t)\notag\\
        =&d_i(x,t)\int_{\Omega}J(x-y)u_i(y,t)dy-d_i(x,t)u_i(x,t)+\sum_{j=1}^nc_{ij}(x,t)u_j(x,t)\notag\\
        &+d_i(x,t)\int_{\Omega}J(x-y)\varepsilon e^{K_0t}dy-d_i(x,t)\varepsilon e^{K_0t}+\sum_{j=1}^nc_{ij}(x,t)\varepsilon e^{K_0t}\notag\\
        <&\frac{\partial u_i}{\partial t}(x,t)+K_0\varepsilon e^{K_0t}=\frac{\partial v_i}{\partial t}(x,t),
    \end{align}
    where the inequality holds since
    we used $\int_{\Omega}J(x-y)dy\leq\int_{\mathbb{R}^N}J(x-y)dy=\int_{\mathbb{R}^N}J(y)dy=1$ and \eqref{K0}. 
    Also, 
    \begin{align}\label{vkgeqeps}
        v_i(x,0)\geq \varepsilon>0\mbox{ for all }i\in[n]\mbox{ and }x\in\Omega.
    \end{align}
    To reach a contradiction, we suppose that $$\inf\{v_i(x,t):i\in[n],(x,t)\in\Omega_T\}<0.$$ 
    Define
    \[
m(t):=\min_{i\in[n]}\inf_{x\in\Omega}v_i(x,t),\quad t\in[0,T].
\]
Since $v_i\in \mathcal{C}([0,T];L^\infty(\Omega))$ for each $i\in[n]$, $m$ is continuous on $[0,T]$. Moreover, by \eqref{vkgeqeps},
$m(0)\geq\varepsilon>0$. Hence, there exist $t_0\in(0,T]$ and $i_0\in[n]$ such that
$v_i(x,t)\geq0$ for all $i\in[n]$, $x\in\Omega$ and $t\in[0,t_0]$,
and 
\begin{align}\label{inf>0}
   \inf_{x\in\Omega}v_{i_0}(x,t_0)=0.
\end{align}

    Thanks to (M), \eqref{ineqofv} gives
    \begin{align*}
        \frac{\partial v_i}{\partial t}(x,t)+(d_i(x,t)-c_{ii}(x,t))v_i(x,t)>0\mbox{ for all }t\in(0,t_0],\, x\in\Omega, \mbox{ and }i\in[n].
    \end{align*}
    This implies that the function 
    $$t \mapsto v_i(x,t)\exp\left(\int_0^t(d_i(x,\tau)-c_{ii}(x,\tau))d\tau\right)$$ is  
    strictly increasing
    in $t\in[0,t_0]$ for each $i\in[n].$ 
    Then, using $v_i(x,0)\geq \varepsilon$,
    \begin{align*}
    v_i(x,t_0)>v_i(x,0)\exp\left(-\int_0^{t_0}(d_i(x,\tau)-c_{ii}(x,\tau))d\tau\right)\geq \varepsilon e^{-C_0T}>0\quad \text{ for all } i\in[n]\mbox{ and all }x\in\Omega,
    \end{align*}
    where $C_0:=\max_{i\in[n]}\|d_i-c_{ii}\|_{L^{\infty}(\Omega_T)}$.
    This contradicts \eqref{inf>0}.
    Therefore, $v_i\geq0$ on $\Omega_T$ for each $i\in[n]$, and thus
    $$u_i(x,t)\geq-\varepsilon e^{K_0t}\mbox{ for each }(x,t)\in\Omega_T\mbox{ and each }i\in[n].$$
    Since $\varepsilon>0$ is arbitrary, we can conclude the proof.
\end{proof}

We next establish a strong maximum principle. The following lemma will be used in our proof.

\begin{lemma}\label{graphlemma}
 Let $P=\begin{bmatrix}
     P_{ij}
 \end{bmatrix}$ be an essentially nonnegative irreducible matrix and $w=(w_1,\dots,w_n)^\top$ be a nonnegative vector with the property
 \begin{align}\label{pro1}
     \mbox{whenever $w_i=0$ for some $i\in[n]$, we have $\sum_{j=1}^nP_{ij}w_j=0$}.
 \end{align}
If $w_i=0$ for some $i\in[n]$, then $w$ is a zero vector.
\end{lemma}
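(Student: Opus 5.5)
The plan is a graph-connectivity argument on the directed graph associated with $P$. Let $Z:=\{i\in[n]: w_i=0\}$, and suppose $Z\neq\emptyset$. We aim to show $Z=[n]$. Since $w\ge 0$, it suffices to show that $Z$ is closed under following edges of the graph of $P$. More precisely, if $i\in Z$ and $P_{ij}\neq 0$ with $j\neq i$, we want $j\in Z$.

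Fix $i\in Z$. By \eqref{pro1},
\begin{align*}
0=\sum_{j=1}^n P_{ij}w_j = P_{ii}w_i+\sum_{j\neq i}P_{ij}w_j=\sum_{j\neq i}P_{ij}w_j,
\end{align*}
because $w_i=0$. Essential nonnegativity gives $P_{ij}\ge0$ for $j\ne i$, and $w_j\ge 0$. Hence every summand $P_{ij}w_j$ with $j\neq i$ is nonnegative, and since they sum to zero, each one vanishes. So whenever $P_{ij}>0$ we get $w_j=0$, that is, $j\in Z$. Note that the diagonal entry $P_{ii}$, whose sign is not controlled, plays no role, because it multiplies $w_i=0$.

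Now suppose $Z\neq[n]$ and use irreducibility. Irreducibility of $P$ means there is no proper nonempty subset $S\subset[n]$ with $P_{ij}=0$ for all $i\in S$, $j\notin S$. Equivalently, the directed graph with an edge $i\to j$ whenever $P_{ij}\neq0$ is strongly connected. Applying this with $S=Z$ gives some $i\in Z$ and $j\notin Z$ with $P_{ij}\neq 0$. Since $j\ne i$, this entry is off-diagonal, so $P_{ij}>0$. The previous step then forces $j\in Z$, a contradiction. Therefore $Z=[n]$ and $w=0_n$.

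There is no real obstacle here. The only point needing care is to use the off-diagonal characterization of irreducibility, so that the uncontrolled sign of the diagonal entries does not interfere.
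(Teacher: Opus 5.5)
Your proof is correct and follows essentially the same argument as the paper. In both, zeros of $w$ propagate along the off-diagonal edges of the graph of $P$ because each term $P_{ij}w_j$ with $j\neq i$ is nonnegative, and irreducibility then forces the zero set to be all of $[n]$; the only difference is that you use the ``no closed proper subset'' form of irreducibility, while the paper propagates the zero along an explicit path $i\to i_1\to\cdots\to\ell$, applying \eqref{pro1} at each vertex.
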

\begin{proof}
    Since $P$ is irreducible, 
    the associated directed graph is strongly connected. Therefore,
    for any $i\in[n],$ every vertex $\ell\in[n]$ can be reached from $i$ by a sequence of edges
    $$i\longrightarrow i_1\longrightarrow i_2\longrightarrow\cdots\longrightarrow i_k\longrightarrow\ell,$$
    which means that the corresponding entries $P_{ii_1},P_{i_1i_2},\dots,P_{i_k\ell}$ are all positive.
    If $w_i=0,$ then by \eqref{pro1},
    \[
    \sum_{j\in[n]\setminus\{i\}} P_{ij} w_j =\sum_{j=1}^n P_{ij} w_j =0,
    \]
    which implies that $w_i=w_{i_1}=\cdots=w_{i_k}=w_{\ell}=0.$ Since $\ell$ is arbitrary, we conclude that $w=0.$
\end{proof}

\begin{proposition}[Strong Maximum Principle]\label{SMP}
    Assume {\rm{(M)}}. Suppose that for each $i\in[n],$ $u_i\geq 0$ on $\Omega\times[0,T]$,
     $u_i(\cdot,t)\in \mathcal{C}(\Omega)$ for each $t\in[0,T]$, 
$u_i(x,\cdot)\in \mathcal{C}([0,T])\cap \mathcal{C}^1((0,T])$ for each $x\in\Omega$,
     and satisfies the inequality
    \begin{align*}
         \frac{\partial u_i}{\partial t}(x,t)&\geq d_i(x,t)\int_{\Omega}J(x-y)u_i(y,t)dy-d_i(x,t)u_i(x,t)+\sum_{j=1}^nc_{ij}(x,t)u_j(x,t)\mbox{ for }(x,t)\in\Omega_T.
    \end{align*}
    Furthermore, suppose that $J$ is continuous at $0$ and $J(0)>0.$
    If there is an $(x_0,t_0)\in\Omega_T$ such that 
    \begin{enumerate}[(i)]
        \item $d_{i}(x,t_0)>0$ for each $i\in[n]$ and each $x\in\Omega,$
        \item $\begin{bmatrix}
        c_{ij}(x_0,t_0)
    \end{bmatrix}$ is irreducible, and
    \item $u_{i_0}(x_0,t_0)=0$ for some $i_0\in[n],$
    \end{enumerate}
    then $u_i\equiv 0$ on $\Omega\times[0,t_0]$ for each $i\in[n].$
\end{proposition}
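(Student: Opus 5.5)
We argue in two stages: first we show that the zero propagates through the components at the single point $x_0$, and then that it propagates in space and backward in time.

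\textbf{Stage 1: all components vanish at $(x_0,t_0)$.} Fix any $i$ with $u_i(x_0,t_0)=0$. Since $u_i\ge0$ and $u_i(x_0,\cdot)$ is $\mathcal{C}^1$ on $(0,T]$, the point $t_0$ is a minimum of $u_i(x_0,\cdot)$ on $[0,t_0]$, so the left derivative gives $\partial_t u_i(x_0,t_0)\le 0$. Substituting into the differential inequality at $(x_0,t_0)$ and using $u_i(x_0,t_0)=0$ yields
\[
0\ \ge\ d_i(x_0,t_0)\int_\Omega J(x_0-y)u_i(y,t_0)\,dy+\sum_{j\ne i}c_{ij}(x_0,t_0)u_j(x_0,t_0).
\]
Both terms on the right are nonnegative, so both must vanish. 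In particular, the vector $w=(u_j(x_0,t_0))_j$ satisfies property \eqref{pro1} for the matrix $P=[c_{ij}(x_0,t_0)]$; the diagonal entries of $P$ are irrelevant because $w_i=0$. This matrix is essentially nonnegative by (M) and irreducible by (ii). Lemma \ref{graphlemma} then gives $w=0$, that is, every component vanishes at $(x_0,t_0)$.

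\textbf{Stage 2: spatial spreading at time $t_0$.} Applying the vanishing of the integral term for every $i$ and using $d_i(x_0,t_0)>0$ from (i), we get $\int_\Omega J(x_0-y)u_i(y,t_0)\,dy=0$. Since $J$ is continuous at $0$ with $J(0)>0$, we have $J>0$ on a ball $B_\rho$. Together with $u_i\ge0$ and the continuity of $u_i(\cdot,t_0)$, this forces $u_i(\cdot,t_0)=0$ on $B_\rho(x_0)\cap\Omega$. The main obstacle is to continue this spreading: irreducibility is assumed only at $x_0$, so Stage 1 cannot be repeated at nearby points. This is avoided by noting that the spreading needs only (i) and the fact that all components vanish at the point. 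At any $x_1\in\Omega$ where every $u_i(x_1,t_0)=0$, each $u_i(x_1,\cdot)$ attains its minimum at $t_0$, so $\partial_tu_i(x_1,t_0)\le0$. The same inequality then gives $d_i(x_1,t_0)\int_\Omega J(x_1-y)u_i(y,t_0)\,dy\le0$, and since $d_i(x_1,t_0)>0$, all $u_i$ vanish on $B_\rho(x_1)\cap\Omega$. Hence the set $Z=\{x\in\Omega: u_i(x,t_0)=0\ \forall i\}$ is open in $\Omega$, closed by continuity, and nonempty. As $\Omega$ is connected, $Z=\Omega$.

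\textbf{Stage 3: backward in time.} Fix $x\in\Omega$ and $i$. Using $c_{ij}u_j\ge0$ for $j\ne i$ and $\int J u_i\ge0$, the inequality gives
\[
\partial_t u_i(x,t)\ \ge\ -\bigl(d_i(x,t)-c_{ii}(x,t)\bigr)u_i(x,t).
\]
The coefficient $d_i-c_{ii}$ is bounded by (M). Hence $t\mapsto u_i(x,t)\exp\bigl(\int_0^t(d_i-c_{ii})\,d\tau\bigr)$ is nondecreasing on $(0,t_0]$, and by continuity on $[0,t_0]$. Its value at $t_0$ is $0$ and it is nonnegative everywhere, so it vanishes on $[0,t_0]$. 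Therefore $u_i\equiv0$ on $\Omega\times[0,t_0]$.
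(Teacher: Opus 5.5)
Your proposal is correct and follows the paper's proof essentially step for step: you use Lemma \ref{graphlemma} at $(x_0,t_0)$, then an open--closed argument for the common zero set at time $t_0$ that needs only (i) once all components vanish, and then an integrating-factor (Gr\"onwall-type) argument backward in time. The only differences are cosmetic: you work with $B_\rho(x_1)\cap\Omega$ instead of a ball contained in $\Omega$, and you use the factor $d_i-c_{ii}$ where the paper uses the uniform bound $\mu=\sup(d_i+|c_{ii}|)$.
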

\begin{proof}
First of all, since $J$ and $u_i(\cdot,t)$ are nonnegative on $\Omega$ for each $i\in[n]$ and each $t\in[0,T],$ the integral $\int_{\Omega}J(x-y)u_i(y,t)dy$ is a nonnegative extended real number. Furthermore, the given inequality and (i) together guarantee that $\int_{\Omega}J(x-y)u_i(y,t_0)dy$ is finite for each $i\in[n]$ and each $x\in\Omega$.
    The remainder of the proof is divided into three steps.

   {\bf Step 1}. {\it  $u_i(x_0,t_0)=0$ for each $i\in[n].$}

   Let $k\in[n]$ be any index such that $u_k(x_0,t_0)=0$.
    Since $u_{k}$ is nonnegative and $u_{k}(x_0,t_0)=0,$ it must be a minimum and so
    \begin{align*}
        0\ge\frac{\partial u_{k}}{\partial t}(x_0,t_0)&\geq d_{k}(x_0,t_0)\int_{\Omega}J(x_0-y)u_{k}(y,t_0)dy-d_{k}(x_0,t_0)u_{k}(x_0,t_0)+\sum_{j=1}^nc_{kj}(x_0,t_0)u_j(x_0,t_0)\\
        &\geq\sum_{j=1}^nc_{kj}(x_0,t_0)u_j(x_0,t_0)=\sum_{j\in[n]\setminus\{k\}}c_{kj}(x_0,t_0)u_j(x_0,t_0)\geq 0,
    \end{align*}
    which implies $\sum_{j=1}^nc_{kj}(x_0,t_0)u_j(x_0,t_0)=0$. 
    Therefore, $w=(w_1,\ldots,w_n)^\top
:=(u_1(x_0,t_0),\ldots,u_n(x_0,t_0))^\top$ satisfies \eqref{pro1} in Lemma~\ref{graphlemma}.
    Since 
    $\begin{bmatrix}
        c_{ij}(x_0,t_0)
    \end{bmatrix}$ is essentially nonnegative and irreducible, and $w_k=0$,
    we have $w=0_n$ by Lemma \ref{graphlemma}. Therefore, Step 1 is complete. 

    {\bf Step 2}. {\it  $u_i(x,t_0)=0$ for any $i\in[n]$ and any $x\in\Omega$.}\\
    Since $J$ is continuous at $0$ and $J(0)>0,$ there is a $\delta>0$ such that $J>0$ on $B_\delta(0).$
    Let
    $$Z_0=\{x\in\Omega:u_i(x,t_0)=0\mbox{ for each }i\in[n]\}.$$
    By Step 1, $Z_0$ is nonempty as $x_0\in Z_0$. Now, let $x_1\in Z_0$ and choose a $\delta_1\in(0,\delta)$ such that $B_{\delta_1}(x_1)\subseteq\Omega$. Then for each $i\in[n],$
    \begin{align*}
        0\ge\frac{\partial u_i}{\partial t}(x_1,t_0)\geq d_{i}(x_1,t_0)\int_{\Omega}J(x_1-y)u_i(y,t_0)dy\geq d_{i}(x_1,t_0)\int_{B_{\delta_1}(x_1)}J(x_1-y)u_i(y,t_0)dy\ge 0.
    \end{align*}
    Then $\int_{B_{\delta_1}(x_1)}J(x_1-y)u_i(y,t_0)dy=0$. Note that $J(x_1-y)>0$ for all $y\in B_{\delta_1}(x_1)$. Then we have $u_{i}(y,t_0)=0$ for all $y\in B_{\delta_1}(x_1),$ since $u_i$ is nonnegative and is continuous in $y$.
     In other words, $B_{\delta_1}(x_1)\subseteq Z_0$. Hence, $Z_0$ is an open set in $\Omega.$ However, $u_i(\cdot,t_0)$ is continuous on $\Omega,$ which means that $Z_0$ is also a closed set in $\Omega.$ As $\Omega$ is a connected set, we conclude that $Z_0=\Omega,$ i.e., the proof of Step 2 is complete.

    {\bf Step 3}. {\it  $u_i\equiv 0$ on $\Omega\times[0,t_0]$ for each $i\in[n].$}\\
    Let 
    $$\mu=\sup\left\{d_i(x,t)+|c_{ii}(x,t)|:i\in[n],(x,t)\in\overline{\Omega_T}\right\}.$$
    Then $$\frac{\partial u_i}{\partial t}(x,t)\geq -\mu u_i(x,t)\mbox{ for each }(x,t)\in\Omega_T\mbox{ and each }i\in[n].$$
    For each $i\in[n],$ since $u_i(x,t_0)=0$ for any $x\in\Omega,$ we see that $u_i(x,t)=0$ for any $x\in\Omega$ and $t\in[0,t_0],$ as desired.
    \end{proof}
\medskip

\subsection{A Harnack-type inequality}
Throughout this subsection, we let $$\mathcal{D}(s)=\mathrm{diag}(d_1(s),\dots,d_n(s)),$$
in which $d_i(\cdot)\in\mathcal{C}(\mathbb{R})$ for each $i=1,\dots,n$. 
Also, we always assume that there exist $d_{\max}\geq d_{\min}>0$ such that $$d_{\min}\leq  d_i(s)\leq d_{\max}\mbox{ for each }i\in[n]\mbox{ and each }s\in\mathbb{R}.$$

In this subsection, we always assume that $J:\mathbb{R}\to\mathbb{R}$ satisfies (J0), while (J1) or (J2) is not required.
Under this assumption, we prove the central mathematical tool of our analysis--the $\ell^1$-norm Harnack-type inequality (Theorem \ref{generalizedHarnack}).

\begin{proof}[Proof of Theorem \ref{generalizedHarnack}]
    Let $\Lambda_1$ be the minimum column sum of $P_1$ and $\Lambda_2$ the maximum column sum of $P_2.$ Then
    $$\Lambda_11_n^\top\leq1_n^\top P_1\mbox{ and }1_n^\top P_2\leq\Lambda_21_n^\top.$$
    Thus, 
    \begin{align}\label{ineq-p}
       \Lambda_11_n^\top p(s)\leq 1_n^\top P_1p(s)&\leq c1_n^\top p'(s)-1_n^\top \mathcal{D}(s)\bigg(\int_{\mathbb{R}}J(s-s')p(s')ds'- p(s)\bigg)\\&\leq 1_n^\top P_2p(s)\leq\Lambda_21_n^\top p(s),\quad s\in\mathbb{R}.\notag
    \end{align}
    Let 
    \begin{align*}
    z(s)=|p(s)|_1,\quad  \varphi(s)=\frac{z'(s)}{z(s)}.
    \end{align*}
   Since $p(s)>0_n$ for each $s\in\mathbb{R}$, we have $z(s)=1_n^\top p(s)$
 and thus $z'(s)=1_n^\top p'(s)$. Also, using
 \begin{align*}
    d_{\min}z(s) \leq 1_n^\top \mathcal{D}(s)p(s) \leq d_{\max}z(s) \quad \text{for all } s \in \mathbb{R},
\end{align*}    
    from \eqref{ineq-p} we have
    \begin{align}\label{key-est}
        (\Lambda_1-d_{\max})+d_{\min}F(s)\leq c\varphi(s)\le (\Lambda_2-d_{\min})+d_{\max}F(s),\quad s\in\mathbb{R},
    \end{align}
    where 
    $F(s):=\int_{\mathbb{R}}J(s')\exp\left(\int_s^{s-s'}\varphi(\tau)d\tau\right)ds'.$

   Next, we introduce
   \begin{align}\label{defofw}
       w(s)=\exp\left(ms+\int_0^s\varphi(\tau)d\tau\right),
   \end{align}  
 where $m=-\frac{\Lambda_1-d_{\max}}{c}.$
To prove part (a), we divide our discussion into two cases: $c>0$ and $c<0.$ 
   We first assume that $c>0.$
   Then 
   \begin{align}\label{w-inc}
       w'(s)=w(s)\Big(\varphi(s)-\frac{\Lambda_1-d_{\max}}{c}\Big)\geq \frac{d_{\min}}{c}w(s)F(s)=\frac{d_{\min}}{c}\int_{\mathbb{R}}J(s')e^{ms'}w(s-s')ds'>0
   \end{align}
   and so $w$ is increasing on $\mathbb{R}$. Since $J$ is continuous at $0$ and $J(0)>0$, there exists $\delta>0$ such that $J(s)>0$ for all $s\in(-3\delta,3\delta)$. Together with \eqref{w-inc}, we have
   \begin{align*}
       w(s)&\geq w(s)-w(s-\delta)=\int_{s-\delta}^s w'(\tau)d\tau
       \geq \frac{d_{\min}}{c}\int_{\mathbb{R}}J(s')e^{ms'}\int_{s-\delta}^sw(\tau-s')d\tau ds'\\
       &\geq \frac{d_{\min}\delta}{c}\int_{-3\delta}^{-2\delta}J(s')e^{ms'}w(s-\delta-s')ds'
       \geq \frac{d_{\min}\delta}{c} w(s+\delta)\int_{-3\delta}^{-2\delta}J(s')e^{ms'}ds'.
   \end{align*}
   It follows that
   $$\frac{w(s+\delta)}{w(s)}\leq \frac{c}{d_{\min}\delta A_\delta}\mbox{ for all } s\in\mathbb{R},$$
   where $A_\delta=\int_{-3\delta}^{-2\delta}J(s')e^{ms'}ds'>0$.
   
   Similarly, since $w$ is increasing, 
   \begin{align*}
       w(s+\delta)&\geq w(s+\delta)-w(s)\geq  \frac{d_{\min}}{c}\int_{\mathbb{R}}J(s')e^{ms'}\int^{s+\delta}_sw(\tau-s')d\tau ds'\\&\geq  \frac{d_{\min}\delta}{c}\int_{\mathbb{R}}J(s')e^{ms'}w(s-s')ds'
   \end{align*}
   and so 
   \begin{align*}
       \frac{w(s+\delta)}{w(s)}\geq  \frac{d_{\min}\delta}{c}\int_{\mathbb{R}}J(s')e^{ms'}\frac{w(s-s')}{w(s)}ds'= \frac{d_{\min}\delta}{c} F(s),
   \end{align*}
   where we used the fact that 
   \begin{align*}
   \frac{w(s-s')}{w(s)} = e^{-ms'}\exp\left(\int_s^{s-s'}\varphi(\tau)d\tau\right).
   \end{align*}
   This implies
   $$F(s)\leq\frac{c}{d_{\min}\delta}\frac{w(s+\delta)}{w(s)}\leq \frac{c^2}{(d_{\min}\delta)^2A_\delta}.$$
   Substituting this upper bound into \eqref{key-est} and noting that $F>0$, we have
   $$\Lambda_1-d_{\max}\leq c\varphi(s)\leq \frac{d_{\max}c^2}{(d_{\min}\delta)^2A_\delta}+(\Lambda_2-d_{\min})<\infty\mbox{ for all }s\in\mathbb{R}.$$
   This completes the proof of part (a) when $c>0$.

   The case $c<0$ can be treated using a symmetric argument.
Following similar steps as in the previous case, and choosing a suitable constant $B_\delta:=\int_{2\delta}^{3\delta}J(s')e^{ms'}ds',$ we can show that
   $$\Lambda_1-d_{\max}\leq c\varphi(s)\leq \frac{d_{\max}c^2}{(d_{\min}\delta)^2B_\delta}+(\Lambda_2-d_{\min})<\infty\mbox{ for all }s\in\mathbb{R}.$$
    This proves that $\varphi$ is bounded on $\mathbb{R}$ and the bounds are independent of $p(s).$ Then part (a) is asserted.

       For part (b), let $y(s)=\ln z(s)$ for $s\in\mathbb{R},$ where $z(s)=|p(s)|_1 =1_n^\top p(s)>0$. Recall the definition of $\varphi(s)$ in the beginning of the proof. 
       Then $y'(s)=\varphi(s)$ and thus
    $|y'(s)|=|\varphi(s)|\leq M$ for all  $s\in\mathbb{R}$, where $M>0$ is chosen in part (a). By the mean value theorem, for each $\xi\in\mathbb{R}$ and $s\in[-r,r]$, we have
    \begin{align*}
    |y(\xi+s)-y(\xi)|\leq M|s|\leq Mr.
    \end{align*}
    It follows that 
$$e^{-Mr} \leq e^{y(\xi+s)-y(\xi)} \leq e^{Mr},\quad \xi\in\mathbb{R},\ s\in[-r,r].$$
Since $e^{y(s)} = z(s) = |p(s)|_1$ for all $s\in\mathbb{R}$, 
we have
\[
e^{-Mr}\le \frac{z(\xi+s)}{z(\xi)}=\frac{1_n^\top p(\xi+s)}{1_n^\top p(\xi)}\le e^{Mr},\quad \xi\in\mathbb{R},\ s\in[-r,r].
\]
    Then part (b) is asserted. This completes the proof.
\end{proof}

Thanks to Theorem \ref{generalizedHarnack}, we immediately obtain the following result.

\begin{corollary}\label{lmalinear}
Assume that $J$ satisfies {\rm{(J0)}}. Let $c$ be a nonzero real constant, and $P$ be any
$n \times n$ real matrix.
Let $p(s)=(p_1(s),p_2(s),\dots,p_n(s))$ be a positive $\mathcal{C}^1$ function satisfying
the linear system
    \begin{align*}
        cp'(s)=\mathcal{D}(s)\bigg(\int_{\mathbb{R}}J(s-s')p(s')ds'-p(s)\bigg)+Pp(s)\mbox{ for }s\in\mathbb{R}.
    \end{align*}
    Then the same conclusions (a) and (b) in Theorem \ref{generalizedHarnack} hold.
\end{corollary}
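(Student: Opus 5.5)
The plan is to reduce Corollary \ref{lmalinear} directly to Theorem \ref{generalizedHarnack} by taking $P_1=P_2=P$. Rearranging the linear system gives
\begin{align*}
cp'(s)-\mathcal{D}(s)\bigg(\int_{\mathbb{R}}J(s-s')p(s')ds'-p(s)\bigg)=Pp(s),\quad s\in\mathbb{R},
\end{align*}
so the two-sided inequality \eqref{linearinequality} holds componentwise, with equality, for $P_1=P_2=P$.

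The remaining hypotheses of Theorem \ref{generalizedHarnack} are inherited unchanged from the standing assumptions of this subsection:
\begin{itemize}
\item $J$ satisfies (J0);
\item $c\neq 0$;
\item $\mathcal{D}(s)=\mathrm{diag}(d_1(s),\dots,d_n(s))$ with continuous entries satisfying $d_{\min}\le d_i(s)\le d_{\max}$;
\item $p\in\mathcal{C}^1(\mathbb{R};(0,\infty)^n)$.
\end{itemize}
Hence Theorem \ref{generalizedHarnack} applies and yields a constant $M>0$, depending only on $J$, $c$, $d_{\max}$, $d_{\min}$ and $P$ (since $P_1=P_2=P$). With this $M$, conclusions (a) and (b) hold for $p$.

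One point deserves a brief check: the integral $\int_{\mathbb{R}}J(s-s')p(s')ds'$ must be finite. This is implicit in the meaning of the equation, since the equation equates it to finite quantities. Beyond that there is no real obstacle; the argument is a direct specialization of the theorem.
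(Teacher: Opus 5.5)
Your proposal is correct and matches the paper's own proof: taking $P_1=P_2=P$ makes \eqref{linearinequality} hold with equality, and Theorem \ref{generalizedHarnack} then gives (a) and (b). The subsection's standing hypotheses on $\mathcal{D}(s)$ carry over unchanged, and $M$ depends only on $J$, $c$, $d_{\max}$, $d_{\min}$ and $P$.
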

\begin{proof}
    This corollary follows directly from Theorem \ref{generalizedHarnack} with $P_1=P_2=P.$
\end{proof}
\begin{remark}\label{rmk:uniforminc}
    Although the constant $M$ in Theorem \ref{generalizedHarnack} (and Corollary \ref{lmalinear}) depends on the wave speed $c$, the estimate remains uniform for $c$ within any bounded interval away from zero. This specific form of uniformity is precisely what is required for the rescaling arguments in Lemma \ref{limsuppositive}.
\end{remark}

\medskip

\subsection{Exponential solutions and the characteristic equation}
Throughout the remainder of this section, we assume that $$\mathcal{D}=\mathrm{diag}(d_1,\dots,d_n)$$ is a constant diagonal matrix with positive diagonal entries.

The result in this subsection needs assumptions (J0) and (J1) (where (J1) is defined in Section \ref{sec:mainresult}).

We first establish the strong maximum principle for the linearized system, which is a corollary of Proposition \ref{SMP}.

\begin{lemma}\label{SMPlinearized}
 Assume that $J$ satisfies {\rm{(J0)}}. Let $c\in\mathbb{R}$.
 Also, let $P_{ij}(\cdot)\in\mathcal{C}(\mathbb{R})\cap L^\infty(\mathbb{R})$ for $i,j\in[n]$ and assume that $P(s)=\begin{bmatrix}
     P_{ij}(s)
 \end{bmatrix}$ is an essentially nonnegative irreducible matrix for each $s\in\mathbb{R}$.
    If $p=(p_1,\dots,p_n)$ is a nonnegative $\mathcal{C}^1$
    function 
    satisfying the inequality
    \begin{align*}
    cp'(s)\geq\mathcal{D}\bigg(\int_{\mathbb{R}}J(s-s')p(s')ds'-p(s)\bigg)+P(s)p(s)
       \mbox{ for each }s\in\mathbb{R},
    \end{align*}
    then $p(s)$ is either positive on $\mathbb{R},$ or identically zero on $\mathbb{R}.$
\end{lemma}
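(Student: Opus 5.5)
The plan is to reduce Lemma \ref{SMPlinearized} to the Strong Maximum Principle (Proposition \ref{SMP}) by turning the traveling-coordinate inequality into a parabolic one. We treat the cases $c\neq 0$ and $c=0$ separately.

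\textbf{Case $c\neq 0$.} For $c>0$ we set $u_i(x,t):=p_i(x+ct)$ on $\Omega=\mathbb{R}$ ($N=1$), which yields
\[
\partial_t u_i=c\,p_i'(x+ct)\geq d_i\int_{\mathbb{R}}J(x-y)u_i(y,t)\,dy-d_iu_i+\sum_j P_{ij}(x+ct)u_j .
\]
Here we use the substitution $s'=y+ct$ in the convolution. The coefficients $c_{ij}(x,t):=P_{ij}(x+ct)$ are continuous and bounded, and they are nonnegative off the diagonal. The constant $d_i>0$ satisfies (M) and condition (i). The matrix $[c_{ij}(x_0,t_0)]=P(x_0+ct_0)$ is irreducible, which gives (ii). The continuity requirements on $u_i$ follow from $p\in\mathcal{C}^1$. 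For $c<0$ the same substitution works, since nothing in Proposition \ref{SMP} depends on the sign of $c$.

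Suppose $p_{i_0}(s_0)=0$ for some $i_0$ and $s_0$. We pick $T>0$, $t_0=T$ and $x_0=s_0-cT$, so that $u_{i_0}(x_0,t_0)=0$. Proposition \ref{SMP} then gives $u\equiv0$ on $\mathbb{R}\times[0,T]$, hence $p(x+ct)=0$ for all $x$, i.e.\ $p\equiv 0$. Otherwise $p$ is positive everywhere.

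\textbf{Case $c=0$.} The inequality reads $0\ge \mathcal{D}(J*p-p)+P(s)p$. One could take $u_i(x,t)=p_i(x)$, which is independent of $t$, so $\partial_t u_i=0\geq\cdots$ holds and Proposition \ref{SMP} applies directly. Alternatively, we can argue by hand as in Steps 1--2 of that proof. At a zero $s_0$ of $p_{i_0}$ we get $d_{i_0}\int J(s_0-s')p_{i_0}(s')\,ds'+\sum_{j\neq i_0}P_{i_0j}(s_0)p_j(s_0)\le 0$. Both terms are nonnegative, so Lemma \ref{graphlemma} gives $p(s_0)=0$. The convolution term then forces $p=0$ near $s_0$, because $J>0$ near $0$. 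A connectedness argument finishes.

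\textbf{Main obstacle.} The main point to check is the regularity and integrability hypotheses of Proposition \ref{SMP}, in particular finiteness of the convolution when $p$ is not assumed bounded. The proof of Proposition \ref{SMP} only needs $u\ge0$, pointwise continuity, and the inequality itself, which guarantees finiteness. So no boundedness of $p$ is required, and we expect this step to go through.
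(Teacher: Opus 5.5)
Your proposal is correct and follows the paper's proof, which consists of the substitution $u_i(x,t)=p_i(x+ct)$ followed by an application of Proposition \ref{SMP}. Treating $c=0$ separately is unnecessary, because the same substitution covers that case verbatim; your hypothesis checks (continuity and boundedness of $c_{ij}$, irreducibility, and finiteness of the convolution without assuming $p$ is bounded) are all sound.
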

\begin{proof}
This is an immediate result from the strong maximum principle (Proposition \ref{SMP}) by setting $u_i(x,t)=p_i(x+ct)$ for $i\in[n],x\in\mathbb{R}$ and $t\geq 0.$
\end{proof}

Due to Corollary \ref{lmalinear}, we have the following result. 
A key step in our proof involves a rescaling argument, where the validity lies in the use of our Harnack-type inequality.

\begin{proposition}\label{linearexp}
 Assume that $J$ satisfies {\rm{(J0)}} and {\rm (J1)}. Let $c$ be a nonzero real constant and let $P$ be an essentially nonnegative irreducible matrix.
 If the linear system
 \begin{align}\label{linearsystem1}
        cp'(s)=\mathcal{D}\bigg(\int_{\mathbb{R}}J(s-s')p(s')ds'-p(s)\bigg)+Pp(s)\mbox{ for }s\in\mathbb{R}.
    \end{align}
has a positive $\mathcal{C}^1$ solution on $\mathbb{R}$,
then there exist $\lambda\in\R$ and a positive vector $\psi\in\R^n$ such that $p^{\infty}(s):=e^{\lambda s}\psi$ is a positive exponential solution of \eqref{linearsystem1}  on $\mathbb{R}$,
where $\lambda$ satisfies the characteristic equation
$$\Lambda_1(\mathcal{H}_{\lambda,c}+P)=0,$$
and $\psi$ is the corresponding right positive Perron eigenvector, and
$$\mathcal{H}_{\lambda,c}:=\left(\int_{\mathbb{R}}J(s)e^{-\lambda s}ds-1\right)\mathcal{D}-c\lambda I_n.$$
\end{proposition}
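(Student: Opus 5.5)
We will follow the rescaling strategy sketched before the statement, with Corollary~\ref{lmalinear} supplying the compactness. Let $p$ be a positive $\mathcal{C}^1$ solution of \eqref{linearsystem1}. Set $z(s)=|p(s)|_1$ and $\varphi=z'/z$. By Corollary~\ref{lmalinear}(a), $\varphi$ is bounded on $\mathbb{R}$, say $|\varphi|\le M$. Define
\[
\lambda:=\limsup_{s\to-\infty}\varphi(s)
\]
(or, if more convenient, $\liminf$, or a limit along a sequence to be chosen). Choose $s_k\to-\infty$ with $\varphi(s_k)\to\lambda$. Then form the normalized translates
\[
p^k(s):=\frac{p(s+s_k)}{|p(s_k)|_1}.
\]
Each $p^k$ solves \eqref{linearsystem1}. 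By Corollary~\ref{lmalinear}(b), $|p^k(s)|_1\le e^{M|s|}$.

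The first step is compactness. The equation expresses $cp^{k\prime}$ as a bounded combination of $p^k$ and of $J*p^k$. Thanks to (J1), $\int J(s')e^{M|s'|}ds'<\infty$, so $J*p^k$ is locally bounded uniformly in $k$. Hence $p^{k\prime}$ is locally uniformly bounded. By Arzel\`a--Ascoli and a diagonal argument, $p^k\to p^\infty$ locally uniformly. Passing to the limit in the convolution uses dominated convergence with the dominating function $J(s-s')e^{M|s'|}$, which is again where (J1) enters. Rewriting the equation in integrated form shows that $p^\infty$ is $\mathcal{C}^1$ and solves \eqref{linearsystem1}. It is nonnegative with $|p^\infty(0)|_1=1$, so by Lemma~\ref{SMPlinearized} it is positive. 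Moreover $p^\infty$ inherits the bound $|p^\infty(s)|_1\le e^{M|s|}$, and by the limsup construction its log-derivative $\varphi^\infty=(|p^\infty|_1)'/|p^\infty|_1$ satisfies $\varphi^\infty\le\lambda$ everywhere, with $\varphi^\infty(0)=\lambda$.

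The second step, which I expect to be the main obstacle, is to show that $p^\infty$ is exactly exponential. In the scalar or cooperative-with-componentwise-Harnack setting one differentiates, applies the strong maximum principle to $q=p^{\infty\prime}-\lambda p^\infty$, and concludes. Here $\varphi^\infty\le\lambda$ only controls the sum $1_n^\top(p^{\infty\prime}-\lambda p^\infty)$, not each component. My first attempt is to work with a family of ratios rather than the $\ell^1$ norm. Consider
\[
\kappa:=\sup_{s}\max_i \frac{p^\infty_i(s+h)}{p^\infty_i(s)}
\]
for fixed $h>0$. Since $P$ is essentially nonnegative and irreducible and $\mathcal{D}$ is constant, the difference $\kappa p^\infty(\cdot)-p^\infty(\cdot+h)$ is a nonnegative solution of the linear cooperative system. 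If this supremum is attained (possibly after a further translation limit, again using the compactness step), Lemma~\ref{SMPlinearized} forces $p^\infty(\cdot+h)\equiv\kappa p^\infty$. The hard point is ensuring the sup is finite and attained. Finiteness of the componentwise ratios is not given by the $\ell^1$ Harnack inequality. I would try to get it by iterating the strong maximum principle quantitatively along the irreducibility graph: the equation together with $J(0)>0$ shows that each component at $s$ is bounded below by a multiple of the neighbouring components over a $\delta$-window. Alternatively, one can iterate the rescaling, taking limits of ratios along maximizing sequences, to obtain a limit in which the sup is attained.

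Once $p^\infty(s+h)=\kappa_h p^\infty(s)$ holds for all $h$, we get $p^\infty(s)=e^{\lambda s}\psi$ with $\psi>0$. Substituting this into \eqref{linearsystem1} gives $(\mathcal{H}_{\lambda,c}+P)\psi=0$. Since $\mathcal{H}_{\lambda,c}+P$ is essentially nonnegative and irreducible and has the positive eigenvector $\psi$, Perron--Frobenius yields that $0=\Lambda_1(\mathcal{H}_{\lambda,c}+P)$ and that $\psi$, after normalization, is the right Perron eigenvector.
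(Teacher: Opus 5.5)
Your compactness step is correct and matches the paper. The gap is at the decisive step: you never prove that the limit $p^\infty$ is exactly exponential.

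Defining $\lambda$ through the $\ell^1$ log-derivative $\varphi=z'/z$ only gives $1_n^\top\big((p^\infty)'-\lambda p^\infty\big)\le 0$, with equality at $0$. As you note yourself, the sum satisfies no closed equation, so Lemma~\ref{SMPlinearized} cannot be applied.

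Your fallback uses $\kappa_h=\sup_s\max_i p^\infty_i(s+h)/p^\infty_i(s)$ with $h>0$. This needs an upper componentwise Harnack bound that nothing established so far provides. The ``quantitative strong maximum principle along the irreducibility graph'' and the ``iterated rescaling'' are only announced, not carried out. Even granting finiteness and attainment, you would obtain $\tilde p(\cdot+h)=\kappa_h\tilde p$ for one fixed $h$. This holds only for a new function $\tilde p$ produced by a further, $h$-dependent translation limit. Getting to $e^{\lambda s}\psi$ would then require an extra diagonal argument over a dense set of $h$, which you do not supply.

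The missing idea is to work with a componentwise quantity whose finiteness comes from the sign structure of the equation, not from any Harnack inequality.
\begin{enumerate}[(i)]
\item For $c>0$, since $J\ge0$, $p>0$ and $P_{ij}\ge 0$ for $i\ne j$, each component satisfies $cp_i'\ge (P_{ii}-d_i)p_i$. Hence $q_i:=p_i'/p_i\ge-\mu$ with $\mu=\max_i (d_i+|P_{ii}|)/c$, so $\lambda:=\min_i\inf_{s\in\mathbb{R}}q_i(s)$ is finite.
\item Pick $j$ and $s_k$ with $q_j(s_k)\to\lambda$, and rescale $\widehat p^{(k)}=p(\cdot+s_k)/|p(s_k)|_1$ exactly as you do. This is the only place Corollary~\ref{lmalinear} is needed.
\item By construction, $(\widehat p^{(k)})'-\lambda\widehat p^{(k)}=\widehat p^{(k)}\circ\big(q(\cdot+s_k)-\lambda 1_n\big)\ge 0_n$ componentwise. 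So $w^\infty:=(p^\infty)'-\lambda p^\infty$ is a componentwise nonnegative solution of \eqref{linearsystem1}; derivatives of solutions are solutions by linearity. It also satisfies $w^\infty_j(0)=0$.
\item Lemma~\ref{SMPlinearized} then forces $w^\infty\equiv 0_n$, i.e.\ $p^\infty(s)=e^{\lambda s}\psi$ in one stroke. For $c<0$, use $\max_i\sup_s q_i$ instead.
\end{enumerate}

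The same one-sided bound would have rescued your ratio approach had you taken $h<0$ (for $c>0$). It gives $p^\infty_i(s+h)\le e^{\mu|h|}p^\infty_i(s)$, so $\kappa_h<\infty$ immediately. The derivative version is still preferable, since it avoids the attainment and all-$h$ bookkeeping.
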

\begin{proof}
Let $p=(p_1,\dots,p_n)$ be a positive solution of \eqref{linearsystem1}. Then
the function
    $q=(q_1,\dots,q_n)$ defined by $$q_i(s):=\dfrac{p_i'(s)}{p_i(s)}\quad \mbox{ for }i\in[n],$$ is well-defined and is continuous on $\mathbb{R}$. We first assume that $c>0.$ The proof is cut into three steps.

    {\bf Step 1}. {\it  The quantity
    $$\lambda:=\min_{i\in[n]}\inf_{s\in\mathbb{R}}q_i(s)$$
    is finite.}

    Clearly, $\lambda<\infty$.
    Since $P$ is essentially nonnegative,
    we see that for each $i\in[n]$, 
    \begin{align*}
    cp_i'(s)=d_i\int_{\R}J(s-s')p_i(s')ds'-d_ip_i(s)+\sum_{j=1}^nP_{ij}p_j(s)\geq -d_i p_i(s)+P_{ii} p_i(s),\quad s\in\R,  
    \end{align*}
    which together with $c>0$ implies that
    $p_i'\geq-\mu p_i$ on $\mathbb{R}$,
    where $\mu=\max\limits_{i\in[n]}\left(\frac{d_i+|P_{ii}|}{c}\right)$. This implies $q_i\geq -\mu$ on $\mathbb{R}$ for each $i\in[n]$,  
    and thus $\lambda>-\infty$. This completes Step 1.
   
   {\bf Step 2}. {\it  The linear system \eqref{linearsystem1} has a positive solution $p^\infty(s)$ on $\mathbb{R}$ satisfying that 
   $$(p^\infty)'(s)\geq\lambda p^\infty(s)\mbox{ for all }s\in\mathbb{R}.$$}
   
    By Step 1, there exist $j\in[n]$ and a real sequence $\{s_k\}$ such that 
    \begin{align}\label{lambdasequence}
        \lambda=\lim\limits_{k\to\infty}q_j(s_k).
    \end{align}
    Consider the sequence 
    $$\widehat{p}^{(k)}(s):=\dfrac{p(s+s_k)}{|p(s_k)|_1},\quad s\in\mathbb{R}.$$
    Note that each $\widehat{p}^{(k)}$ solves \eqref{linearsystem1} and $|\widehat p^{(k)}(0)|_1=1$. Then,
    by Corollary \ref{lmalinear} (b), 
    there exists $M>0$, independent of $k$ such that
    \begin{align}\label{p-bound}
|\widehat p^{(k)}(s)|_1\leq e^{M|s|}
\quad
\text{for all } s\in\mathbb{R} \text{ and }k\in\mathbb{N}.
\end{align}
Set $M_0:=\int_{\mathbb{R}} J(\eta) e^{M|\eta|}d\eta$. By (J1), we have 
$M_0<\infty$. Thanks to \eqref{p-bound}, we obtain
\begin{align*}
\left|
\int_{\mathbb{R}}J(\eta)\widehat p^{(k)}(s-\eta)\,d\eta
\right|_1
\leq
\int_{\mathbb{R}}J(\eta)e^{M|s-\eta|}\,d\eta
\leq
M_0e^{M|s|}.
\end{align*}
Also, the equation satisfied by $\widehat p^{(k)}$ yields that 
\begin{align}\label{p'-bound}
|(\widehat p^{(k)})'(s)|_1
\leq C_1e^{M|s|}\quad
\text{for all } s\in\mathbb{R} \text{ and }k\in\mathbb{N}.
\end{align}
for some $C_1>0$, independent of $k$.
Combining \eqref{p-bound} and \eqref{p'-bound}, both $\{\widehat{p}^{(k)}\}$ and $\{(\widehat{p}^{(k)})'\}$ are
    locally uniformly bounded on $\mathbb{R}$.
    Consequently $\{\widehat{p}^{(k)}\}$ is equicontinuous on every compact interval.

    We now show the equicontinuity of  $\{(\widehat{p}^{(k)})'\}$ on every compact interval. Fix $L>0$. For $s_1,s_2\in [-L,L]$, from \eqref{p'-bound} we have
    \begin{align*}
       &\Big|\int_{\mathbb{R}} J(\eta) \Big(\widehat p^{(k)}(s_1-\eta)-\widehat p^{(k)}(s_2-\eta)\Big)d\eta \Big|_1\leq \int_{\mathbb{R}} J(\eta)\Big|\int_{s_1}^{s_2}(\widehat p^{(k)})'(\xi-\eta)d\xi\Big|_1 d\eta   \\
       &\leq C_1 e^{ML}|s_1-s_2|\int_{\mathbb{R}}J(\eta)e^{M|\eta|}d\eta\leq C_1M_0e^{ML}|s_1-s_2|.
    \end{align*}
    Together with the equation satisfied by $\widehat p^{(k)}$, 
    we obtain the equicontinuity of  $\{(\widehat{p}^{(k)})'\}$.
    
    By the Arzel\'{a}-Ascoli theorem,
    $\{\widehat{p}^{(k)}\}$ converges, up to a diagonal extraction, to a nonnegative function $p^\infty$ in $\mathcal{C}^1_{\mathrm{loc}}(\mathbb{R})$ with $|p^\infty(0)|_1=1$.
    We still denote this subsequence by $\{\widehat{p}^{(k)}\}$.

    We now verify that this $p^\infty$ is a positive solution of \eqref{linearsystem1} on $\mathbb{R}$.
    By \eqref{p-bound}, we have
    \begin{align*}
        |J(s')\widehat{p}_i^{(k)}(s-s')|\leq J(s')e^{M|s'|}\cdot e^{M|s|}\quad\mbox{for all }i\in[n]\mbox{ and all }s,s'\in\mathbb{R}.
    \end{align*}
    In view of (J1), the Lebesgue dominated convergence theorem deduces that
    \begin{align*}
       \lim\limits_{k\to\infty} \int_{\mathbb{R}}J(s')\widehat{p}^{(k)}(s-s')ds'=\int_{\mathbb{R}}J(s')p^\infty(s-s')ds'.
    \end{align*}
    Thus, $p^\infty$ is a solution of \eqref{linearsystem1} on $\mathbb{R}.$
  Note that $|p^\infty(0)|_1>0$ and $P$ is an essentially nonnegative irreducible matrix, we can apply
  Lemma \ref{SMPlinearized} to conclude that $p^\infty$ is positive on $\mathbb{R}$.

     By linearity,  $p^{\infty}$ is smooth and its derivatives of all orders solve \eqref{linearsystem1}. 
     Put
    $$w^{(k)}:=(\widehat{p}^{(k)})'-\lambda \widehat{p}^{(k)},\quad w^\infty:=(p^\infty)'-\lambda p^\infty.$$
    Then $w^\infty$ is a solution of \eqref{linearsystem1} on $\mathbb{R}$. 
    Furthermore, 
    \begin{align}\label{expandwinfty}
        w^{(k)}(s)=\widehat{p}^{(k)}(s)\circ(q(s+s_k)-\lambda 1_n)\mbox{ for }s\in\mathbb{R}\mbox{ and }k\in\mathbb{N},
    \end{align} 
    where $\circ$ denotes the Hadamard product.
    
    Now, fix an $s\in\mathbb{R}.$
    By the definition of $\lambda$, we have
    \begin{align*}
        \liminf_{k\to\infty}q_i(s+s_k)\geq\lambda \mbox{ for all } i\in[n].
    \end{align*}
    Therefore, 
    for any $\varepsilon>0,$ there is a $K=K(s,\varepsilon)\in\mathbb{N}$ such that 
    \begin{align*}
        q_i(s+s_k)\geq\lambda-\varepsilon\mbox{ for any }i\in[n]\mbox{ and any }k\geq K
    \end{align*}
    and thus,
    \begin{align}\label{wgeqp}
        w^{(k)}_i(s)\geq-\varepsilon\widehat{p}^{(k)}_i(s)\geq-\varepsilon\sup_{k\in\mathbb{N}}\widehat{p}^{(k)}_i(s).
    \end{align}
    Since $\sup\limits_{k\in\mathbb{N}}\widehat{p}^{(k)}_i(s)<\infty,$ letting $k\to\infty$ and then $\varepsilon\to0^+$ in \eqref{wgeqp} yields that $w^\infty$ is nonnegative. This proves Step 2.

    {\bf Step 3}. {\it  The solution $p^\infty(s)$ of \eqref{linearsystem1} in Step 2 satisfies the requirement.}

    By \eqref{lambdasequence} and \eqref{expandwinfty},
    $$w_j^\infty(0)=p_j^\infty(0)\left(\lim\limits_{k\to\infty}q_j(s_k)-\lambda\right)=0.$$
    As $w^\infty$ is a nonnegative solution of \eqref{linearsystem1} on $\mathbb{R}$ and $P$ is an essentially nonnegative irreducible matrix,
    Lemma \ref{SMPlinearized} shows that $w^\infty\equiv 0_n$ on $\mathbb{R}$. Thus,
    $$(p^\infty)'=\lambda p^\infty\mbox{ on }\mathbb{R}.$$
    Since $p^\infty$ is positive on $\mathbb{R}$, we see that there is a positive vector $\psi\in\mathbb{R}^n$ such that $$p^\infty(s)=e^{\lambda s}\psi\mbox{ for all }s\in\mathbb{R}.$$
    Since $|p^\infty(0)|_1=1$, we have $|\psi|_1=1.$ Furthermore,
    \begin{align*}
        c\lambda e^{\lambda s}\psi=\mathcal{D}\left(\int_{\mathbb{R}} J(s')e^{\lambda (s-s')}\psi ds'- e^{\lambda s}\psi\right)+P(e^{\lambda s}\psi)\mbox{ for all }s\in\mathbb{R}.
    \end{align*}
    Canceling $e^{\lambda s},$ we have
    $(\mathcal{H}_{\lambda,c}+P)\psi=0_n$,
    which means that $\mathcal{H}_{\lambda,c}+P$ has zero eigenvalue with a positive eigenvector $\psi$.
   Since $\mathcal H_{\lambda,c}+P$ is essentially nonnegative and irreducible, and it has a positive eigenvector $\psi$ associated with the eigenvalue $0$, the Perron--Frobenius theorem implies
that $\Lambda_1(\mathcal H_{\lambda,c}+P)=0$. Hence, Proposition~\ref{linearexp} holds for $c>0$.

    The case $c<0$ can be done similarly by showing that 
    $\max\limits_{i\in[n]}\sup\limits_{s\in\mathbb{R}}q_i(s)\in\mathbb{R}.$ The proof is now complete.
\end{proof}

\begin{remark}
    A counterpart to Proposition~\ref{linearexp} in the random-diffusion setting was established in \cite[Lemma 6.1]{girardin2017non}, which relies on a strong componentwise Harnack-type inequality \cite{arapostathis1999harnack}. 
    In the nonlocal-diffusion setting, however, such strong estimates are generally more difficult to obtain. In our analysis, we demonstrate that an $\ell^1$-norm Harnack-type inequality is sufficient to achieve the desired exponential bound.  
    This observation simplifies the technical requirements on the nonlocal kernels.
\end{remark}
\medskip

\subsection{Characterization of the minimal wave speed}
The results in this subsection require assumptions (J0)--(J2) (where (J1) and (J2) are defined in Section \ref{sec:mainresult}).

\begin{lemma}\label{lmalambda1}
Assume that $J$ satisfies {\rm (J0)--(J2)}. Let $P$ be an $n\times n$ essentially nonnegative matrix.
For each $c\in\mathbb{R},$ denote
\begin{align}\label{defHlambdaclinear}
    m(\lambda)=\int_{\mathbb{R}}J(s)e^{-\lambda s}ds-1,\quad \mathcal{H}_{\lambda,c}=m(\lambda)\mathcal{D}-c\lambda I_n,
\end{align}
and let
$$\Lambda(c)=\{\lambda\in\mathbb{R}:\Lambda_1(\mathcal{H}_{\lambda,c}+P)=0\}.$$
Then the following hold.
\begin{enumerate}[(a)]
    \item If $\Lambda_1(P)<0,$ then for any $c\in\mathbb{R}$, $\Lambda(c)=\{\underline{\lambda},\overline{\lambda}\}$ for some $\underline{\lambda}<0<\overline{\lambda}.$
    \item Suppose that $\Lambda_1(P)>0$ and define
    \begin{align}\label{generalc*}
        c_0:=\inf_{\lambda\in(0,\infty)}\frac{\Lambda_1(m(\lambda)\mathcal{D}+P)}{\lambda}.
    \end{align}
    Then $c_0>0$ and  $\Lambda(c)$ is exactly
    \begin{enumerate}[(i)]
        \item $\{\underline{\lambda},\overline{\lambda}\}$ for some $\underline{\lambda}<\overline{\lambda}<0$ when $c<-c_0;$
        \item $\{\underline{\lambda}=\overline{\lambda}\}$ for some $\underline{\lambda}=\overline{\lambda}<0$ when $c=-c_0;$
        \item $\emptyset$ when $-c_0<c<c_0;$
        \item $\{\underline{\lambda}=\overline{\lambda}\}$ for some $\underline{\lambda}=\overline{\lambda}>0$ when $c=c_0;$
        \item $\{\underline{\lambda},\overline{\lambda}\}$ for some $0<\underline{\lambda}<\overline{\lambda}$ when $c>c_0;$
    \end{enumerate}
    \item If $\underline{\lambda}<\overline{\lambda},$ then $\Lambda_1(\mathcal{H}_{\lambda,c}+P)<0$ for all $\lambda\in(\underline{\lambda},\overline{\lambda}).$
\end{enumerate}
\end{lemma}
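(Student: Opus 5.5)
The plan is to reduce everything to the scalar function
\[
\Phi(\lambda):=\Lambda_1(m(\lambda)\mathcal{D}+P),\qquad \lambda\in\mathbb{R},
\]
since $\Lambda_1(\mathcal{H}_{\lambda,c}+P)=\Phi(\lambda)-c\lambda$ (adding $-c\lambda I_n$ only shifts the spectrum). So $\Lambda(c)$ is exactly the set of intersections of the graph of $\Phi$ with the line $\lambda\mapsto c\lambda$. We first record the properties of $\Phi$.

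\emph{Evenness.} By (J2), $m(-\lambda)=m(\lambda)$, so $\Phi$ is even.

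\emph{Convexity and growth.} By (J1), $m$ is finite, smooth and strictly convex on $\mathbb{R}$ (since $m''(\lambda)=\int J s^2e^{-\lambda s}ds>0$), with $m(0)=0$ and $m\ge 0$ by Jensen's inequality and evenness.

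\emph{Convexity of $\Phi$.} This is the main obstacle: in general $\Lambda_1$ of a matrix family is not convex. The structure here is diagonal plus constant, and the diagonal entries $d_i m(\lambda)$ are convex in $\lambda$. I would invoke Cohen's theorem: the spectral bound of an essentially nonnegative matrix is a convex function of its diagonal. Composing with the convex, componentwise-nondecreasing map $\mathrm{diag}\mapsto\Lambda_1$ should then give convexity of $\Phi$. If $P$ is reducible, one can still argue via continuity, by perturbing $P$ to $P+\varepsilon\mathbf{1}\mathbf{1}^\top$, which is irreducible, and letting $\varepsilon\to 0$. I would also want strict convexity, or at least enough to rule out segments of the graph lying on a line through the origin. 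For this, I would use $d_{\min}m(\lambda)+\Lambda_1(P)\le\Phi(\lambda)\le d_{\max}m(\lambda)+\Lambda_1(P)$ together with the superlinear growth of $m$. This growth follows from (J1): $m(\lambda)/|\lambda|\to\infty$, because $J>0$ near $0$ but also has mass on both sides, so $\int J e^{|\lambda| |s|}\gtrsim e^{\varepsilon|\lambda|}$. Strict convexity should come from strict convexity of $m$: on any interval, $\Phi$ is the spectral bound with a Perron vector, and a variational characterization (e.g.\ Collatz--Wielandt with left and right Perron vectors $\phi,\psi$) gives $\Phi''\ge \phi^\top m''\mathcal{D}\psi/(\phi^\top\psi)>0$ where $\Phi$ is smooth, i.e.\ where $\Lambda_1$ is simple.

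With these properties, $\Phi$ is even, strictly convex, and superlinear, with $\Phi(0)=\Lambda_1(P)$.

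For part (a), $\Lambda_1(P)<0$ means the strictly convex function $\lambda\mapsto\Phi(\lambda)-c\lambda$ is negative at $0$ and tends to $+\infty$ at $\pm\infty$. Hence it has exactly one zero on each side of $0$.

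For part (b), $\Lambda_1(P)>0$ gives $\Phi(\lambda)/\lambda\to+\infty$ both as $\lambda\to0^+$ and as $\lambda\to\infty$, so the infimum $c_0$ is attained. Moreover $c_0>0$ because $\Phi\ge\Lambda_1(P)>0$. Then:
\begin{itemize}
\item For $c>c_0$, the convex function $\Phi(\lambda)-c\lambda$ is positive at $0$ and negative at the minimizer, so it has exactly two positive roots; it has no negative roots, since $\Phi>0\ge c\lambda$ there.
\item For $c=c_0$, the line is tangent to the graph, and strict convexity gives uniqueness of the tangency point.
\item For $|c|<c_0$, we have $\Phi(\lambda)>c_0|\lambda|\ge c\lambda$, so there are no roots.
\item The cases $c\le -c_0$ follow from the case $c\ge c_0$ by evenness, via $\lambda\mapsto-\lambda$.
\end{itemize}

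For part (c), convexity gives $\Phi(\lambda)-c\lambda<0$ strictly between two distinct zeros. Strict inequality holds because a convex function vanishing at two points and attaining $0$ at an interior point would be identically zero on the interval, which contradicts strict convexity.
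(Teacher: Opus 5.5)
Your plan follows the paper's proof. You reduce to $\Phi(\lambda)=\Lambda_1(m(\lambda)\mathcal{D}+P)$, get convexity from Cohen's theorem and superlinear growth from $\Phi\ge(\min_i d_i)\,m+\Lambda_1(P)$, and finish with elementary convex analysis. Your case analysis is more explicit than the paper's ``derived easily''. Two remarks on scope. First, Cohen's theorem holds for every essentially nonnegative matrix, so the $\varepsilon$-perturbation is not needed for convexity. Second, plain convexity already yields (a), (b)(i),(iii),(v) and (c): in (c), $h(\lambda)=\Phi(\lambda)-c\lambda$ is negative at some point of $(\underline{\lambda},\overline{\lambda})$, hence on the whole open interval. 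Strictness is needed only for uniqueness of the tangency point in (b)(ii),(iv).

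\textbf{The gap: strict convexity when $P$ is reducible.} Your bound $\Phi''\ge\phi^\top m''\mathcal{D}\psi/(\phi^\top\psi)$ is best justified by writing $\Phi=G\circ m$ with $G(t)=\Lambda_1(t\mathcal{D}+P)$ convex and $G'(t)=\phi^\top\mathcal{D}\psi/\phi^\top\psi$, so that $\Phi''=G''(m)(m')^2+G'(m)m''\ge G'(m)m''$.
\begin{itemize}
\item This bound is only available where $\Lambda_1$ is a simple eigenvalue. That covers every $\lambda$ when $P$ is irreducible, but the lemma allows reducible $P$.
\item For reducible $P$, $\Lambda_1(m(\lambda)\mathcal{D}+P)$ can be multiple for all $\lambda$ (e.g.\ $P=\mathcal{D}=I_n$). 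It can also have kinks (e.g.\ $P$ diagonal, where $\Phi=\max_i(d_im+P_{ii})$). Your argument says nothing at such points.
\item The two-sided bounds by $m$ cannot exclude an affine piece of $\Phi$.
\item Strict convexity is not preserved in the limit $\varepsilon\to0$.
\end{itemize}
The paper closes this with a splitting. With $\tilde d=\frac12\min_i d_i$,
\[
\Phi(\lambda)=\Lambda_1\big(m(\lambda)(\mathcal{D}-\tilde dI_n)+P\big)+\tilde d\,m(\lambda).
\]
The first term is convex, by Cohen's theorem plus monotonicity of $\Lambda_1$ in the nonnegative diagonal entries $(d_i-\tilde d)m(\lambda)$. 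The second term is strictly convex.

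Your own approach can be repaired the same way. Set $\Phi_\varepsilon(\lambda):=\Lambda_1(m(\lambda)\mathcal{D}+P+\varepsilon 1_n1_n^\top)$. Since $\phi^\top\mathcal{D}\psi\ge(\min_i d_i)\phi^\top\psi$, your bound gives $\Phi_\varepsilon''\ge(\min_i d_i)m''$ uniformly in $\varepsilon$. Hence $\Phi_\varepsilon-(\min_i d_i)m$ is convex, and this property does pass to the limit.

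A minor point: for $|c|<c_0$ you should write $\Phi(\lambda)\ge c_0|\lambda|$, not $>$, since equality holds at the minimizer of $\Phi(\lambda)/\lambda$. The conclusion survives because $c_0|\lambda|>c\lambda$ for $\lambda\neq0$ and $\Phi(0)>0$.
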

\begin{proof}
    Let 
    \begin{equation}\label{mu-def}
         \mu(\lambda):=\Lambda_1(\mathcal{H}_{\lambda,0}+P)=\Lambda_1(m(\lambda)\mathcal{D}+P),\quad \lambda\in\mathbb{R}.
    \end{equation}
     Since $J$ is even, we have $m(-\lambda)=m(\lambda)$. Furthermore, due to $\int_{\mathbb{R}}J(s)ds=1$, we have 
\begin{align}\label{m-form}
m(\lambda)=\int_{\mathbb{R}}J(s)\big(\cosh(\lambda s)-1\big)ds\geq0.   
\end{align}

Then, by (J1), we further have
\[
m''(\lambda)=\int_{\mathbb{R}}J(s)s^2e^{-\lambda s}ds>0.
\]
Therefore, $m$ is even and strictly convex on $\mathbb{R}$.

Set 
$$\widetilde{d}=\dfrac{1}{2}\min\{d_1,\dots,d_n\},\quad\widetilde{\Lambda}(\lambda)=\Lambda_1\Big(m(\lambda)\mathrm{diag}(d_1-\widetilde{d},\dots,d_n-\widetilde{d})+P\Big).$$
and let $F(t)=\Lambda_1\big(t\cdot \mathrm{diag}(d_1-\widetilde{d},\dots,d_n-\widetilde{d})+P\big)$, $t\in\mathbb{R}$. By the convexity property for the spectral bound of essentially nonnegative matrices \cite{cohen1981convexity}, $F(t)$ is convex.
In addition, $F(t)$ is nondecreasing in $t$.
Together with the fact that
$m$ is convex 
we see that  $\widetilde{\Lambda}$ is also convex.
Therefore, $\mu(\lambda)=\widetilde{\Lambda}(\lambda)+m(\lambda)\widetilde{d}$ is strictly convex.

In view of \eqref{m-form} and (J1), there exist $C>0$ and $\delta>0$ such that $m(\lambda)\geq C e^{\delta \lambda}$ for all large $\lambda$. Then 
$\lim_{\lambda\to\infty}\frac{m(\lambda)}{\lambda}=\infty$. Since $m(\lambda)\geq0$ and $\mathcal{D}\geq \widetilde{d} I_n$, the monotonicity of the spectral bound  gives
\[
\mu(\lambda)=
\Lambda_1(m(\lambda)\mathcal{D}+P)\geq
\Lambda_1(m(\lambda)\widetilde{d} I_n+P)
=\widetilde{d}m(\lambda)+\Lambda_1(P).
\]
This implies that 
\[
\mu(\lambda)\to\infty,\quad \frac{\mu(\lambda)}{\lambda}\to\infty\quad \text{ as } |\lambda|\to\infty.
\]
Now, for each fixed $c\in\mathbb{R}$, since we have  
$\Lambda_1(\mathcal{H}_{\lambda,c}+P)=\mu(\lambda)-c\lambda$, the function $\lambda\mapsto \Lambda_1(\mathcal{H}_{\lambda,c}+P)$
 is strictly convex, continuous, and satisfies
\[
\lim_{\lambda\to\pm\infty}\Lambda_1(\mathcal{H}_{\lambda,c}+P)=\infty.
\]
With these properties, the conclusion of Lemma~\ref{lmalambda1} can be derived easily. This completes the proof.
\end{proof}

\begin{remark}\label{rk:analytic}
If $P$ is irreducible in Lemma \ref{lmalambda1}, then $\mu$ is real analytic. Furthermore, we have $\mu''(\lambda)>0$ for all $\lambda\in\mathbb{R}$. Consequently, the unique real root corresponding to $c=\pm c_0$ has multiplicity two, and the two real roots corresponding to $|c|>c_0$ are simple.    
\end{remark}

The following proposition gives more qualitative properties of the solutions of the linear system \eqref{linearsystem1}, which will be used in the proof of Proposition \ref{lma:Persistence-Extinction}. 
This proposition is presented here, as its proof is independent of other sections.

\begin{proposition}\label{lma:increasing}
    Assume that $J$ satisfies {\rm (J0)--(J2)}.
    In the linear system \eqref{linearsystem1}, assume that $P$ is an essentially nonnegative irreducible matrix and $\Lambda_1(P)>0$.
    Also, define $c_0$ as in \eqref{generalc*}.    
    Fix $c\geq c_0$, and assume that $p(s)=(p_1(s),\dots,p_n(s))$ is a positive $\mathcal{C}^1$ solution of the linear system  \eqref{linearsystem1}.
    Then $p_i$ is strictly increasing on $\mathbb{R}$ for each $i\in[n]$ and $p(-\infty)=0_n$.
    Furthermore, we have
     \begin{align*}
        \lim\limits_{s\to-\infty}\int_{\mathbb{R}}J(s-s')p(s')ds'=0_n,\quad p'(-\infty)=0_n.
    \end{align*}
\end{proposition}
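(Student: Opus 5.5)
The plan is to reuse the rescaling argument from the proof of Proposition~\ref{linearexp} to obtain a uniform positive lower bound on the logarithmic derivatives $q_i:=p_i'/p_i$. Since $c\geq c_0>0$ by Lemma~\ref{lmalambda1}(b), we are in the case $c>0$ treated there. Following Steps~1--3 of that proof, set
\[
\lambda:=\min_{i\in[n]}\inf_{s\in\mathbb{R}}q_i(s).
\]
Step~1 shows $\lambda$ is finite. Steps~2--3 show that this particular $\lambda$ is the exponent of a positive exponential solution $e^{\lambda s}\psi$ of \eqref{linearsystem1}, so that $\Lambda_1(\mathcal{H}_{\lambda,c}+P)=0$, i.e.\ $\lambda\in\Lambda(c)$. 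The point to stress is that the proof of Proposition~\ref{linearexp} identifies the exponent with this infimum, not merely with some root; this identification is the step I expect to carry the real content. Since $c\geq c_0$, Lemma~\ref{lmalambda1}(b)(iv)--(v) gives that every element of $\Lambda(c)$ is strictly positive. Hence $\lambda>0$.

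It follows that $p_i'(s)\geq \lambda p_i(s)>0$ for all $s\in\mathbb{R}$ and $i\in[n]$, so each $p_i$ is strictly increasing. Integrating the differential inequality on $[s,0]$ for $s<0$ gives $p_i(s)\leq p_i(0)e^{\lambda s}$, and therefore $p(-\infty)=0_n$.

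For the convolution term I will use Corollary~\ref{lmalinear}(b) (applicable since $c\neq0$) centered at $s$ rather than at $0$. There is an $M>0$ with $|p(s')|_1\leq |p(s)|_1e^{M|s'-s|}$ for all $s,s'\in\mathbb{R}$. Since $p$ is positive, this gives
\[
\Big|\int_{\mathbb{R}}J(s-s')p(s')ds'\Big|_1\leq |p(s)|_1\int_{\mathbb{R}}J(\eta)e^{M|\eta|}d\eta = M_0\,|p(s)|_1 .
\]
Here $M_0<\infty$ by (J1), so the right-hand side tends to $0$ as $s\to-\infty$. This avoids any splitting of the integral and any knowledge of the growth of $p$ at $+\infty$.

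Finally, from the equation,
\[
cp'(s)=\mathcal{D}\Big(\int_{\mathbb{R}}J(s-s')p(s')ds'-p(s)\Big)+Pp(s).
\]
Each term on the right tends to $0_n$ as $s\to-\infty$, and $c>0$, so $p'(-\infty)=0_n$.
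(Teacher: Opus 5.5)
Your proposal is correct and follows the paper's proof. You identify $\lambda=\min_i\inf_s p_i'/p_i$ as a root of $\Lambda_1(\mathcal{H}_{\lambda,c}+P)=0$ via the proof of Proposition~\ref{linearexp}, get $\lambda>0$ from Lemma~\ref{lmalambda1}(b), and then obtain monotonicity, $p(-\infty)=0_n$ and $p'(-\infty)=0_n$ as the paper does. The only difference is the convolution limit: you center the Harnack bound at $s$ to get $\bigl|\int_{\mathbb{R}}J(s-s')p(s')ds'\bigr|_1\le M_0|p(s)|_1$ in one line, whereas the paper splits the integral into a tail controlled by a one-sided Harnack bound with (J1) and a remaining part controlled by $p(-\infty)=0_n$; your version is slightly cleaner and equally valid under (J1).
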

\begin{proof}
    Recall from the proof of Proposition \ref{linearexp} that the quantity
    \begin{align*}
        \lambda:=\min_{i\in[n]}\inf_{s\in\mathbb{R}}\frac{p_i'(s)}{p_i(s)}
    \end{align*}
    is a finite number satisfying $\Lambda_1(\mathcal{H}_{\lambda,c}+P)=0$ (where $\mathcal{H}_{\lambda,c}$ is defined as in \eqref{defHlambdaclinear}).
    Since $c\geq c_0$, Lemma \ref{lmalambda1} shows that $\lambda>0.$
    Then for each $i\in[n]$ and each $s\in\mathbb{R},$ we have $p_i'(s)\geq\lambda p_i(s)>0$. Thus, each $p_i$ is strictly increasing on $\mathbb{R}.$ Moreover, we have 
    $p_i(s)\leq p_i(0) e^{\lambda s}\mbox{ for }s\le 0 \mbox{ and }i\in[n].$
    Hence, 
    $p(-\infty)=0_n$.

    For the second assertion, Corollary \ref{lmalinear} (b) shows that there is a constant $M\in(0,\infty)$ such that
      \begin{align*}
          |p(s-s')|_1\leq e^{-Ms'}|p(s)|_1\mbox{ for all }s\in\mathbb{R}\mbox{ and }s'\leq 0.
      \end{align*}
      For any $\varepsilon>0$, choose, by (J1), an $L>0$ such that
      \begin{align*}
          \int_{-\infty}^{-L}J(s')e^{-Ms'}ds'<\varepsilon.
      \end{align*}
      Then for all $s< 0,$ we have
       \begin{align}\label{-inftytoL}
          \int_{-\infty}^{-L} J(s')|p(s-s')|_1ds'\leq \int_{-\infty}^{-L}J(s')e^{-Ms'}|p(s)|_1ds'< |p(0)|_1\varepsilon,
      \end{align}
      where we use $|p(s)|_1<|p(0)|_1$ for all $s<0$ in the last inequality.
      Since $p(-\infty)=0_n,$  we can assume that
     $|p(s)|_1<\varepsilon\mbox{ for all }s\leq -L.$
     This implies $|p(s-s')|_1<\varepsilon$ for all $s\le -2L$ and $s'\ge -L.$
     Then 
      \begin{align}\label{Ltoinfty}
          \int_{-L}^\infty J(s')|p(s-s')|_1ds'<\varepsilon\mbox{ for all }s\leq -2L.
      \end{align}
      Combining \eqref{-inftytoL} and \eqref{Ltoinfty},  we conclude that $\lim\limits_{s\to-\infty}\int_{\mathbb{R}}J(s')p(s-s')ds'=0_n.$

     Finally, we prove $p'(-\infty)=0_n$. To see this, note that $\Lambda_1(P)>0$ implies $c_0>0$. Hence $c\geq c_0>0$. Taking $s\to-\infty$ in \eqref{linearsystem1}, together with the facts that $c\neq 0$, 
     $\lim\limits_{s\to-\infty}\int_{\mathbb{R}}J(s')p(s-s')ds'=0_n$ and $Pp(-\infty)=0$,
     we immediately obtain  $p'(-\infty)=0_n$. This completes the proof.
\end{proof}
\section{Existence of traveling waves}\label{sec:existence and non}
In the rest of this paper, we will assume that {\rm (C1)--(C4)} hold without any further mention.
In this section, we are going to study properties of solutions of \eqref{epidemictws}, and in particular, we prove Theorem \ref{thm1}.

\subsection{Qualitative properties of the solutions}
First, by applying the maximum principles,  we have the following positivity result, whose proof is put in subsection \ref{sec:proofoflmapositive}.

\begin{lemma}\label{lmapositive}
 Assume that $J$ satisfies {\rm{(J0)}} and that $D_vg(E_0)$ is irreducible. 
 Let $c\in\mathbb{R}$. 
 Also, assume that $(U,V)$ is a bounded nonnegative solution of \eqref{epidemictws} on $\mathbb{R}$. Then the following assertions hold.
    \begin{enumerate}[(a)]
        \item $U(s)\leq K$ for all $s\in\mathbb{R}.$ 
        \item Either $(U,V)$ is positive on $\mathbb{R}$  or $(U,V)\equiv(K,0_n)$ on $\mathbb{R}.$
    \end{enumerate}
\end{lemma}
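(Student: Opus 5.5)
Part (a) and part (b) are proved separately, each by applying the maximum principles of Section~\ref{Preparatory lemmas} to the traveling-wave variables written as functions of $(x,t)$ through $s=x+ct$.

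For (a) we set $W:=K-U$. Using $f(U)=\delta(K-U)$, the $U$-equation becomes
\[
cW'(s)=d_0\Big(\int_{\mathbb{R}}J(s-s')W(s')ds'-W(s)\Big)-\delta W(s)+g_0(U(s),V(s)).
\]
By (C1) and $V\ge 0$ we have $g_0(U,V)=U\sum_{i,j}g^0_{ij}(V_j)\ge 0$, since $U\ge0$. Hence $W$ satisfies the scalar inequality $cW'\ge d_0(J*W-W)-\delta W$. To conclude, we pass to $w(x,t):=W(x+ct)$, which gives $w_t\ge d_0(J*w-w)-\delta w$ on $\mathbb{R}\times(0,T]$. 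We cannot apply Proposition~\ref{maxprinciple} directly, since we do not know that $w(\cdot,0)\ge 0$. Instead we argue directly on $\mathbb{R}$ with the bounded function $W$, using the following sliding/infimum argument. Let $m:=\inf W$, which is finite because $U$ is bounded, and suppose $m<0$. Take $s_k$ with $W(s_k)\to m$ and translate: $W_k:=W(\cdot+s_k)$. The equation bounds $W_k'$, so the $W_k$ converge locally uniformly along a subsequence to $W_\infty$ with $W_\infty(0)=m=\min W_\infty$. Passing to the limit in the integrated (mild) form of the inequality, or equivalently applying Fatou to the convolution, gives a contradiction. If $W_\infty$ is differentiable at $0$, then $W_\infty'(0)=0$, while the right-hand side satisfies $d_0(J*W_\infty-m)-\delta m\ge -\delta m>0$, which is impossible.

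An alternative that avoids limits uses the exponential trick of Proposition~\ref{maxprinciple}. Along the characteristic $t\mapsto x+ct$, the inequality reads $\tfrac{d}{dt}\big(e^{(d_0+\delta)t}w\big)\ge e^{(d_0+\delta)t}d_0\,J*w$. Starting at time $-T$ and using boundedness, one gets
\[
w\ge -\|W\|_\infty e^{-(d_0+\delta)T}+(\text{nonpositive convolution term}).
\]
Iterating this bound shows that $\inf W\ge \frac{d_0}{d_0+\delta}\inf W$, which forces $\inf W\ge 0$. This second route is the one I would write up, because it handles $c=0$ and $c\neq0$ uniformly. For $c=0$ the equation is algebraic in $s$: at a near-infimum point $s$ one has $(d_0+\delta)W(s)\ge d_0 J*W(s)\ge d_0\inf W$, and taking the infimum over $s$ gives the same inequality directly.

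For (b), with $U\le K$ in hand, we consider $V$. By (C1), (C2) and (C4), $g^0_{ij}(V_j)\ge 0$, and $m_{ij}\ge0$ for $i\ne j$. So $V$ satisfies the cooperative linear inequality
\[
cV_i'\ge d_i(J*V_i-V_i)+\sum_j c_{ij}(s)V_j,
\]
where $c_{ij}(s)=U(s)\,g^0_{ij}(V_j)/V_j+m_{ij}$ (interpreted via the derivative when $V_j=0$) for $i\neq j$, and $c_{ii}$ is chosen similarly. This choice of coefficients does not immediately give irreducibility. It is more convenient to use instead that the incidence term is at least $U\,g_{ij}^0(V_j)$ and to split into cases according to whether $U$ vanishes somewhere.

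First case: $U>0$ everywhere. Concavity (C4) gives $g^0_{ij}(V_j)\ge \kappa_{ij}V_j$ on the bounded range of $V$, where $\kappa_{ij}>0$ exactly when $(g^0_{ij})'(0)>0$; this uses (C3). Then the matrix $[U(s)\kappa_{ij}+m_{ij}]$ has the same off-diagonal zero pattern as $D_vg(E_0)$, hence is irreducible wherever $U(s)>0$. Lemma~\ref{SMPlinearized}, in its Proposition~\ref{SMP} form applied at a point, then shows that $V$ is either positive or identically zero.

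Second case: $U$ vanishes somewhere. Then the $U$-equation at a minimum point $s_0$ gives $0=cU'(s_0)\ge d_0J*U(s_0)+\delta K>0$, a contradiction. So $U>0$ always, which means the first case is the only one. The same computation with $U\ge0$ shows $U>0$ directly.

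Finally, if $V\equiv0$, then $U$ solves the scalar linear problem $cU'=d_0(J*U-U)+\delta(K-U)$ with $U$ bounded. The bounded function $K-U$ then satisfies the linear equation with a damping term, and the infimum/supremum argument of part (a), applied to $\pm(K-U)$, forces $U\equiv K$.

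The main obstacle is part (a) on the whole line without initial data, that is, making the maximum principle work for bounded solutions. The weak maximum principle stated in the paper needs a nonnegative initial time slice, which we do not have, so I will rely on the exponential-damping iteration argument described above.
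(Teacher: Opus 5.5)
Your proof is correct. Its skeleton matches the paper's: an extremum argument for $U\le K$, Lemma~\ref{SMPlinearized} for the dichotomy on $V$, and an extremum argument showing that $V\equiv0_n$ forces $U\equiv K$. The individual steps, however, use different tools.

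\textbf{Part (a).} The paper applies the fluctuation lemma to $\sup U$. Along $s_k$ with $U(s_k)\to\sup U$ and $U'(s_k)\to0$, the equation gives $0\le\delta(K-\sup U)$ in two lines. The same device at $\inf U$ handles the case $V\equiv0_n$. Your backward variation of constants,
$W(x)\ge e^{-(d_0+\delta)T}W(x-cT)+\frac{d_0}{d_0+\delta}\big(1-e^{-(d_0+\delta)T}\big)\inf W$ followed by $T\to\infty$,
is self-contained and avoids the fluctuation lemma. It is simply the exponential trick of Proposition~\ref{maxprinciple} run on the whole line, at the cost of being slightly longer. Two small corrections: no iteration is needed, and the convolution term is bounded below by $\frac{d_0}{d_0+\delta}\inf W$ rather than being nonpositive.

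\textbf{Positivity of $U$.} The paper uses the scalar strong maximum principle and then excludes $U\equiv0$ via $\delta K>0$. Your evaluation at a zero $s_0$ of $U$, which gives $0=d_0\int_{\mathbb{R}}J(s_0-s')U(s')ds'+\delta K$, reaches the same conclusion more directly.

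\textbf{The $V$-dichotomy.} The paper rewrites the equation exactly, with coefficients $U(s)g^0_{ij}(V_j(s))/V_j(s)+m_{ij}$, extended by $(g^0_{ij})'(0)$ where $V_j=0$. Once $U>0$, these are irreducible by (C3), so, contrary to your remark, that choice does give irreducibility immediately. Your constant lower bound $g^0_{ij}(V_j)\ge\kappa_{ij}V_j$ is equally valid, since Lemma~\ref{SMPlinearized} only needs a differential inequality. It does, however, use the concavity (C4), which the paper's version does not need at this step.
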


By utilizing our $\ell^1$-norm Harnack-type inequality (Theorem \ref{generalizedHarnack}) and the rescaling method, we have the following Lemmas \ref{nonlinearHarnack} and \ref{nonlinearexp}.

\begin{lemma}\label{nonlinearHarnack}
Assume that $J$ satisfies {\rm{(J0)}} and 
that $c$ is a nonzero constant. 
Let $(U,V)$ be a bounded positive solution of \eqref{epidemictws}.
Then there exists a constant $M>0$, depending only on
$J$, $c$, $D$, $D_vg(E_0)$ and $\mathbb{M}$, such that the following statements hold.
    \begin{enumerate}[(a)]
        \item For all $s\in\mathbb{R}$,
        \begin{align*}
            \frac{1}{|V(s)|_1}\left|\sum_{i=1}^nV_i'(s)\right|\leq M.
        \end{align*}
        \item For any $r>0,$ 
        \begin{align*}
            e^{-M r}\leq\frac{|V(\xi+s)|_1}{|V(\xi)|_1}\leq e^{M r}\mbox{ for all }\xi\in\mathbb{R}\mbox{ and }s\in[-r,r].
        \end{align*}
    \end{enumerate}  
\end{lemma}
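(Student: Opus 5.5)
The plan is to apply Theorem \ref{generalizedHarnack} to $p:=V$, the infected part of the solution. The key point is that the $V$-equations in \eqref{epidemictws} can be sandwiched between two linear systems with constant matrices. Writing the $V$-equation as
\[
cV'(s)-D\Big(\int_{\mathbb{R}}J(s-s')V(s')ds'-V(s)\Big)=g(U(s),V(s)),
\]
it suffices to find $n\times n$ matrices $P_1,P_2$, depending only on $D_vg(E_0)$ and $\mathbb{M}$, such that componentwise
\[
P_1V(s)\le g(U(s),V(s))\le P_2V(s),\quad s\in\mathbb{R}.
\]
Here $\mathcal{D}(s)\equiv D$ is constant, so $d_{\min}=\min_i d_i$ and $d_{\max}=\max_i d_i$. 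Positivity and $\mathcal{C}^1$ regularity of $V$ are given, since $(U,V)$ is a positive solution and $c\neq0$ (from the equation, $V'$ is continuous).

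The bounds come from the structure $g_i(U,V)=U\sum_j g_{ij}^0(V_j)+\sum_j m_{ij}V_j$. By Lemma \ref{lmapositive}(a) we have $0<U\le K$. By (C1), (C2) and (C4), each $g_{ij}^0$ is concave on $[0,\infty)$ with $g_{ij}^0(0)=0$. Hence for $\xi\ge0$,
\[
0\le g_{ij}^0(\xi)\le (g_{ij}^0)'(0)\,\xi .
\]
It follows that
\[
\sum_j m_{ij}V_j\;\le\; g_i(U,V)\;\le\;\sum_j\big(K(g_{ij}^0)'(0)+m_{ij}\big)V_j .
\]
So I take $P_1=\mathbb{M}$ and $P_2=D_vg(E_0)$, both of which depend only on the allowed data.

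Theorem \ref{generalizedHarnack} then yields a constant $M$ depending only on $J$, $c$, $d_{\max}$, $d_{\min}$, $\mathbb{M}$ and $D_vg(E_0)$, and gives statements (a) and (b) directly with $p=V$.

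The only delicate point is the upper bound on the incidence term. It uses that $U\le K$ and that concavity gives a linear bound valid for all $V_j\ge0$, so it does not depend on the size of $V$. The lower bound uses only the nonnegativity of $g_{ij}^0$ together with $U>0$. Boundedness of $(U,V)$ is needed only to invoke Lemma \ref{lmapositive}. No irreducibility is needed, consistent with Theorem \ref{generalizedHarnack}.
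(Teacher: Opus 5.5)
Your proposal is correct and matches the paper's proof: the paper also uses $U\le K$ from Lemma \ref{lmapositive}(a) with (C2) and (C4) to get $g(U,V)\le D_vg(E_0)V$, sandwiches the $V$-equation between $\mathbb{M}V$ and $D_vg(E_0)V$, and applies Theorem \ref{generalizedHarnack} with $P_1=\mathbb{M}$, $P_2=D_vg(E_0)$. Your justification of the lower bound via (C1) and $U>0$ spells out a step the paper leaves implicit.
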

\begin{proof}
   By  Lemma \ref{lmapositive} (a) and assumptions (C2) and (C4), $g(U(s),V(s))\leq PV(s)$ for all $s\in\mathbb{R},$ where $P=D_vg(E_0).$ Then
    \begin{align*}
        \mathbb{M}V(s)\leq cV'(s)-D\bigg(\int_{\mathbb{R}}J(s-s')V(s')ds'-V(s)\bigg)\leq PV(s),\quad s\in\mathbb{R}.
    \end{align*}
    Thus, the results follow directly from Theorem \ref{generalizedHarnack}.
\end{proof}

\begin{lemma}\label{nonlinearexp}
Assume that $J$ satisfies {\rm{(J0)}}  and {\rm (J1)}. 
Also, assume that $D_vg(E_0)$ is irreducible. Let $c$ be a nonzero constant.
   If \eqref{epidemictws} admits a bounded positive solution $(U,V)$  on $\mathbb{R}$ satisfying $(U,V)(-\infty)=E_0$, then there is a $\lambda\in\mathbb{R}$ such that $\Lambda_1(H_{\lambda,c}+D_vg(E_0))=0$, where $H_{\lambda,c}$ is defined as in \eqref{defofmlambda}.
\end{lemma}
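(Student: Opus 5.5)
The plan is a rescaling argument: push $V$ toward $s=-\infty$, where $U\to K$ and $V\to 0_n$, so that the equation for $V$ becomes the linear system \eqref{linearsystem1} with $\mathcal{D}=D$ and $P=D_vg(E_0)$. Proposition \ref{linearexp} will then supply the exponential solution and hence the characteristic root.

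First I rewrite the $V$-equation as
\[
cV'(s)=D\Big(\int_{\mathbb{R}}J(s-s')V(s')ds'-V(s)\Big)+P(s)V(s),
\qquad
P_{ij}(s):=U(s)\,\frac{g^0_{ij}(V_j(s))}{V_j(s)}+m_{ij},
\]
which is legitimate because $V_j>0$. By (C1)--(C4) and Lemma \ref{lmapositive}(a), $P(s)$ is bounded. Since $U(s)\to K$ and $V(s)\to0_n$ as $s\to-\infty$, and $g^0_{ij}$ is $\mathcal C^2$ with $g^0_{ij}(0)=0$, we get $P(s)\to D_vg(E_0)$ as $s\to-\infty$.

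Next I take any sequence $s_k\to-\infty$ and set $\widehat V^{(k)}(s):=V(s+s_k)/|V(s_k)|_1$, so that $|\widehat V^{(k)}(0)|_1=1$. Lemma \ref{nonlinearHarnack}(b) gives the uniform bound $|\widehat V^{(k)}(s)|_1\le e^{M|s|}$ with $M$ independent of $k$. Because the $\widehat V^{(k)}$ are positive, this bounds every component. The compactness argument of Step 2 in the proof of Proposition \ref{linearexp} then goes through with (J1):
\begin{itemize}
\item the equation and the bound on $\int J\widehat V^{(k)}$ give locally uniform bounds on $\widehat V^{(k)}$ and its derivative;
\item the difference estimate on the nonlocal term gives equicontinuity of the derivatives;
\item Arzel\`a--Ascoli gives $\widehat V^{(k)}\to p^\infty$ in $\mathcal C^1_{\rm loc}$;
\item dominated convergence, using $J(s')e^{M|s'|}\in L^1$, passes to the limit in the convolution.
\end{itemize}
Since $P(s+s_k)\to D_vg(E_0)$ locally uniformly, $p^\infty$ is a nonnegative solution of \eqref{linearsystem1} with $P=D_vg(E_0)$ and $|p^\infty(0)|_1=1$.

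Because $D_vg(E_0)$ is essentially nonnegative and irreducible (the $m_{ij}$, $i\ne j$, are nonnegative and $(g^0_{ij})'(0)\ge0$), Lemma \ref{SMPlinearized} shows that $p^\infty$ is positive on $\mathbb{R}$. Proposition \ref{linearexp}, applied with $c\neq0$, then produces $\lambda$ with $\Lambda_1(H_{\lambda,c}+D_vg(E_0))=0$. Here $H_{\lambda,c}=\mathcal{H}_{\lambda,c}$ when $\mathcal D=D$.

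The main obstacle is the uniform control needed to pass to the limit, namely bounds on each component rather than only on the total. Lemma \ref{nonlinearHarnack} handles this because it is stated in terms of the $\ell^1$ norm and the components are positive. The other delicate point is the convergence of the coefficient $P(s+s_k)$, which needs the difference quotient $g^0_{ij}(V_j)/V_j\to (g^0_{ij})'(0)$ uniformly on compact sets; this holds because $V_j(s+s_k)\to0$ uniformly on compacts as $k\to\infty$.
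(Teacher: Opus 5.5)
Your proof is correct, but it is organized differently from the paper's.

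\textbf{Your route.} You take an arbitrary sequence $s_k\to-\infty$. Lemma \ref{nonlinearHarnack}(b) and the compactness of Step 2 of Proposition \ref{linearexp} then give \emph{some} positive entire solution $p^\infty$ of the linearized system. You then apply Proposition \ref{linearexp} to $p^\infty$ as a black box.

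\textbf{The paper's route.} The paper works with the specific quantity $\lambda_*:=\min_i\liminf_{s\to-\infty}V_i'(s)/V_i(s)$. It proves this is finite: the lower bound comes from $cV_i'\ge (m_{ii}-d_i)V_i$, and the upper bound from Lemma \ref{nonlinearHarnack}(a). It then picks $s_k$ realizing $\lambda_*$ and reruns Steps 2--3 of Proposition \ref{linearexp} on the rescaled nonlinear sequence. This shows the limit is exactly $e^{\lambda_* s}\psi$.

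\textbf{Comparison.} Your route is shorter and modular, and it needs only part (b) of the Harnack lemma. The paper's route buys the identification of the characteristic root with the lower logarithmic decay rate of $V$ at $-\infty$. That identification is used later: Lemma \ref{lem:for-Laplace} explicitly recalls ``from the proof'' of this lemma that $\lambda_*$ is a root. It then concludes $\lambda_*\ge\lambda_c$ and hence $V(s)=O(e^{(\lambda_c-\varepsilon)s})$. Your root is $\min_i\inf_s (p^\infty_i)'/p^\infty_i$, which by construction is only $\ge \lambda_*$, so it gives no such decay bound. For the statement as posed, however, existence of a root is all that is required, and your argument delivers it.

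\textbf{Small point.} In your compactness step, equicontinuity of the derivatives also requires control of the variable-coefficient term $P(s+s_k)\widehat V^{(k)}(s)$. This is harmless: $P(\cdot+s_k)\to D_vg(E_0)$ locally uniformly, or alternatively $P$ is globally Lipschitz because $g^0_{ij}(\xi)/\xi=\int_0^1 (g^0_{ij})'(t\xi)\,dt$ with $g^0_{ij}\in\mathcal C^2$. You can also avoid the issue by passing to the limit in the integrated equation.
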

\begin{proof}
    First, we assume that $c>0$. We shall claim that the quantity
    $$\lambda:=\min_{i\in[n]}\liminf_{s\to-\infty}\frac{V_i'(s)}{V_i(s)}$$
     is finite.
     To see this, note that for each $i\in[n]$, 
    \begin{align*}
    cV_i'(s)\geq d_i\int_{\R}J(s-s')V_i(s')ds'-d_iV_i(s)+\sum_{j=1}^nm_{ij}V_j(s)\geq -d_i V_i(s)+m_{ii} V_i(s),\quad s\in\R,  
    \end{align*}
    which implies that
    $V_i'\geq-\mu V_i$ on $\mathbb{R}$,
    where $\mu=\max\limits_{i\in[n]}\left(\frac{d_i+|m_{ii}|}{c}\right)$.
    This implies $\frac{V_i'}{V_i}\geq -\mu$ on $\mathbb{R}$ for each $i\in[n]$,  
    and thus $\lambda>-\infty.$ 
    Now, suppose that $\lambda=\infty.$ 
    Then there is an $s^{*}< -1$ such that 
    \begin{align*}
    \min_{i\in[n]}\frac{V_i'(s)}{V_i(s)}>M>0\quad \text{ for all } s\leq s^{*},
    \end{align*}
    where $M$ is the constant chosen in Lemma \ref{nonlinearHarnack} (a). This implies that 
    \begin{align*}
    \frac{1}{|V(s)|_1}\left|\sum_{i=1}^nV_i'(s)\right|=\frac{\sum_{i=1}^nV_i'(s)}{\sum_{i=1}^nV_i(s)}\geq \min_{i\in[n]}\frac{V_i'(s)}{V_i(s)}>M\quad \mbox{ for any }s \leq s^{*},
    \end{align*}
   a contradiction to Lemma \ref{nonlinearHarnack} (a).
     
    Thus, there are  a $j\in[n]$ and a sequence $s_k\to -\infty$ such that 
    $$\lambda=\lim_{k\to\infty}\frac{V_j'(s_k)}{V_j(s_k)}.$$
    Furthermore, in view of Lemma \ref{nonlinearHarnack} (b) (see also the proof of Proposition \ref{linearexp}), since $(U,V)(s_k)\to E_0$ as $k\to\infty$, the sequence 
    $$\widehat{p}^{(k)}(s):=\frac{V(s+s_k)}{|V(s_k)|_1},\quad s\in\mathbb{R},$$
    converges in $\mathcal{C}^1_{\mathrm{loc}}(\mathbb{R})$, up to a diagonal extraction, to a positive solution $p^\infty$ of the linear system
    \begin{align*}
        c(p^\infty)'(s)=D\left(\int_{\mathbb{R}}J(s-s')p^\infty(s')ds'-p^\infty(s)\right)+D_vg(E_0)p^\infty(s),\quad s\in\mathbb{R}.
    \end{align*}
    Proceeding as in Steps 2 and 3 of the proof of Proposition \ref{linearexp},
    we can show that this $p^\infty$ actually has the exact form $p^\infty(s)=e^{\lambda s}\psi,$ where $\psi$ is a positive unit vector. 
    Then we may deduce that $\Lambda_1(H_{\lambda,c}+D_vg(E_0))=0,$ as desired. The case $c<0$ can be done similarly by considering $\max\limits_{i\in[n]}\limsup\limits_{s\to-\infty}\frac{V_i'(s)}{V_i(s)}.$
\end{proof}

\begin{remark}
    Unlike the proof of Proposition \ref{linearexp}, we consider $\min\limits_{i\in[n]}\liminf\limits_{s\to-\infty}\frac{V_i'(s)}{V_i(s)}$ instead of $\min\limits_{i\in[n]}\inf\limits_{s\in\mathbb{R}}\frac{V_i'(s)}{V_i(s)}$.
    This is because for a positive solution $(U,V)$ of the nonlinear system \eqref{epidemictws} satisfying $(U,V)(-\infty)=E_0=(K,0_n)$, we need to extract a sequence $\{s_k\}$ such that $\frac{V(\cdot+s_k)}{|V(s_k)|_1}$ converges to a solution of the linearized system, which needs $(U,V)(s_k)\to E_0$ as $k\to\infty$.
\end{remark}

The following proposition gives a basic property of the propagation direction of traveling waves connecting $E_0$ at $-\infty$. In fact, when the linearized infected subsystem at $E_0$ is unstable, namely, $\Lambda_1(D_vg(E_0))>0$, a bounded positive wave cannot move to the right.
Therefore, any such wave must have $c>0$.

\begin{proposition}[Direction of propagation]\label{lmacpositive}
    Assume that $J$ satisfies {\rm (J0)--(J2)}.
    Let $c\in\mathbb{R}$ and assume that $D_vg(E_0)$ is irreducible with $\Lambda_1(D_vg(E_0))>0$. 
    If  \eqref{epidemictws} admits a bounded positive solution $(U,V)$ on $\mathbb{R}$ satisfying $(U,V)(-\infty)=E_0$, then $c>0.$
\end{proposition}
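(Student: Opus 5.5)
We argue by contradiction and split into the cases $c<0$ and $c=0$. Throughout, let $(U,V)$ be a bounded positive solution of \eqref{epidemictws} with $(U,V)(-\infty)=E_0$, and write $P:=D_vg(E_0)$. By assumption $P$ is essentially nonnegative, irreducible, and satisfies $\Lambda_1(P)>0$, so Lemma \ref{lmalambda1}(b) applies with $\mathcal{D}=D$ and gives a threshold $c_0=c^*>0$.

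\textbf{Case $c<0$.} Here I would use Lemma \ref{nonlinearexp}, or more precisely its proof in the case $c<0$. The quantity
\[
\lambda:=\max_{i\in[n]}\limsup_{s\to-\infty}\frac{V_i'(s)}{V_i(s)}
\]
is finite, and $\Lambda_1(H_{\lambda,c}+P)=0$. By Lemma \ref{lmalambda1}(b), no such real root exists when $-c_0<c<0$, which is already a contradiction. When $c\le -c_0$, all real roots are negative, so $\lambda<0$. Then there is $s_0$ with $V_i'(s)\le \tfrac{\lambda}{2}V_i(s)<0$ for all $s\le s_0$ and all $i$. Hence each $V_i$ is strictly decreasing on $(-\infty,s_0]$, so $V_i(s)\ge V_i(s_0)>0$ for $s\le s_0$. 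This contradicts $V(-\infty)=0_n$. (The finiteness of $\lambda$ and the identification of the limit as $e^{\lambda s}\psi$ are taken verbatim from the $c<0$ version of the proofs of Proposition \ref{linearexp} and Lemma \ref{nonlinearexp}.)

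\textbf{Case $c=0$.} Theorem \ref{generalizedHarnack} and Lemma \ref{nonlinearexp} both require $c\neq0$, so I would argue directly.

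Step 1 (linear lower bound near $-\infty$). Since $U(s)\to K$, $V(s)\to0_n$, and each $g^0_{ij}$ is $\mathcal{C}^2$ and concave by (C4), for every $\varepsilon>0$ there is $s_\varepsilon$ such that
\[
g(U,V)(s)\ge (P-\varepsilon I_n)V(s)\quad\text{for } s\le s_\varepsilon .
\]
Combined with the equation at $c=0$, this gives
\[
0\ge D\bigl(J*V-V\bigr)+(P-\varepsilon I_n)V \quad\text{on } (-\infty,s_\varepsilon].
\]

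Step 2 (reduction to a scalar inequality). Multiply by $D^{-1}$ and take $\phi^\top$, the left positive Perron eigenvector of the irreducible, essentially nonnegative matrix $D^{-1}P$. Let $\kappa$ be its Perron eigenvalue. Since $-P$ is a nonsingular M-matrix exactly when $-D^{-1}P$ is, the sign of the spectral bound is preserved, so $\kappa>0$. Setting $w:=\phi^\top V>0$ and choosing $\varepsilon$ small, we obtain for some $\kappa'>0$
\[
w-J*w\ge \kappa' w \quad\text{on } (-\infty,s_\varepsilon],
\]
with $w$ bounded and $w(-\infty)=0$.

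Step 3 (integration, the main obstacle). I would integrate this inequality over $[a,x]$ with $a<x\le s_\varepsilon$. By Fubini, the symmetry (J2), and the moment bound from (J1), the integral of $w-J*w$ reduces to boundary terms. These are bounded by $\int_{\mathbb{R}}J(y)|y|\,dy$ times $\sup_{(-\infty,x]}w$ plus $\sup w$ near $a$. Letting $a\to-\infty$ gives
\[
\kappa'\int_{-\infty}^x w \le C\sup_{(-\infty,x]}w .
\]
In particular $w\in L^1(-\infty,x)$, since otherwise the left side is infinite. To reach a contradiction, I would take "record points" $x_k\to-\infty$ with $w(x_k)=\max_{(-\infty,x_k]}w$. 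At such points, I would first try to use the pointwise form of the inequality, together with the fact that $J>0$ near $0$, to get a lower bound $\int_{x_k-h}^{x_k}w\ge \theta\, w(x_k)$ with $\theta>0$ uniform in $k$. If the ratio $\theta/h$ can be made large compared with $C/\kappa'$, for instance by first shrinking the kernel contribution through a rescaled test function $e^{\eta s}$ with small $\eta>0$ in place of the bare integral, this contradicts the displayed estimate. Getting this uniform local lower bound without a Harnack inequality at $c=0$ is the step I expect to be hardest. If it fails, I would fall back on testing the inequality against $e^{\eta s}$ and using the bilateral Laplace transform. This gives $\bigl(1-\int_{\mathbb{R}} J(y)e^{-\eta y}\,dy+\kappa'\bigr)\widehat{w}(\eta)\le$ (boundary terms), and for $\eta$ small the coefficient is positive; this should be combined with a sign argument on the boundary terms.
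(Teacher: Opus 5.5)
Your case $c<0$ is correct, though it differs from the paper. The $c<0$ version of Lemma~\ref{nonlinearexp}, combined with Lemma~\ref{lmalambda1}(b), leaves either no real root or a root $\lambda<0$. Having $\limsup_{s\to-\infty}V_i'/V_i\le\lambda<0$ for every $i$ is then incompatible with $V(-\infty)=0_n$.

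For $c=0$, Step~1 has a minor flaw. The componentwise bound $g(U,V)\ge(P-\varepsilon I_n)V$ is not justified. The error $\big(U\,g^0_{ij}(V_j)/V_j-K(g^0_{ij})'(0)\big)V_j$ with $j\neq i$ cannot be absorbed into $\varepsilon V_i$ unless the components $V_i$ are comparable. Only the $\phi$-weighted scalar inequality $w-J*w\ge\kappa'w$ holds, but that is all you use.

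The genuine gap is at the end of Step~3: you never actually reach a contradiction. When you bound the boundary term by $C\sup_{(-\infty,x]}w$, you discard its $y<0$ half. What remains, $\kappa'\int_{-\infty}^x w\le C\sup_{(-\infty,x]}w$, is satisfied by $w=e^{\lambda s}$ for every $\lambda\ge\kappa'/C$, so it cannot yield a contradiction by itself.

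The record-point idea would need $\theta>C/\kappa'$. Since $\theta\le h$, this means $w$ must stay comparable to $w(x_k)$ on an interval of length at least $C/\kappa'$ to the left of $x_k$. That is a long-range Harnack-type bound you do not have at $c=0$. The pointwise inequality $J*w\le(1-\kappa')w$ at $x_k$ only bounds neighbouring values from above, not below. The Laplace fallback also leaves boundary terms of indefinite sign.

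The missing idea is to keep the exact symmetric boundary term and integrate once more. By (J2), for $x\le x_0:=s_\varepsilon$,
\begin{equation*}
0<\kappa'\int_{-\infty}^{x}w\,d\tau\le B(x):=\int_0^\infty J(y)\Big(\int_{x-y}^{x}w\,d\tau-\int_{x}^{x+y}w\,d\tau\Big)dy.
\end{equation*}
Since $w$ is bounded and integrable on $(-\infty,x_0]$, (J1) and Fubini give
\begin{equation*}
\int_{-\infty}^{x_0}B(x)\,dx=-\int_0^\infty J(y)\Big(\int_{x_0-y}^{x_0}(\tau-x_0+y)\,w(\tau)\,d\tau+\int_{x_0}^{x_0+y}(x_0+y-\tau)\,w(\tau)\,d\tau\Big)dy<0,
\end{equation*}
which contradicts $B>0$ on $(-\infty,x_0]$. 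This is the nonlocal analogue of the observation that a positive concave function on a left half-line cannot tend to $0$ at $-\infty$.

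This double integration is exactly the paper's argument (Subsection~\ref{sec:proofoflmacpositive}), and it is not specific to $c=0$. Pair the $V$-equation with the left Perron vector $\ell^\top$ of $D_vg(E_0)$ and integrate twice over $(-\infty,s^*]$. This gives $c\,\ell^\top\int_{-\infty}^{s^*}V\,ds>0$, which rules out all $c\le0$ at once. Your separate $c<0$ branch through the Harnack inequality, rescaling, and the characteristic equation is therefore valid but unnecessary. The paper's route is elementary and does not need Lemma~\ref{nonlinearexp} at all.
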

We put the proof of Proposition \ref{lmacpositive} in subsection \ref{sec:proofoflmacpositive}.

Next, we give a sufficient condition for the persistence of the solution of \eqref{epidemictws} at $\infty,$ whose proof also applies Theorem \ref{generalizedHarnack} and the rescaling method.
\begin{proposition}[Persistence]\label{prop:persist}
Assume that $J$ satisfies {\rm (J0)--(J2)}, and that
$D_vg(E_0)$ is irreducible with $\Lambda_1(D_vg(E_0))>0$.
If $c\geq c^*$ (where $c^*$ is defined as in \eqref{thm1c*}), then any bounded positive solution $(U,V)$ of \eqref{epidemictws}
is persistent at $\infty,$ i.e.,
    $$\liminf_{s\to\infty}U(s)>0\mbox{ and }\liminf_{s\to\infty}V_i(s)>0\mbox{ for each }i\in[n].$$
\end{proposition}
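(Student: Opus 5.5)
The plan is a contradiction argument that combines the rescaling method (through Theorem \ref{generalizedHarnack}, in the form of Lemma \ref{nonlinearHarnack}) with the structure of the linearized system at $E_0$. Fix $c\ge c^*$ and a bounded positive solution $(U,V)$. By Lemma \ref{lmapositive}(a), $U\le K$. Since $\Lambda_1(D_vg(E_0))>0$, we have $c^*>0$, so $c>0$ and the Harnack-type estimates of Lemma \ref{nonlinearHarnack} apply. The argument has three steps, in order.

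\emph{Step 1: reduce persistence of $V$ to persistence of $|V|_1$.} First I show that $\liminf_{s\to\infty}|V(s)|_1>0$ implies the full statement. Suppose $|V(s_k)|_1\to0$ along some $s_k\to\infty$. By Lemma \ref{nonlinearHarnack}(b), $|V(\cdot+s_k)|_1\to0$ locally uniformly. I then expect $U(\cdot+s_k)$ to approach $K$: at each point the $U$-equation has the forcing $\delta(K-U)$, and the perturbation $g_0$ is small, being bounded by $C|V|_1$ (recall $U\le K$). To make this rigorous I would take a locally uniform limit of the translates and apply a maximum-principle argument to the limit. Once this is done, the renormalized sequence $V(\cdot+s_k)/|V(s_k)|_1$ converges, as in the proof of Lemma \ref{nonlinearexp}, to a positive solution $p$ of the linearized system with $P=D_vg(E_0)$.

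\emph{Step 2: control the positions.} Proposition \ref{lma:increasing} then says that $p$ is strictly increasing and $p(-\infty)=0_n$. This has to be reconciled with how the points $s_k$ are chosen. The natural choice is to let $s_k$ be points where $|V|_1$ attains small values that are ``new lows'', for instance $|V(s_k)|_1=\min_{[s_k-R_k,\,s_k]}|V|_1$ with $R_k\to\infty$. Then the limit satisfies $|p(s)|_1\ge|p(0)|_1$ for all $s\le0$. This contradicts $p(-\infty)=0_n$, because $|p(0)|_1=1$.

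The delicate part is producing such $s_k$ when $\liminf=0$. If $|V|_1$ is eventually monotone decreasing to $0$, the choice is immediate. If it oscillates, I take local minima. The remaining scenario is a region far to the right where $|V|_1$ is small but the minimum over $[s_k-R_k,s_k]$ is attained at the left endpoint. To exclude it, I would instead pick $s_k$ as the point where $|V|_1$ first drops below $\varepsilon_k$ after having been larger than some fixed $\eta$. This uses the fact that $|V(-\infty)|_1=0$ while $|V|_1$ is not identically small. Then $|V|_1\ge\varepsilon_k$ on a long interval to the left of $s_k$, and as long as $\varepsilon_k\to0$ slowly relative to the Harnack constants, the limit again satisfies $|p(s)|_1\ge|p(0)|_1$ for $s\le0$. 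The same contradiction follows. I expect this selection of $s_k$ to be the main obstacle; I would first try the ``first entry below a level'' construction together with the uniform bound $e^{\pm Mr}$ from Lemma \ref{nonlinearHarnack}.

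\emph{Step 3: upgrade to each component and to $U$.} Once $\liminf_{s\to\infty}|V|_1>0$ is known, componentwise persistence comes next. If $V_i(s_k)\to0$ while $|V(s_k)|_1$ stays bounded below, the translates converge to a bounded nonnegative solution of \eqref{epidemictws} with a zero component. By Lemma \ref{lmapositive}(b), such a limit must be positive or equal to $E_0$. The latter is excluded because $|V|_1$ stays bounded below, so the limit is positive, contradicting the zero component. For $U$, if $U(s_k)\to0$, the limiting $U$-equation at the minimum point gives $0\ge d_0\int J\,U+\delta K-g_0>0$ once I note that $g_0(0,v)=0$, which is a contradiction.
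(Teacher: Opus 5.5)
Your proof is correct and uses the same toolkit as the paper: translate, normalize by $|V(s_k)|_1$, pass via Lemma~\ref{nonlinearHarnack} to a positive solution of the linearized system, contradict Proposition~\ref{lma:increasing}, and identify limits with Lemma~\ref{lmapositive}(b). The difference is that you route it through the aggregate $|V|_1$ first.

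The paper skips that intermediate step. It assumes $\liminf_{s\to\infty}V_{i_0}(s)=0$ for one component and picks $s_k\to\infty$ with $V_{i_0}(s_k)\to0$ \emph{and} $V_{i_0}'(s_k)\le0$. Such points always exist: take $t$ with $V_{i_0}(t)$ small, then $t'>t$ with $V_{i_0}(t')<V_{i_0}(t)$, and a minimum point of $V_{i_0}$ on $[t,t']$. Lemma~\ref{lmapositive}(b) then turns $V^*_{i_0}(0)=0$ into $(U,V)(\cdot+s_k)\to E_0$. The contradiction comes from $(p^\infty_{i_0})'(0)>0$.

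Because only a sign condition at a single point is needed, strict monotonicity of the limit does all the work. The paper therefore needs neither your long-interval ``new low'' selection nor your separate upgrade from $|V|_1$ to components. Your route uses only the weaker fact $p(-\infty)=0_n$ about the linear limit. It is essentially the mechanism of Step~2 in the proof of Proposition~\ref{lma:Persistence-Extinction}, at the cost of an extra selection argument and an extra compactness step.

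Your selection in Step~2 does work, but state it cleanly. Fix any $t_0$ and set $\eta:=|V(t_0)|_1>0$. For $\varepsilon_k<\eta$ with $\varepsilon_k\to0$, let $s_k$ be the first point after $t_0$ with $|V(s_k)|_1=\varepsilon_k$; it exists because the $\liminf$ is zero. Then $|V|_1\ge\varepsilon_k$ on $[t_0,s_k]$, and Lemma~\ref{nonlinearHarnack}(b) forces $s_k-t_0\ge M^{-1}\ln(\eta/\varepsilon_k)\to\infty$ for \emph{any} $\varepsilon_k\to0$.

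So two remarks in your write-up should go. The ``slow decay relative to the Harnack constants'' condition is not needed. More importantly, drop the appeal to $|V(-\infty)|_1=0$: Proposition~\ref{prop:persist} assumes nothing about the solution at $-\infty$, and your construction never uses it.

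Your treatment of $U$ is a correct, direct replacement for the paper's citation of Lemma~\ref{lmapositive}(b). You evaluate the limiting equation at the zero minimum and use $g_0(0,v)=0$.
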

\begin{proof}
    We first deal with $U$. Assume to the contrary that $\liminf\limits_{s\to\infty}U(s)=0.$ Then there is a sequence $s_k\to\infty$ such that $U(s_k)\to 0$ as $k\to\infty.$ Since $\{(U,V)(\cdot+s_k)\}$ is uniformly bounded in $\mathcal{C}^{1,1}_{{\rm loc}}(\mathbb{R})$, and hence converges, up to a diagonal extraction, to some $(U^*,V^*)$ in $\mathcal{C}_{\mathrm{loc}}^1(\mathbb{R})$ with $U^*(0)=0$. This $(U^*,V^*)$ is again a bounded nonnegative  solution of \eqref{epidemictws}. By Lemma \ref{lmapositive} (b), $U^*$ is positive on $\mathbb{R},$ contradicting $U^*(0)=0.$ 
    Therefore, $\liminf\limits_{s\to\infty}U(s)>0$.

    Next, we deal with $V$. Assume to the contrary that $\liminf\limits_{s\to\infty}V_{i_0}(s)=0\mbox{ for some }i_0\in[n].$ Then there is a sequence $s_k\to\infty$ such that 
    \begin{align*}
        V_{i_0}(s_k)\to 0\mbox{ and }(V_{i_0})'(s_k)\leq 0\mbox{ for each }k.
    \end{align*}
    Similarly as in the previous paragraph, $\{(U,V)(\cdot+s_k)\}$ converges, up to a diagonal extraction, to some bounded nonnegative solution $(U^*,V^*)$ of \eqref{epidemictws} in $\mathcal{C}_{\mathrm{loc}}^1(\mathbb{R}).$ Since $V_{i_0}^*(0)=0,$ 
    by Lemma \ref{lmapositive} (b), we have $(U^*,V^*)=(K,0_n)$. Hence,
    $(U,V)(\cdot+s_k)\to (K, 0_n)$ in $\mathcal{C}_{\mathrm{loc}}^1(\mathbb{R})$, up to a diagonal extraction.
    Now, let
    $$\widehat{p}^{(k)}(s):=\frac{V(s+s_k)}{|V(s_k)|_1}.$$
    Proceeding as in Step 2 of the proof of Proposition \ref{linearexp} (replacing $-\infty$ by $\infty$; see also Lemma \ref{nonlinearexp}),
    we can show that $\{\widehat{p}^{(k)}\}$ converges, up to a diagonal extraction, to some function $p^\infty$ in $\mathcal{C}_{\mathrm{loc}}^1(\mathbb{R}),$ which is a positive solution of the linear system 
    \begin{align*}
        c(p^\infty)'(s)=D\left(\int_{\mathbb{R}}J(s-s')p^\infty(s')ds'-p^\infty(s)\right)+D_vg(E_0)p^\infty(s),\quad s\in\mathbb{R}.
    \end{align*}
    with $|p^\infty(0)|_1=1>0,\quad(p^\infty)_{i_0}'(0)=\lim\limits_{k\to\infty}\frac{V_{i_0}'(s_k)}{|V(s_k)|_1}\leq 0~(\mbox{up to a diagonal extraction}).$
    But $c\geq c^*$ gives $(p^\infty)_{i_0}'(0)>0$ by Proposition \ref{lma:increasing}.
    This contradiction completes the proof.
\end{proof}

Inspired by \cite[Lemma 6.12]{girardin2017non}, we are going to prove a  persistence-extinction dichotomy (Proposition \ref{lma:Persistence-Extinction}), which plays an important role in guaranteeing the boundary condition of the critical wave ($c=c^*$) at $-\infty$. 

\begin{proposition}[Persistence-Extinction Dichotomy]\label{lma:Persistence-Extinction}
Assume that $J$ satisfies {\rm (J0)--(J2)}, and that $D_vg(E_0)$ is irreducible with $\Lambda_1(D_vg(E_0))>0$. 
    Then for each $c\geq c^*$ (where $c^*$ is defined as in \eqref{thm1c*}), there is a positive constant $\eta_c>0$ such that any bounded nonnegative solution $(U,V)$ of \eqref{epidemictws}  
    (no behavior near $-\infty$ is assumed) 
    satisfies the following dichotomy:
    \begin{enumerate}[(i)]
        \item $\lim\limits_{s\to-\infty}V(s)=0_n$, or
        \item $\inf\limits_{s\in(-\infty,0)}|V(s)|_1\geq\eta_c>0$.
    \end{enumerate}
\end{proposition}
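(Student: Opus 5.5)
We argue by contradiction, following the compactness strategy of Proposition \ref{prop:persist}. Suppose the dichotomy fails for a fixed $c\ge c^*$. Then there are bounded nonnegative solutions $(U^k,V^k)$ of \eqref{epidemictws} and points $\sigma_k<0$ with the following two properties. First, $(i)$ fails for each $k$, so $\limsup_{s\to-\infty}|V^k(s)|_1>0$. Second, $|V^k(\sigma_k)|_1\to0$. By Lemma \ref{lmapositive}(b) we may assume each $(U^k,V^k)$ is positive; otherwise $V^k\equiv0$ and $(i)$ holds. By Proposition \ref{lmacpositive}, and since $c\ge c^*>0$, the $\ell^1$-Harnack estimate Lemma \ref{nonlinearHarnack} applies. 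Its constant $M$ is uniform in $k$, because it depends only on $J,c,D,D_vg(E_0),\mathbb{M}$.

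\textbf{Step 1: a uniform smallness threshold.} We first show there is $\eta>0$, independent of the solution, such that whenever $|V(s)|_1\le\eta$ on a long interval, the profile there is close to $E_0$ and $V$ is governed by the linearization. Concretely, if $|V|_1$ is small on $[a,b]$, then the $U$-equation (whose reaction term is $\delta(K-U)$ minus something small) together with the nonlocal maximum principle (Proposition \ref{maxprinciple}, applied with $u(x,t)=U(x+ct)$) forces $U$ to be close to $K$ away from the endpoints. Consequently $g(U,V)\ge (D_vg(E_0)-\epsilon I)V$ on that subinterval, using (C2)--(C4).

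\textbf{Step 2: choosing the rescaling points.} Since $(i)$ fails, $|V^k|_1$ does not tend to $0$ at $-\infty$, while it is small at $\sigma_k$. So there is a point $\tau_k\le\sigma_k$ where $|V^k|_1$ is a local minimum along a sequence of excursions, or where it first crosses from $\ge\eta$ down to small values. Choose $\tau_k$ so that $|V^k(\tau_k)|_1\to0$, $\sum_i (V_i^k)'(\tau_k)\le 0$, and $|V^k|_1\le\eta$ on an interval $[\tau_k-L_k,\tau_k]$ with $L_k\to\infty$. This last property is obtained from Harnack part (b): a value of order $\eta$ cannot be reached within distance $\frac1M\ln(\eta/|V^k(\tau_k)|_1)\to\infty$. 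Form $\widehat p^{(k)}=V^k(\cdot+\tau_k)/|V^k(\tau_k)|_1$. As in Step 2 of Proposition \ref{linearexp}, Lemma \ref{nonlinearHarnack}(b) and (J1) give $\mathcal{C}^1_{\rm loc}$ compactness on $(-\infty,0]$, since the translated profiles tend to $E_0$ there.

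\textbf{Step 3: the contradiction, and the main obstacle.} The limit $p^\infty$ should be a positive solution of the linear system with $|p^\infty(0)|_1=1$ and $\sum_i (p^\infty_i)'(0)\le0$. For $c\ge c^*$, Proposition \ref{lma:increasing} says every positive solution is strictly increasing, which gives the contradiction. The main obstacle is that Proposition \ref{lma:increasing} needs a solution on all of $\mathbb{R}$. We control $[\tau_k-L_k,\tau_k]$, but the nonlocal term also samples $s>\tau_k$, where $V^k$ may grow back to size $\eta$. To handle this, I would pick $\tau_k$ so that the small region extends to the right as well, with $|V^k|_1$ small on $[\tau_k-L_k,\tau_k+L_k]$. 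One way is to take $\tau_k$ as the midpoint of a maximal interval of smallness, whose length tends to $\infty$ by the Harnack bound. Then $\widehat p^{(k)}$ converges on all of $\mathbb{R}$, using (J1) to control the tails. If the midpoint choice loses the sign information on $\sum_i (V_i^k)'$, I would instead take $\tau_k$ to be a minimum point of $|V^k|_1$ over that interval. This still gives $p^\infty$ a local minimum of $|p^\infty|_1$ at $0$, which contradicts the strict monotonicity of Proposition \ref{lma:increasing}. Uniformity of $\eta_c$ then follows, because every constant used depends only on $c$ and the data.
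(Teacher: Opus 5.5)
\textbf{Gap: the choice of $\tau_k$ in Step 2.} The step that fails is the selection of $\tau_k$. Failure of (i) only gives $\limsup_{s\to-\infty}|V^k(s)|_1>0$, and this number may be smaller than $|V^k(\sigma_k)|_1$. Consider the configuration $|V^k(s)|_1\downarrow b_k$ as $s\to-\infty$ with $0<b_k\le|V^k(\sigma_k)|_1\to0$, together with $\sum_i(V^k_i)'>0$ on the maximal interval $(-\infty,t_k)$ on which $|V^k|_1<\eta$. In that configuration:
\begin{itemize}
\item there is no $\tau_k\le\sigma_k$ with $\sum_i(V^k_i)'(\tau_k)\le0$;
\item there is no excursion and no downward crossing of the level $\eta$;
\item the infimum of $|V^k|_1$ over that interval is not attained.
\end{itemize}
So neither your Step~2 choice nor the minimum-over-a-maximal-interval fallback of Step~3 produces $\tau_k$, and nothing in your argument excludes this configuration.

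This is exactly the situation the paper isolates in its Step~2, namely solutions with $0<\inf_{s<0}|V|_1$ small. The paper rules it out by a different contradiction. It normalizes by $\beta_k=\inf_{s<0}|V^{(k)}|_1$ at a point $s_k<0$ with $|V^{(k)}(s_k)|_1<2\beta_k$. The limit then satisfies $|q^\infty|_1\ge1$ on $(-\infty,0]$, against $q^\infty(-\infty)=0_n$ from Proposition~\ref{lma:increasing}. The exact sign $V_{i_0}'(s_k)\le0$ is used only in the paper's Step~1, where $\inf_{s<0}|V|_1=0$ forces oscillation.

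Your one-sequence strategy can be rescued by asking only for an approximate sign. Let $\tau_k$ be the largest $s\le\sigma_k$ with $\sum_i(V^k_i)'(s)\le\frac1k|V^k(s)|_1$; the set is closed, so the maximum exists.
\begin{itemize}
\item Such $s$ exist: otherwise $(\ln|V^k|_1)'>1/k$ on $(-\infty,\sigma_k]$, which forces $V^k(-\infty)=0_n$, i.e.\ (i).
\item $|V^k|_1$ increases on $[\tau_k,\sigma_k]$, so $|V^k(\tau_k)|_1\le|V^k(\sigma_k)|_1\to0$.
\item The rescaled limit then has $\sum_i(p^\infty_i)'(0)\le0$. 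This contradicts $(p^\infty)'\ge\lambda p^\infty$ with $\lambda>0$, from the proof of Proposition~\ref{lma:increasing}.
\end{itemize}
This merges the paper's two steps into a single argument.

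\textbf{Other points.} The ``main obstacle'' of Step~3 is not one, and Step~1 is not needed.
\begin{itemize}
\item Lemma~\ref{nonlinearHarnack}(b) is global and two-sided, so $|\widehat p^{(k)}(s)|_1\le e^{M|s|}$ for all $s\in\mathbb{R}$. With (J1), dominated convergence passes to the limit in the convolution on the whole line, exactly as in Step~2 of Proposition~\ref{linearexp}.
\item The reaction term is local, $g_i(U,V)=\sum_j\big(U g^0_{ij}(V_j)/V_j+m_{ij}\big)V_j$. So you only need $(U^k,V^k)(\cdot+\tau_k)\to E_0$ locally uniformly.
\item That local convergence follows from $|V^k(\tau_k)|_1\to0$, the two-sided Harnack bound, compactness, and Lemma~\ref{lmapositive}(b): a limit with $V^*\equiv0_n$ must be $(K,0_n)$.
\item Your own Harnack estimate already gives smallness on $[\tau_k-L_k,\tau_k+L_k]$. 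Hence neither the maximal-interval construction nor the maximum-principle claim about $U$ in Step~1 is required. That claim is unproved, and not evident for a nonlocal operator at a fixed level $\eta$.
\item Proposition~\ref{lmacpositive} does not apply here, because it assumes $(U,V)(-\infty)=E_0$. The fact $c>0$ follows directly from $c\ge c^*>0$ (Lemma~\ref{lmalambda1}(b)).
\end{itemize}
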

\begin{proof}
First, if $V_{i_0}(s_0)=0$ for some $i_0\in[n]$ and some $s_0\in\mathbb{R},$ then by Lemma \ref{lmapositive} (b), we have $V\equiv 0_n$ on $\mathbb{R}$ and so (i) must hold. So we may assume that $V$ is positive everywhere. The remainder of the proof is cut into three steps.

{\bf Step 1}. {\it If $\inf\limits_{s\in(-\infty,0)}|V(s)|_1=0$, then (i) holds.}\\
     Suppose the contrary, that is, $\inf\limits_{s\in(-\infty,0)}|V(s)|_1=0$ but $\limsup\limits_{s\to-\infty}V_{i_0}(s)>0$ for some $i_0\in[n]$. 
     Then there is a sequence $s_k\to-\infty$ such that 
     \begin{align*}
        V_{i_0}(s_k) \to 0\mbox{ and }V_{i_0}'(s_k)\leq 0\mbox{ for each }k.
     \end{align*}

     Note that $\{(U,V)(\cdot+s_k)\}$ is uniformly bounded in $\mathcal{C}^{1,1}_{{\rm loc}}(\mathbb{R})$, and hence converges, up to a diagonal extraction, to some nonnegative solution $(U^*,V^*)$ of \eqref{epidemictws} in $\mathcal{C}^1_{\mathrm{loc}}(\mathbb{R})$.
     Since $V_{i_0}^*(0)=0$, we see that $(U^*,V^*)\equiv(K,0_n)$ by Lemma \ref{lmapositive} (b). 
     We shall still denote this subsequence as $\{(U,V)(\cdot+s_k)\}$.

     Put
     \begin{align*}
         \widehat{p}^{(k)}(s):=\frac{V(s+s_k)}{|V(s_k)|_1},\quad k\in\mathbb{N},\quad s\in\mathbb{R}.
     \end{align*}
     In view of Lemma \ref{nonlinearHarnack} (b) (see also the proof of Lemma \ref{nonlinearexp}), since $(U,V)(s_k)\to (K,0_n)$  as $k\to\infty,$ the sequence $\{\widehat{p}^{(k)}\}$ converges in $\mathcal{C}^1_{\mathrm{loc}}(\mathbb{R})$, up to a diagonal extraction, to a positive (due to $|p(0)|_1=1$ and Lemma \ref{SMPlinearized}) solution $p^\infty(s)$ of the linear system
     \begin{align*}
         c(p^\infty)'(s)=D\left(\int_\mathbb{R}J(s')p^\infty(s-s')ds'-p^\infty(s)\right)+D_vg(E_0)p^\infty(s),\quad s\in\mathbb{R}.
     \end{align*}
     Since $c\geq c^*,$ Proposition \ref{lma:increasing} deduces that each $p_i^\infty$ is strictly increasing on $\mathbb{R}.$
     In particular, $(p_{i_0}^\infty)'(0)>0.$
     However, 
     $p_{i_0}'(0)=\lim\limits_{k\to\infty}\frac{V_{i_0}'(s_k)}{|V(s_k)|_1}\leq 0$ (up to a diagonal extraction), a contradiction.
     This proves Step 1.

{\bf Step 2}. {\it Let $\Sigma$ be the set of bounded nonnegative  solutions $(U,V)$ of \eqref{epidemictws} with $\inf\limits_{s\in(-\infty,0)}|V(s)|_1>0$. If $\Sigma$ is nonempty, then  }
$$\eta_c:=\inf\left\{\inf_{s\in(-\infty,0)}|V(s)|_1:(U,V)\in\Sigma\right\}>0.$$
     Suppose the contrary, i.e., there is a sequence $\{(U^{(k)},V^{(k)})\}$ of bounded nonnegative  solutions of \eqref{epidemictws} with $$\beta_k:=\inf\limits_{s\in(-\infty,0)}|V^{(k)}(s)|_1>0\mbox{ for each }k\in\mathbb{N}$$ such that $$\lim\limits_{k\to\infty} \beta_k=0.$$
     For each $k,$ choose an $s_k\in(-\infty,0)$ such that $\beta_k\leq|V^{(k)}(s_k)|_1<2\beta_k.$ Next, define 
     $$\widehat{q}^{(k)}(s):=\dfrac{V^{{(k)}}(s+s_k)}{\beta_k}\mbox{ for }s\in\mathbb{R}\mbox{ and }k\in\mathbb{N}.$$
     Recall from Lemma \ref{nonlinearHarnack} that 
     there is a positive constant $M>0$ such that for any $r>0,$
     $$\sup_{k\in\mathbb{N},s\in[-r,r]}\frac{|V^{{(k)}}(s+s_k)|_1}{|V^{{(k)}}(s_k)|_1}\leq e^{Mr}.$$
     Then for any $k\in\mathbb{N},$ $r>0$ and any $s\in[-r,r]$, $|V^{(k)}(s+s_k)|_1\leq e^{Mr}|V^{(k)}(s_k)|_1\leq 2e^{Mr}\beta_k$, 
     which means $V^{(k)}(\cdot+s_k)\to 0_n$, up to a diagonal extraction, in $\mathcal{C}_{\mathrm{loc}}^1(\mathbb{R})$ as $k\to\infty$. 
    Observing the equation satisfied by $U$, Lemma \ref{lmapositive} (b) deduces that $\{(U^{(k)},V^{(k)})(\cdot+s_k)\}$ converges, up to a diagonal extraction, to $(K,0_n)$ in $\mathcal{C}_{\mathrm{loc}}^1(\mathbb{R})$ as $k\to\infty$.
     On the other hand, $\{\widehat{q}^{(k)}\}$ is uniformly bounded in $\mathcal{C}^{1,1}_{{\rm loc}}(\mathbb{R})$,
     and thus converges, up to a diagonal extraction, to some nonnegative $q^\infty$ in $\mathcal{C}^1_{\mathrm{loc}}(\mathbb{R})$ which satisfies that
     \begin{align*}
         c(q^\infty)'(s)=D\left(\int_{\mathbb{R}}J(s')q^\infty(s-s')ds'-q^\infty(s)\right)+D_vg(E_0)q^\infty(s)\mbox{ for all }s\in\mathbb{R},
     \end{align*}
     and that 
     \begin{align}\label{contrad}
      |q^\infty(s)|_1\geq 1\quad \text{ for all } s\in(-\infty,0].   
     \end{align}
     In particular, $q^\infty\not\equiv0_n$.
     By Lemma \ref{SMPlinearized}, $q^\infty$ is positive on $\mathbb{R}.$
    By Proposition \ref{lma:increasing},
     $q^\infty(-\infty)=0_n$, which contradicts \eqref{contrad}. This completes the proof of Step 2.
     

{\bf Step 3}. {\it The number $\eta_c>0$ satisfies the requirement.}\\
     If (i) does not hold for some bounded positive solution $(U,V)$ of \eqref{epidemictws}, 
     then by Step 1, $\inf\limits_{s\in(-\infty,0)}|V(s)|_1>0$ and thus $(U,V)\in\Sigma$. Such $(U,V)$ must satisfy (ii), by the definition of $\eta_c$. This lemma is then proved.
\end{proof}

\medskip

\subsection{Nonexistence results}

Now, we are ready to establish the nonexistence result (Theorem \ref{thm1} (a) and (b)), which is formulated as Proposition~\ref{prop:thm1-a}.

\begin{proposition}\label{prop:thm1-a}
Assume that $J$ satisfies {\rm (J0)--(J2)}, and that
$D_vg(E_0)$ is irreducible.
Then the following hold:
\begin{enumerate}[(a)]
    \item If $\Lambda_1(D_vg(E_0))<0,$ then \eqref{epidemictws} admits no bounded positive solution for any wave speed $c\in\mathbb{R}$.
    \item If $\Lambda_1(D_vg(E_0))>0$, then \eqref{epidemictws} admits no bounded positive solution satisfying $(U,V)(-\infty)=E_0$ for any wave speed $c<c^*$, where $c^*$ is defined as in \eqref{thm1c*}.
\end{enumerate}
\end{proposition}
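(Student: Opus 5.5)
Part (b) is the easier one and comes first. Let $(U,V)$ be a bounded positive solution of \eqref{epidemictws} with $(U,V)(-\infty)=E_0$. Since $\Lambda_1(D_vg(E_0))>0$ and $D_vg(E_0)$ is irreducible, Proposition \ref{lmacpositive} gives $c>0$, so $c\neq0$. Lemma \ref{nonlinearexp} then provides some $\lambda\in\mathbb{R}$ with $\Lambda_1(H_{\lambda,c}+D_vg(E_0))=0$. Applying Lemma \ref{lmalambda1}(b) with $P=D_vg(E_0)$ and $c_0=c^*$, the root set $\Lambda(c)$ is empty for $-c^*<c<c^*$. Combined with $c>0$, this forces $c\geq c^*$, which contradicts $c<c^*$.

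For part (a), let $(U,V)$ be a bounded positive solution, with no boundary behaviour assumed, and suppose $\Lambda_1(P)<0$ with $P:=D_vg(E_0)$. By Lemma \ref{lmapositive}(a) we have $U\le K$. By (C2) and (C4) we also have $g(U,V)\le PV$, as in the proof of Lemma \ref{nonlinearHarnack}. Hence $V$ is a positive bounded supersolution-type object:
\begin{align*}
cV'(s)\le D\Big(\int_{\mathbb{R}}J(s-s')V(s')ds'-V(s)\Big)+PV(s),\quad s\in\mathbb{R}.
\end{align*}
Let $\phi^\top>0$ be the left Perron eigenvector of $D+P$ in the case $c=0$. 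I will instead use an $\ell^1$-type functional with a general left Perron vector. Take $\phi^\top$ to be the left Perron eigenvector of the irreducible essentially nonnegative matrix $P$, so that $\phi^\top P=\Lambda_1(P)\phi^\top$, and set $z(s)=\phi^\top V(s)>0$.

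The obstacle is the nonlocal term with the non-scalar matrix $D$. Testing with $\phi$ does not commute with $D$ unless $D$ is scalar. I will avoid this by taking $\sup$ rather than differentiating. Let $S_i=\sup_{\mathbb{R}}V_i<\infty$ and choose the vector $\psi=(S_i)_i$. The cleaner route is a comparison with the time-dependent problem. Set $v(x,t)=V(x+ct)$, which satisfies $v_t\le d_i(J*v_i-v_i)+(Pv)_i$. The function $w(t)=\|v\|_\infty$-type bound $\bar w(t)=Ae^{\Lambda_1(P)t}\psi_P$ is a spatially constant solution of the linear cooperative system, where $\psi_P>0$ is the right Perron vector of $P$ and $A$ is large so that $\bar w(0)\ge V$. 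The nonlocal term vanishes on constants. The weak maximum principle (Proposition \ref{maxprinciple}) applied to $\bar w-v$, with $c_{ij}=P_{ij}$ so that (M) holds, gives
\[
V(x+ct)=v(x,t)\le Ae^{\Lambda_1(P)t}\psi_P\to0_n .
\]
Since $x$ is arbitrary, each point $s$ can be written as $x+ct$ for every large $t$, taking $x=s-ct$. So $V(s)\le Ae^{\Lambda_1(P)t}\psi_P$ for all $t$, which gives $V\equiv0_n$ and contradicts positivity.

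The step needing care is verifying the regularity hypotheses of Proposition \ref{maxprinciple} on $\Omega=\mathbb{R}$: we need $v_i\in\mathcal{C}([0,T];L^\infty)$. This follows since $V$ is bounded with bounded derivative, hence uniformly continuous. The same comparison works for every $c\in\mathbb{R}$, including $c=0$, because no Harnack inequality is needed in part (a).
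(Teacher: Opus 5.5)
Your proposal is correct and follows the paper's proof. Part (b) combines Proposition \ref{lmacpositive}, Lemma \ref{nonlinearexp} and Lemma \ref{lmalambda1}(b)(iii) exactly as the paper does, and part (a), once the abandoned left-eigenvector detour is dropped, is the same comparison of $v(x,t)=V(x+ct)$ with the spatially constant supersolution $Ae^{\Lambda_1(P)t}\psi_P$ via Proposition \ref{maxprinciple}. Your closing step (fix $s$, take $x=s-ct$) and your regularity check are more explicit than the paper's, with one small correction: for $c=0$ the condition $v_i\in\mathcal{C}([0,T];L^\infty)$ holds because $v$ does not depend on $t$, not because $V'$ is bounded, since the equation gives no bound on $V'$ when $c=0$.
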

\begin{proof}
We denote 
    $P=D_vg(E_0), \Lambda=\Lambda_1(P)$.
    First suppose that  $\Lambda<0$. Also, suppose that \eqref{epidemictws} admits a bounded positive solution $(U,V)(s).$ 
    This means that \eqref{epidemicmodel} admits a bounded positive solution of the form 
    $(u,v)(x,t)=(U,V)(x+ct)$ for $x\in\mathbb{R},~t>0.$
    By Lemma \ref{lmapositive} (a) and properties (C2) and (C4), $$g(u,v)(x,t)\leq Pv(x,t)\mbox{ for }x\in\mathbb{R},t>0.$$
On the other hand, let $\eta$ be the Perron right positive eigenvector of $P$ and let $\widehat{v}(x,t):=\tau e^{\Lambda t}\eta$ for $x\in\mathbb{R}$ and $t\geq 0$, where $\tau>0$ is a positive constant such that $\tau\eta>v(x,0)$ for all $x\in\mathbb{R}.$ Then $w:=\widehat{v}-v$ satisfies
\begin{align*}
    w_i(x,0)&\geq 0\mbox{ for }x\in\mathbb{R};\\
    \frac{\partial w_i}{\partial t}(x,t)&\geq d_i\int_{\mathbb{R}} J(x-y)w_i(y,t)dy-d_iw_i(x,t)+\sum_{j=1}^nP_{ij}w_j(x,t)\mbox{ for }x\in\mathbb{R},t>0
\end{align*}
for each $i\in[n].$
By the weak maximum principle (Proposition \ref{maxprinciple}), $w(x,t)\geq0$ for $x\in\mathbb{R}$ and $t\geq 0.$ Thus, $0\leq v(x,t)\leq\tau e^{\Lambda t}\eta\mbox{ for }x\in\mathbb{R}\mbox{ and }t\geq 0.$ Since $\Lambda<0,$ we see that $v(x,t)\to 0$ as $t\to\infty$ uniformly in $x$ and hence it is impossible for \eqref{epidemictws} to have a bounded positive solution. Then part (a) is proved.

    Next, suppose that $\Lambda>0$. From Proposition \ref{lmacpositive}, we see that if $-\infty<c\leq 0,$ then \eqref{epidemictws} admits no bounded positive solutions satisfying $(U,V)(-\infty)=E_0$. 
    
    Finally, consider the case $\Lambda>0$ and $0<c<c^*.$ Suppose that \eqref{epidemictws} admits a bounded positive solution satisfying $(U,V)(-\infty)=E_0$.
    By Lemma \ref{nonlinearexp},
    there is a real number $\lambda$ such that $\Lambda_1(H_{\lambda,c}+P)=0,$ which contradicts Lemma \ref{lmalambda1} (b)(iii). Thus, \eqref{epidemictws} admits no bounded positive solution satisfying $(U,V)(-\infty)=E_0$. This completes the proof.
\end{proof}

\subsection{Existence results}
In this section, we always assume that $J$ satisfies (J0)--(J2), that {\rm(G1)} holds, and that 
$c\ge c^*$ ($c^*$ is defined as in \eqref{thm1c*}).
Furthermore, we shall assume
\begin{enumerate}
    \item[(H1)] $D_vg(E_0)$ is irreducible with $\Lambda_1(D_vg(E_0))>0$;
    \item[(H2)] $\mathbb{M}:=\begin{bmatrix}
        m_{ij}
    \end{bmatrix}$ satisfies $\Lambda_1(\mathbb{M})<0.$
\end{enumerate}
We state the existence result for traveling waves with a $\mathcal{C}^{1,1}$ estimate when $c>c^*$ as follows,
whose proof utilizes the method of super- and sub-solutions with Schauder’s fixed-point argument, and is put in Subsection \ref{sec:proofc>c*}.
\begin{proposition}\label{superexistence}
Assume that $J$ satisfies {\rm (J0)--(J2)}, and that {\rm(G1)}, {\rm (H1)} and {\rm (H2)} hold.
    If $c>c^*,$ then \eqref{epidemictws} admits a bounded positive solution $(U_c,V_c)(s)$ satisfying $(U_c,V_c)(-\infty)=E_0$. Furthermore,  given a bounded subset $E$ of $(c^*,\infty)$, there exists a constant $C>0$ independent of $c$ such that 
    \begin{align}\label{unibddwrtc}
        \|(U_c,V_c)\|_{\mathcal{C}^{1,1}(\mathbb{R},\mathbb{R}^{n+1})}\leq C\mbox{ for each }c\in E,
    \end{align}
    where the $\mathcal{C}^{1,1}$-norm of $p:=(U_c,V_c)$ on an interval $I$ is defined as
    $$\|p\|_{\mathcal{C}^{1,1}(I,\mathbb{R}^{n+1})}=\sup_{s\in I}|p(s)|_1+\sup_{s\in I}|p'(s)|_1+\sup_{s_1,s_2\in I,s_1\neq s_2}\frac{|p'(s_1)-p'(s_2)|_1}{|s_1-s_2|}.$$
\end{proposition}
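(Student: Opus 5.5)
We follow the truncated-problem strategy outlined in the introduction: solve \eqref{epidemictws} on bounded intervals $[-L,L]$ by Schauder's fixed point theorem inside a region cut out by upper and lower barriers, then let $L\to\infty$ using uniform $\mathcal{C}^{1,1}$ bounds.

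\textbf{Barriers.} Let $\lambda_c<\overline{\lambda}_c$ be the two positive roots from Lemma~\ref{lmalambda1}(b)(v), applied with $P=D_vg(E_0)$, and let $\psi_c$ be the right Perron eigenvector of $H_{\lambda_c,c}+D_vg(E_0)$.

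For $V$, the upper barrier is $\overline{V}(s)=\min\{e^{\lambda_c s}\psi_c,\,\overline{K}1_n\}$. The constant $\overline{K}$ comes from (G1) and (H2): if $\eta$ is the Perron vector of $\mathbb{M}^\top$ (perturbed to be positive if $\mathbb{M}$ is reducible), then $\eta^\top\mathbb{M}<0$. Together with the boundedness of $g_{ij}^0$, this gives an $\ell^1$-type supersolution bound on $V$ at a large level, and $\overline{K}$ is that level.

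The lower barrier is $\underline{V}(s)=\max\{e^{\lambda_c s}\psi_c-q e^{(\lambda_c+\epsilon)s}\tilde\psi,\,0\}$. Here $\epsilon>0$ is small enough that $\lambda_c+\epsilon<\overline{\lambda}_c$, and $\tilde\psi$ is the Perron vector of $H_{\lambda_c+\epsilon,c}+D_vg(E_0)$. Lemma~\ref{lmalambda1}(c) gives $\Lambda_1(H_{\lambda_c+\epsilon,c}+D_vg(E_0))<0$. We choose $q$ large so that the quadratic error from (C4), namely $g_i(U,V)\ge (D_vg(E_0)V)_i-C(|V|^2+(K-U))$, is absorbed.

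For $U$, the barriers are $\overline{U}=K$ and $\underline{U}=\max\{K-\sigma e^{\alpha s},0\}$, with $\alpha\in(0,\lambda_c)$ small.

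\textbf{Fixed-point map.} Add a large constant $\beta$ to both sides of each equation to make the right-hand side monotone. On $[-L,L]$, impose the boundary data $(U,V)=(\underline{U},\underline{V})$ or $E_0$ outside the interval. Invert the operator $c\partial_s+\beta$; this is a first-order ODE, explicitly solvable by integrating from $-L$. This defines the map $T$. The barrier inequalities together with the monotonicity of each component (the nonlinearity is monotone in the relevant variables after adding $\beta$: $g_i$ is increasing in $U$ and $V_j$, and $-g_0$ is decreasing in $V$) show that $T$ maps the convex set $\{\underline{U}\le U\le K,\ \underline{V}\le V\le \overline{V}\}$ into itself. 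The non-cooperative terms are handled by plugging the extreme barrier values of the other component; this is where the exponential construction has to be checked. Compactness of $T$ follows from the gain of one derivative, so Schauder's theorem gives a fixed point $(U_L,V_L)$.

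\textbf{Passage to the limit.} From the equation, $|(U_L,V_L)'|$ is bounded by $\tfrac1c$ times (sup-norm bound $+\,2\max_i d_i\cdot$bound $+$ Lipschitz constants). Differentiating in the integral form, $(U_L,V_L)'$ is Lipschitz with a constant depending on $J$ only through $\|J\|_{L^1}$. This is because $\int J(s-s')p(s')ds'$ has Lipschitz constant at most $\|p'\|_\infty$, so the bound is uniform in $L$ and locally uniform for $c$ in a bounded $E\subset(c^*,\infty)$, since $c\ge\inf E>0$. This yields \eqref{unibddwrtc}. Arzel\`a--Ascoli and dominated convergence then give a solution on $\mathbb{R}$ squeezed between the barriers, so $(U,V)(-\infty)=E_0$ and $V\not\equiv0$. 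Positivity follows from Lemma~\ref{lmapositive}(b).

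\textbf{Main obstacle.} The delicate step is verifying the super/sub-solution inequalities across the kinks of the min/max barriers when the diffusion is nonlocal, and obtaining the upper bound $\overline{K}$ without a comparison principle for the full system. For the upper bound I would first try the scalar quantity $\eta^\top V$ combined with (G1). Boundedness also keeps $c$-uniformity, because $\overline{K}$ does not depend on $c$.
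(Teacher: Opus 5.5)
Your architecture matches the paper's proof. Both use an exponential upper barrier for $V$ capped at a constant profile, $\underline U=\max\{K-\sigma e^{\alpha s},0\}$, and a lower barrier $\underline V$ built from the Perron vectors at $\lambda_c$ and $\lambda_c+\epsilon$. Both then apply Schauder on a truncated interval with cross terms frozen at the extreme barriers, with $\mathcal{C}^{1,1}$ bounds independent of the truncation.

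\textbf{The cap $\overline K 1_n$ for $\overline V$ does not work.} The comparison step giving $V_i\le\overline V_i$ needs $\overline V_i$ to be a supersolution of the $i$-th frozen equation with $U^0=K$ and $V^0_j=\overline V_j$ for $j\ne i$. For large $s$, all components sit at $\overline K$ and $J*\overline V_i-\overline V_i\to 0$, so this reads
\[
K\sum_{j=1}^n g_{ij}^0(\overline K)+\overline K\,(\mathbb{M}1_n)_i\le 0,\qquad i\in[n].
\]
That requires negative row sums of $\mathbb{M}$, which (H2) does not give. For example, $\mathbb{M}=\begin{pmatrix}-1&10\\ 1/100&-1\end{pmatrix}$ is irreducible with $\Lambda_1(\mathbb{M})=-1+1/\sqrt{10}<0$, yet its first row sum is $9$. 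No choice of $\overline K$ helps: on a long plateau with $c>0$, the frozen output $V_1$ relaxes to a level above $\overline K$.

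The left vector with $\eta^\top\mathbb{M}<0$ is the transposed condition. It controls the weighted sum $\eta^\top V$, not the individual components, so it cannot make a componentwise order interval invariant. Moreover, the scalar inequality for $\eta^\top V$ does not close when the $d_i$ differ, since $\eta^\top D(J*V-V)$ is not a function of $\eta^\top V$.

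What you need is a right vector:
\begin{itemize}
\item Set $\chi_i:=\sum_j\sup_{\xi\ge0}g_{ij}^0(\xi)+1$, which is finite by (G1).
\item Pick $\bar q>0$ with $\mathbb{M}\bar q<-K\chi^\top$. This exists because (H2) makes $-\mathbb{M}$ a nonsingular M-matrix: take $\bar q=t(-\mathbb{M})^{-1}1_n$ with $t>K\max_i\chi_i$.
\item Set $\overline V_i=\min\{\psi_{c,i}e^{\lambda_cs},\bar q_i\}$.
\end{itemize}
On the plateau of $\overline V_i$, the facts $m_{ij}\ge0$ for $j\ne i$ and $\overline V_j\le\bar q_j$ give $g_i(K,\overline V)\le K\chi_i+(\mathbb{M}\bar q)_i<0$, while $J*\overline V_i-\overline V_i\le0$. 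This is exactly the paper's Lemma~\ref{lmaoverlinev}. Since $\bar q$ does not depend on $c$, it also gives the $c$-uniform sup bound.

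\textbf{Two smaller corrections.}

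First, the error bound $C(|V|^2+(K-U))$ cannot be absorbed, because $K-\underline U=\sigma e^{\alpha s}$ with $\alpha<\lambda_c$ decays more slowly than $q e^{(\lambda_c+\epsilon)s}$. The correct bound is $g_i(U,V)\ge (D_vg(E_0)V)_i-C\big((K-U)|V|_1+|V|_1^2\big)$, since $g_i(K,V)-g_i(U,V)=(K-U)\sum_j g_{ij}^0(V_j)$. This then requires $\epsilon\le\min\{\alpha,\lambda_c\}$; the paper takes $\varepsilon=\frac12\min\{\alpha,\overline\lambda-\underline\lambda,\underline\lambda\}$. You also need $q$ large enough that $\underline V$ vanishes before $\underline U$ and $\overline V$ reach their kinks.

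Second, extend the frozen iterate by $(\underline U,\underline V)$ on the left, matching the initial data at $-L$, and by its endpoint value on the right. Do not extend by $E_0$. Taking $V^0\equiv 0_n$ on the left violates $\widetilde{V^0}\ge\underline V$. Any jump in the extension also destroys the Lipschitz bound on $J*\widetilde{p}$, which your $\mathcal{C}^{1,1}$ estimate derives from $\|J\|_{L^1}$ alone.
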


With the estimate \eqref{unibddwrtc} at hand, we can prove the existence of a traveling wave solution when $c=c^*$ by using the persistence-extinction dichotomy (Proposition \ref{lma:Persistence-Extinction}) and again the rescaling method.

We first prove the following lemma,
analogous to \cite[Lemma 5.1]{lam2018traveling},
but considering the nonlocal diffusion. This
plays a crucial role in the proof of Proposition \ref{prop:c=c*}.

\begin{lemma}\label{limsuppositive}
    Assume that $J$ satisfies {\rm (J0)--(J2)}, and that {\rm(G1)}, {\rm(H1)} and {\rm (H2)} hold. Pick a decreasing sequence $\{c_k\}$ in $(c^*,c^*+1)$ that converges to $c^*$ and denote $(U_{c_k},V_{c_k})$ as a bounded positive solution of \eqref{epidemictws} with $(U_{c_k},V_{c_k})(-\infty)=(K,0_n)$ for $k\in\mathbb{N}$ when $c=c_k$, obtained in Proposition \ref{superexistence}. 
    Then 
    \begin{align*}
        \limsup_{k\to\infty}\sup_{s\in\mathbb{R}}|K-U_{c_k}(s)|>0\quad\mbox{or}\quad\limsup_{k\to\infty}\sup_{s\in\mathbb{R}}|V_{c_k}(s)|_1>0.
    \end{align*}
\end{lemma}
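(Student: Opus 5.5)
We argue by contradiction. Suppose that both limsups vanish, so that $(U_{c_k},V_{c_k})\to(K,0_n)$ uniformly on $\mathbb{R}$. For each $k$ we normalize at a suitably chosen point and pass to the limit in the rescaled infected component. The target is a positive solution of the linearized system at $E_0$ with speed $c^*$ that \emph{fails} to vanish at $-\infty$, or fails to be increasing. This contradicts Proposition \ref{lma:increasing}, since $c^*=c_0$ for $P=D_vg(E_0)$.

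\textbf{Choice of normalization.} Since $V_{c_k}(-\infty)=0_n$ and, by Proposition \ref{prop:persist}, $\liminf_{s\to\infty}V_{c_k,i}(s)>0$, the continuous function $s\mapsto|V_{c_k}(s)|_1$ passes from $0$ to values bounded away from $0$. Fix a small threshold $\varepsilon_k:=\frac12\liminf_{s\to\infty}|V_{c_k}(s)|_1$, or alternatively half the supremum. Let $s_k$ be the last point with $|V_{c_k}(s_k)|_1=\varepsilon_k$ (or the first point), so that $|V_{c_k}(s)|_1\geq\varepsilon_k$ for $s\ge s_k$. Translate so that $s_k=0$. Set
\[
\widehat p^{(k)}(s)=\frac{V_{c_k}(s+s_k)}{|V_{c_k}(s_k)|_1}.
\]
By Lemma \ref{nonlinearHarnack}(b), whose constant $M$ is uniform for $c_k\in(c^*,c^*+1)$ by Remark \ref{rmk:uniforminc} and the fact that $c^*>0$, we get $|\widehat p^{(k)}(s)|_1\le e^{M|s|}$. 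As in Step 2 of Proposition \ref{linearexp}, using (J1) for dominated convergence and the uniform smallness of $(K-U_{c_k},V_{c_k})$, we extract a $\mathcal{C}^1_{\rm loc}$ limit $p^\infty$ with $|p^\infty(0)|_1=1$. This limit solves the linearized system with $c=c^*$, and it is positive by Lemma \ref{SMPlinearized}.

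\textbf{Contradiction.} By construction $|p^\infty(s)|_1\ge1$ for all $s\ge0$. This alone is consistent with an increasing solution, so the contradiction must come from the other side, which requires a better normalization. The alternative is to choose $s_k$ where $|V_{c_k}|_1$ attains, or nearly attains, its supremum $\sigma_k\to0$. Then $|\widehat p^{(k)}|_1\le 1+o(1)$ on all of $\mathbb{R}$, so $p^\infty$ is bounded on $\mathbb{R}$ with $|p^\infty(0)|_1=1$. By Proposition \ref{lma:increasing}, each $p^\infty_i$ is strictly increasing with growth rate at least $\lambda>0$, namely $p_i'\ge\lambda p_i$. Then $p^\infty$ must be unbounded as $s\to\infty$, a contradiction. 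If the supremum is not attained, we pick $s_k$ with $|V_{c_k}(s_k)|_1\ge\sigma_k/2$, which gives the bound $|p^\infty|_1\le2$, and the same contradiction follows.

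\textbf{Main obstacle.} The delicate step is justifying that the rescaled limit solves the \emph{linear} system. This needs $g(U_{c_k},V_{c_k})(\cdot+s_k)/|V_{c_k}(s_k)|_1\to D_vg(E_0)p^\infty$ locally uniformly. Here we use the uniform convergence $U_{c_k}\to K$, $V_{c_k}\to0$ assumed for contradiction, together with the $\mathcal C^2$ regularity of $g^0_{ij}$: the remainder is $O(|K-U|+|V|_1)\cdot|V|_1$. For the nonlocal term, the tails are controlled by the exponential Harnack bound combined with (J1). The uniformity in $k$ of the Harnack constant near $c^*$ is exactly what Remark \ref{rmk:uniforminc} provides.
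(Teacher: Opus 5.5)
Your final argument is correct. It normalizes at a point $s_k$ with $|V_{c_k}(s_k)|_1\ge\frac12\sup_{\mathbb{R}}|V_{c_k}|_1$, and this is a genuinely different, and shorter, route than the paper's.

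\textbf{What to cut.} The threshold normalization in your first paragraph is, as you note yourself, a dead end and should be deleted. So should the appeals to Lemma \ref{nonlinearHarnack}(b), Remark \ref{rmk:uniforminc} and the (J1) tail control. Once $|\widehat p^{(k)}|_1\le 2$ holds on all of $\mathbb{R}$, a uniform derivative bound follows from the equation together with $\mathbb{M}V\le g(U,V)\le D_vg(E_0)V$. Dominated convergence is then immediate, and the remainder $O(|K-U|+|V|_1)|V|_1$ vanishes uniformly after division by $|V_{c_k}(s_k)|_1$.

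\textbf{How the paper argues.} It proceeds in two stages.
First, it shows that $V_{c_k}'\ge 0_n$ on $\mathbb{R}$ for large $k$. Otherwise some $(V_{c_k})_{i_0}'$ vanishes at a point $s_k$, and the paper rescales there. Only local exponential bounds are available at such a point, which is why the Harnack constant must be uniform in $c_k$. The limit satisfies $(p^\infty)_{i_0}'(0)=0$, contradicting Proposition \ref{lma:increasing}.
Second, it lets $s\to+\infty$. It uses \eqref{unibddwrtc} to get $V_{c_k}'(\infty)=0_n$, hence $g(U_{c_k},V_{c_k})(s)\to 0_n$. It uses Proposition \ref{prop:persist} to ensure $V_{c_k}(\infty)\neq 0_n$. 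Linearizing and normalizing then yields a nonnegative unit null vector of $D_vg(E_0)$, contradicting $\Lambda_1(D_vg(E_0))>0$.

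\textbf{Comparison.} Your normalization uses the global smallness hypothesis directly and produces a globally bounded limit in one step. It therefore dispenses with eventual monotonicity, persistence, the $\mathcal{C}^{1,1}$ bound and uniformity in $c$. The paper's version stays within the local-rescaling template used in Lemma \ref{nonlinearexp} and Proposition \ref{prop:persist}, but it needs all of those extra inputs.

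\textbf{One small point.} The inequality $p_i'\ge\lambda p_i$ with $\lambda>0$ is established in the \emph{proof} of Proposition \ref{lma:increasing}, via Proposition \ref{linearexp} and Lemma \ref{lmalambda1}, not in its statement. If you prefer to cite only the statement, a bounded, strictly increasing positive solution still gives a contradiction. Letting $s\to\infty$ in the linear system forces $D_vg(E_0)p^\infty(\infty)=0_n$ with $p^\infty(\infty)>0_n$, which Perron--Frobenius rules out since $\Lambda_1(D_vg(E_0))>0$.
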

\begin{proof}
    Suppose the contrary, that is, as $k\to\infty,$
    \begin{align}\label{supto0}
        \sup_{s\in\mathbb{R}}|K-U_{c_k}(s)|\to 0\quad\mbox{and}\quad\sup_{s\in\mathbb{R}}|V_{c_k}(s)|_1\to 0.
    \end{align}
    We claim that under the hypothesis \eqref{supto0}, there is a $k_0\in\mathbb{N}$ such that
    \begin{align}\label{Vcknondecreasing}
        V_{c_k}'\geq 0_n\mbox{ on }\mathbb{R}\mbox{ for each }k\ge k_0.
    \end{align}
    Suppose that \eqref{Vcknondecreasing} does not hold. Then, by passing to a subsequence and relabeling if necessary, there exist an $i_0\in[n]$ and a sequence $\{\zeta_k\}$ in $\mathbb{R}$ such that $(V_{c_k})_{i_0}'(\zeta_k)<0$ for all $k\in\mathbb N$. Moreover, since $(V_{c_k})_{i_0}(-\infty)=0$ and $(V_{c_k})_{i_0}(\zeta_k)>0$, there 
    is a sequence $\{s_k\}$ such that
    $$(V_{c_k})_{i_0}'(s_k)=0\mbox{ for all }k\in\mathbb{N}.$$
    Consider the sequence
    \begin{align*}
        \widehat{p}^{(k)}(s):=\frac{V_{c_k}(s+s_k)}{|V_{c_k}(s_k)|_1}.
    \end{align*}
    In view of Remark \ref{rmk:uniforminc},
    the constant $M$ in Lemma \ref{nonlinearHarnack} (a) can be chosen to be independent of $c\in(c^*,c^*+1)$.
    Also, \eqref{supto0} implies $|V_{c_k}(s_k)|_1\to 0$ and $U_{c_k}(\cdot+s_k)\to K$ uniformly on $\mathbb{R}$ as $k\to\infty.$
    Then the sequence $\{\widehat{p}^{(k)}\}$ converges in $\mathcal{C}^1_{\mathrm{loc}}(\mathbb{R})$, up to a diagonal extraction, to a positive solution $p^\infty$ of the linear system
        $$c^*(p^\infty)'(s)=D\left(\int_{\mathbb{R}}J(s-s')p^\infty(s')ds'-p^\infty(s)\right)+D_vg(E_0)p^\infty(s),\quad s\in\mathbb{R},$$
        with $|p^\infty(0)|_1=1$ and $(p^\infty)_{i_0}'(0)=0.$
        However, Proposition \ref{lma:increasing} deduces that $(p^\infty)_{i_0}'(0)>0,$ which is a contradiction.


        Consequently, \eqref{Vcknondecreasing} holds. Namely,
        for each $k\geq k_0$ and each $i\in[n],$ the function $(V_{c_k})_i$ is nondecreasing on $\mathbb{R}.$ 
        Since $V_{c_k}$ is bounded on $\mathbb{R}$, $V_{c_k}(\infty)$ exists and is finite for each $k\ge k_0$.
        On the other hand, since $V_{c_k}'\geq 0_n\mbox{ on }\mathbb{R}$ and $V_{c_k}(-\infty)=0_n$ for each $k\ge k_0,$ we see that $(V_{c_k})'_i$ is Lebesgue integrable on $\mathbb{R}$ for each $k\ge k_0$ and each $i\in[n]$. 
        Furthermore, in view of \eqref{unibddwrtc}, each $V_{c_k}'$ is uniformly continuous on $\mathbb{R}$.
        Therefore,
        \begin{align}\label{Vck'infty=0}
            V_{c_k}'(\infty)=0_n\mbox{ for each }k\geq k_0.
        \end{align}
        On the other hand, since each $V_{c_k}$ is bounded on $\mathbb{R},$ it is easy to verify that
        \begin{align}\label{J*v-limit}
            \lim\limits_{s\to\infty}\int_{\mathbb{R}}J(s')V_{c_k}(s-s')ds'=V_{c_k}(\infty)\mbox{ for each }k\ge k_0.
         \end{align}
        Combining \eqref{Vck'infty=0}, \eqref{J*v-limit} and the equation of $V$, we obtain that for all $k\geq k_0$,
\begin{equation}\label{g-to-0}
    g\Big(U_{c_k}(s),V_{c_k}(s)\Big)\to0_n
    \quad\mbox{as }s\to\infty.
\end{equation}

Since $g(\cdot,0_n)\equiv 0_n$ and $g\in \mathcal{C}^1$, by
\eqref{supto0}, we have
\begin{align*}
  g\Big(U_{c_k}(s),V_{c_k}(s)\Big)=D_vg(E_0)V_{c_k}(s)+E_k(s),  
\end{align*}
where there exists $\rho_k\to0$ such that
\[
|E_k(s)|_1\leq \rho_k | V_{c_k}(s)|_1 \quad \text{ for all } s\in\mathbb{R}. 
\]
Together with \eqref{g-to-0}, we obtain
        \begin{align}\label{o(1)}
            D_vg(E_0)V_{c_k}(\infty)=o(|V_{c_k}(\infty)|_1) \quad \text{ as }  k\to\infty.
        \end{align}


        In view of Proposition \ref{prop:persist}, we can consider the bounded sequence of vectors
        \begin{align*}
            q_k:=\frac{V_{c_k}(\infty)}{|V_{c_k}(\infty)|_1},\quad k\geq k_0.
        \end{align*}
        Thanks to \eqref{o(1)}, we have 
        \[
        D_vg(E_0)q_k=o(1) \quad \text{ as }  k\to\infty.
        \]
        By passing to a subsequence, $q_k\to q_\infty$ as $k\to\infty$ for some nonnegative vector $q_\infty\in\mathbb{R}^n$ with
        $$D_vg(E_0)q_\infty=0_n,\quad|q_\infty|_1=1.$$
        Since $D_vg(E_0)$ is irreducible, we see that $q_\infty$ is a Perron eigenvector of $D_vg(E_0)$ corresponding to the principal eigenvalue $\Lambda_1(D_vg(E_0))=0$, which contradicts $\Lambda_1(D_vg(E_0))>0$.
        This completes the proof.
\end{proof}

The following result provides the existence of a critical wave.

\begin{proposition}\label{prop:c=c*}
Assume that $J$ satisfies {\rm (J0)--(J2)}, and that {\rm(G1)}, {\rm(H1)} and {\rm (H2)} hold. Then \eqref{epidemictws} admits a bounded positive solution $(U,V)$ satisfying $(U,V)(-\infty)=E_0$ when $c=c^*.$
\end{proposition}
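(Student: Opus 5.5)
We obtain the critical wave as a limit of the supercritical waves of Proposition \ref{superexistence}, normalized by suitable translations. Take a decreasing sequence $c_k\in(c^*,c^*+1)$ with $c_k\to c^*$, and let $(U_k,V_k):=(U_{c_k},V_{c_k})$ be the waves given there. By \eqref{unibddwrtc} they are bounded in $\mathcal{C}^{1,1}(\mathbb{R})$ uniformly in $k$. By Lemma \ref{limsuppositive}, after passing to a subsequence we may assume one of two alternatives:
\[
\sup_{s}\bigl(K-U_k(s)\bigr)\geq 2\varepsilon_0 \quad\text{or}\quad \sup_s|V_k(s)|_1\geq 2\varepsilon_0,
\]
for some $\varepsilon_0>0$ and all $k$. (Recall $U_k\le K$ by Lemma \ref{lmapositive}(a).)

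We fix the translation as follows. Let $\eta:=\eta_{c^*}>0$ be the constant of Proposition \ref{lma:Persistence-Extinction} for $c=c^*$. Set $\varepsilon:=\min\{\varepsilon_0,\eta/2\}$, possibly shrunk further as explained below. Define
\[
s_k:=\sup\bigl\{s:\ K-U_k(\tau)\le \varepsilon\ \text{and}\ |V_k(\tau)|_1\le\varepsilon\ \text{for all }\tau\le s\bigr\}.
\]
This is finite, because $(U_k,V_k)(-\infty)=E_0$ and the $\sup$-bound above rules out $s_k=+\infty$. By continuity, at $s_k$ either $K-U_k(s_k)=\varepsilon$ or $|V_k(s_k)|_1=\varepsilon$. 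We translate, $(\tilde U_k,\tilde V_k):=(U_k,V_k)(\cdot+s_k)$, and use Arzel\`a--Ascoli with a diagonal extraction to pass to a limit $(U,V)$ in $\mathcal{C}^1_{\rm loc}$. The nonlocal term converges by dominated convergence, since the functions are uniformly bounded and $J\in L^1$. Hence $(U,V)$ is a bounded nonnegative solution of \eqref{epidemictws} with $c=c^*$, and it satisfies $K-U\le\varepsilon$ and $|V|_1\le\varepsilon$ on $(-\infty,0]$, with equality in one of them at $s=0$.

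Next we identify the behaviour at $-\infty$. Since $|V(s)|_1\le\varepsilon<\eta$ on $(-\infty,0)$, alternative (ii) of Proposition \ref{lma:Persistence-Extinction} fails, so alternative (i) gives $V(-\infty)=0_n$. For $U$, observe that on $(-\infty,0]$ the $U$-equation is a perturbation of $c^*U'=d_0(J*U-U)+\delta(K-U)$ by $g_0(U,V)\to0$. We prove $U(-\infty)=K$ by the same rescaling/limit argument used in Proposition \ref{prop:persist}. Take any sequence $\tau_j\to-\infty$; the limit of $(U,V)(\cdot+\tau_j)$ is a bounded entire solution $(U^*,0_n)$. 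Then $U^*$ solves the scalar linear equation for $K-U^*$, namely
\[
c^*(K-U^*)'=d_0\bigl(J*(K-U^*)-(K-U^*)\bigr)-\delta(K-U^*),
\]
with $0\le K-U^*\le\varepsilon$. Any bounded entire solution of this equation vanishes: consider its supremum (or use the weak maximum principle, Proposition \ref{maxprinciple}, in time for $u(x,t)=W(x+c^*t)$, which gives decay like $e^{-\delta t}$). Hence $U(-\infty)=K$.

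It remains to show that $(U,V)\not\equiv E_0$, since positivity then follows from Lemma \ref{lmapositive}(b). If $|V(0)|_1=\varepsilon>0$ we are done. Otherwise $K-U(0)=\varepsilon$. If in addition $V\equiv0_n$, then $U$ is a bounded entire solution of the scalar linear problem above, which forces $U\equiv K$, a contradiction. This nontriviality at the normalization point is the delicate step, and it is exactly why the threshold normalization uses both components together with the Lemma \ref{limsuppositive} alternative. The remaining subtle point is that the limit might lose $V$-positivity. This is excluded by Lemma \ref{lmapositive}(b): the limit is either positive or identically $E_0$, and we have ruled out the latter. Finally, persistence at $+\infty$ (the boundary condition $\liminf>0$) follows from Proposition \ref{prop:persist}, since $c^*\ge c^*$. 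Thus $(U,V)$ is a traveling wave with speed $c^*$.
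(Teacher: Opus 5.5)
Your proposal is correct and follows essentially the same route as the paper. You normalize the supercritical waves at the first point where $\max\{K-U_{c_k},|V_{c_k}|_1\}$ reaches a threshold $\varepsilon<\eta_{c^*}$ (possible by Lemma \ref{limsuppositive}) and pass to the limit via \eqref{unibddwrtc}. You then get $V(-\infty)=0_n$ from Proposition \ref{lma:Persistence-Extinction}, $U(-\infty)=K$ from a translation-limit argument, and positivity from Lemma \ref{lmapositive}(b).

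The differences are cosmetic. You check directly that bounded solutions of the scalar linear $W$-equation vanish, whereas the paper cites Lemma \ref{lmapositive}(b), whose proof contains the same fluctuation argument. Your case split in the nontriviality step is unnecessary, since $U(0)=K-\varepsilon\neq K$ already rules out $(U,V)\equiv E_0$.
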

\begin{proof}
Pick the sequence $\{(U_{c_k},V_{c_k})\}$ as in Lemma \ref{limsuppositive} and let
\[
\gamma_0:=\limsup_{k\to\infty} \max\Big\{\sup_{s\in\mathbb{R}}|K-U_{c_k}(s)|,\sup_{s\in\mathbb{R}}|V_{c_k}(s)|_1\Big\}.
\]
By Lemma~\ref{limsuppositive}, we see that $\gamma_0>0$.

Set $\varepsilon\in(0,\min\{\frac{\gamma_0}{2},\frac{\eta_{c^*}}{2},\frac{K}{2}\})$ small enough, where $\eta_{c^*}$ is chosen as in Proposition \ref{lma:Persistence-Extinction} for $c=c^*$.

Since $\gamma_0>2\varepsilon$, by passing to a subsequence, still denoted by $\{c_k\}$, we may assume that
\begin{align}\label{>2ep}
 \max\left\{\sup_{s\in\mathbb R}|K-U_{c_k}(s)|,\, \sup_{s\in\mathbb R}|V_{c_k}(s)|_1
\right\}
>2\varepsilon,\quad \text{ for all } k\in\mathbb{N}.   
\end{align}
For each $k\in\mathbb N$, set
\[
\mathcal{A}_k(s):=
\max\left\{ |K-U_{c_k}(s)|, |V_{c_k}(s)|_1 \right\},
\quad s\in\mathbb{R}.
\]

        Since $(U_{c_k},V_{c_k})(-\infty)=(K,0_n)$ for each $k\in\mathbb{N},$ we have $\mathcal{A}_k(-\infty)=0$. Moreover, by \eqref{>2ep}, we obtain $\sup_{s\in\R}\mathcal{A}_k(s)>2\varepsilon$. Therefore, we can define 
        \[
        t_k:=\inf\{s\in\mathbb{R}: \mathcal{A}_k(s)\geq \varepsilon\}.
        \]
        By the continuity of $\mathcal{A}_k(\cdot)$, we have
        \[
        \mathcal{A}_k(t_k)=\varepsilon,\quad  \mathcal{A}_k(t_k+s)<\varepsilon\quad \text{for all } s<0.
        \]
        Consequently,
        we have
        $$U_{c_k}(t_k+s)>K-\varepsilon\quad\mbox{and}\quad |V_{c_k}(t_k+s)|_1<\varepsilon\quad\mbox{for all }s<0$$
        and
        $$U_{c_k}(t_k)=K-\varepsilon\quad\mbox{or}\quad |V_{c_k}(t_k)|_1=\varepsilon.$$
        In view of \eqref{unibddwrtc}, the sequence  $\{(U_{c_k},V_{c_k})(\cdot+t_k)\}$ converges, up to a diagonal extraction, to some nonnegative solution $(U,V)$ of \eqref{epidemictws} with $c=c^*$ in $\mathcal{C}^1_{\mathrm{loc}}(\mathbb{R})$ such that
        \begin{align*}
            U(s)\geq K-\varepsilon\quad\mbox{and}\quad |V(s)|_1\leq \varepsilon\quad\mbox{for all }s<0
        \end{align*}
        and that 
        \begin{align*}
            U(0)=K-\varepsilon\quad\mbox{or}\quad |V(0)|_1=\varepsilon.
        \end{align*}
        By Lemma \ref{lmapositive}, $(U,V)$ is positive on $\mathbb{R}.$
        Furthermore, since $\inf\limits_{s\in(-\infty,0)}|V(s)|_1\leq\varepsilon<\eta_{c^*},$
        Proposition \ref{lma:Persistence-Extinction} deduces that $V(-\infty)=0_n$.

Finally, we claim that $U(-\infty)=K.$
This can be done similarly as in Step 4 of the proof of \cite[Theorem 3.2]{yang2018traveling}.
    Suppose that $\underline{U}_*:=\liminf\limits_{s\to-\infty}U(s)<K.$ Choose a sequence $r_k\to-\infty$ such that $U(r_k)\to\underline{U}_*$ as $k\to\infty.$ Let $(U^{(k)},V^{(k)})(s):=(U,V)(s+r_k).$ Then $(U^{(k)},V^{(k)})(s)$ converges to $(K,0_n)$, up to a diagonal extraction, in $\mathcal{C}^1_{\mathrm{loc}}(\mathbb{R})$ by \eqref{unibddwrtc} and Lemma \ref{lmapositive} (b). In particular, $U^{(k)}(0)=U(r_k)\to K=\underline{U}_*$ as $k\to\infty,$ a contradiction. Therefore, 
    $$\liminf\limits_{s\to-\infty}U(s)\geq K\geq\limsup_{s\to-\infty}U(s)$$
    and so $U(-\infty)=K$.
We then conclude that $(U,V)$ is a bounded positive solution satisfying $(U,V)(-\infty)=E_0$ when $c=c^*.$
\end{proof}

Finally, 
we close this section by proving Theorem~\ref{thm1} as follows.

\begin{proof}[Proof of Theorem~\ref{thm1}]
Parts (a) and (b) follow from Proposition \ref{prop:thm1-a}, while part (c) follows from Propositions \ref{prop:persist}, \ref{superexistence} and \ref{prop:c=c*}.
\end{proof}

\section{Asymptotic behaviors at $-\infty$}\label{sectionconvergencerate}

In this section, we establish the asymptotic behaviors of $(U,V)$ at $-\infty$ via Ikehara's theorem.
To make the Laplace transforms legitimate, we need the following results for exponential boundedness of $(U,V)$.

\begin{lemma}\label{lem:for-Laplace}
Assume that $J$ satisfies {\rm (J0)--(J2)} and that $D_vg(E_0)$ is irreducible with $\Lambda_1(D_vg(E_0))>0$. Let $c\geq c^*$ (where $c^*$ is defined as in \eqref{thm1c*}). Also, assume that \eqref{epidemictws} admits a bounded positive solution $(U,V)$ on $\mathbb{R}$ with $(U,V)(-\infty)=E_0$.
    Then for any $\varepsilon\in(0,\lambda_c),$
    we have
    \begin{align}\label{V-sup}
     \sup_{s\in\mathbb{R}}e^{-(\lambda_c-\varepsilon)s}|V(s)|_1<\infty \quad\mbox{and}\quad\sup_{s\in\mathbb{R}}e^{-(\lambda_c-\varepsilon)s}|V'(s)|_1<\infty,   
    \end{align}
    where $\lambda=\lambda_c$ is the smallest positive root of $\Lambda_1(H_{\lambda,c}+D_vg(E_0))=0.$ 
    Furthermore, for any $\mu_0\in(0,\min\{\nu_c,\lambda_c\})$, 
    \begin{align}\label{U-sup}
        \sup_{s\in\mathbb{R}}e^{-\mu_0s}(K-U(s))<\infty.
    \end{align}
\end{lemma}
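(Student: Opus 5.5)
The plan is to use the same quantity that drives Lemma \ref{nonlinearexp}, but now to bound it from below rather than merely show it is finite. By Proposition \ref{lmacpositive}, $c>0$. Set
\[
\underline{\lambda}:=\min_{i\in[n]}\liminf_{s\to-\infty}\frac{V_i'(s)}{V_i(s)} .
\]
The proof of Lemma \ref{nonlinearexp} shows that $\underline{\lambda}$ is finite. It also shows, by rescaling along a sequence $s_k\to-\infty$ that realizes the $\liminf$, that $\Lambda_1(H_{\underline{\lambda},c}+D_vg(E_0))=0$. Since $c\ge c^*>0$, Lemma \ref{lmalambda1}(b)(iv)--(v) says that every root is positive. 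In particular $\underline{\lambda}\ge\lambda_c$, because $\lambda_c$ is the smallest positive root.

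From this, for each $\varepsilon\in(0,\lambda_c)$ there is $s_\varepsilon$ such that $V_i'\ge(\lambda_c-\varepsilon)V_i$ on $(-\infty,s_\varepsilon]$ for every $i$. Integrating gives
\[
V_i(s)\le V_i(s_\varepsilon)\,e^{(\lambda_c-\varepsilon)(s-s_\varepsilon)}\quad\text{for } s\le s_\varepsilon .
\]
For $s\ge s_\varepsilon$, the boundedness of $V$ gives the bound trivially, since $e^{-(\lambda_c-\varepsilon)s}$ is bounded there. This proves the first part of \eqref{V-sup}.

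For $V'$, I use the equation
\[
cV'=D(J*V-V)+g(U,V),\qquad 0\le g\le D_vg(E_0)V .
\]
By (J1) and (J2), $J*(e^{(\lambda_c-\varepsilon)\cdot})$ equals a constant times $e^{(\lambda_c-\varepsilon)s}$. Combined with the bound on $V$ just obtained, this bounds $|J*V|_1$ by $Ce^{(\lambda_c-\varepsilon)s}$, and the second part of \eqref{V-sup} follows. One point needs care: the argument of Lemma \ref{nonlinearexp} must give the limit equation with $D_vg(E_0)$. I would re-check this by expanding $g(U,V)=D_vg(E_0)V+o(|V|_1)$ near $E_0$ and using the $\ell^1$ Harnack bound in Lemma \ref{nonlinearHarnack}(b).

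For \eqref{U-sup}, I set $W:=K-U\ge 0$, using Lemma \ref{lmapositive}(a). It satisfies
\[
cW'=d_0(J*W-W)-\delta W+g_0(U,V),\qquad 0\le g_0\le C|V|_1\le C_1e^{(\lambda_c-\varepsilon)s},
\]
where I choose $\varepsilon$ so that $\lambda_c-\varepsilon>\mu_0$. The comparison function is $\overline W(s)=Ae^{\mu_0 s}$. Because $\mu_0<\nu_c$, we have $\Delta_c(\mu_0)=c\mu_0-d_0m(\mu_0)+\delta>0$. Hence
\[
c\overline W'-d_0(J*\overline W-\overline W)+\delta\overline W=\Delta_c(\mu_0)\,\overline W ,
\]
and this dominates $C_1e^{(\lambda_c-\varepsilon)s}$ on $(-\infty,s_0]$ once $A$ is large. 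On $[s_0,\infty)$ I take $A$ large enough that $\overline W\ge K\ge W$. I then apply the weak maximum principle (Proposition \ref{maxprinciple}) to $w(x,t)=\overline W-W$, evaluated at $x+ct$ as in the proof of Proposition \ref{prop:thm1-a}.

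This last step is where I expect the main obstacle. $\overline W$ is unbounded as $s\to+\infty$, so I would replace it by $\min\{Ae^{\mu_0 s},K\}$. This cut-off version is a generalized supersolution: the cap $K$ is itself a supersolution because $W\le K$ and $g_0$ is bounded, and the derivative is irregular only at a corner. Handling the corner requires either a version of the maximum principle that allows such kinks, or a direct argument at a first touching point.

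As a safer alternative, I would use a sliding argument. Let $\tau^*:=\inf\{\tau:\overline W(\cdot+\tau)\ge W\}$; this is finite because $W(-\infty)=0$ and $W\le K$. At a touching point $s^*$, where $W$ attains its minimum, I get
\[
0\le c(\overline W-W)'-d_0\big(J*(\overline W-W)\big)+(d_0+\delta)(\overline W-W)<0 ,
\]
which is a contradiction unless $s^*=\pm\infty$. The cases $s^*=\pm\infty$ are handled by $W(-\infty)=0$ and by the cap.
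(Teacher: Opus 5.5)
Your proof of \eqref{V-sup} is the paper's argument. Your $\underline{\lambda}$ is a root of $\Lambda_1(H_{\lambda,c}+D_vg(E_0))=0$ by Lemma \ref{nonlinearexp}, so $\underline{\lambda}\ge\lambda_c$, and integration plus the equation give both bounds. One small slip: $g(U,V)\ge 0$ fails in general because $m_{ii}\le 0$. What holds is $\mathbb{M}V\le g(U,V)\le D_vg(E_0)V$, which still gives $|g(U,V)|_1\le C|V|_1$. The supersolution $\overline W=Ae^{\mu_0 s}$ with $\Delta_c(\mu_0)>0$ is also the paper's.

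The gap is the comparison $W\le\overline W$: both mechanisms you offer presuppose the conclusion. Proposition \ref{maxprinciple} applied to $w(x,t)=(\overline W-W)(x+ct)$ needs $w(\cdot,0)\ge 0$, i.e.\ $\overline W\ge W$ on all of $\mathbb{R}$. Since the profile is stationary in the moving frame, the time evolution gives nothing beyond that initial ordering. In the sliding version, the set $\{\tau:\overline W(\cdot+\tau)\ge W\}$ is nonempty only if $W(s)\le Ae^{\mu_0\tau}e^{\mu_0 s}$ as $s\to-\infty$, which is \eqref{U-sup} itself. The facts $W(-\infty)=0$ and $W\le K$ carry no rate information. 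For the same reason, touching at $s^*=-\infty$, where both functions vanish, is not excluded by $W(-\infty)=0$. Sliding to $\tau<0$ also shrinks the amplitude to $Ae^{\mu_0\tau}$, which can destroy the inequality $Ae^{\mu_0\tau}\Delta_c(\mu_0)e^{\mu_0 s}\ge g_0$.

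The missing observation is that the absorption term $\delta W$ with $\delta>0$ makes sliding unnecessary. Fix $A$ so that $A\Delta_c(\mu_0)e^{\mu_0 s}\ge g_0(U(s),V(s))$ for all $s\in\mathbb{R}$. This is possible because $g_0$ is bounded and $g_0=O(e^{(\lambda_c-\varepsilon)s})$ at $-\infty$ with $\lambda_c-\varepsilon>\mu_0$, so no cap is needed. Then $Z:=W-\overline W$ satisfies $cZ'-d_0(J*Z-Z)+\delta Z\le 0$ on $\mathbb{R}$, where $J*\overline W=(m(\mu_0)+1)\overline W$ is finite by (J1). Moreover $Z\le K$, $Z(-\infty)=0$, and $Z(s)\to-\infty$ as $s\to\infty$. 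If $\sup Z>0$, it is attained at some finite $s^*$. There $Z'(s^*)=0$ and $(J*Z)(s^*)\le Z(s^*)$, so the left-hand side is at least $\delta Z(s^*)>0$, a contradiction. Your split choice of $A$ also works: then $Z\le 0$ on $[s_0,\infty)$, so a positive maximum lies in $(-\infty,s_0)$. This argument uses only $W(-\infty)=0$, not a decay rate, and it is the ``standard comparison argument'' the paper invokes. Your touching-point computation is the right local step, but it must be applied at the maximum of $W-\overline W$ for fixed $A$, not along a sliding family.
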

\begin{proof}
    Recall from the proof of Lemma \ref{nonlinearexp} that the quantity
$$\lambda_*:=\min_{i\in[n]}\liminf_{s\to-\infty}\frac{V_i'(s)}{V_i(s)}$$
    is finite and satisfies $\Lambda_1(H_{\lambda_*,c}+D_vg(E_0))=0.$ Since $c\geq c^*,$ we have $\lambda_*\geq\lambda_c>0.$ However, by the definition of $\lambda_*,$ for any $\varepsilon\in(0,\lambda_c)$, there is an $M_\varepsilon>0$ such that $$\frac{V_i'(s)}{V_i(s)}>\lambda_*-\varepsilon\mbox{ for each }i\in[n]\mbox{ and each }s<-M_\varepsilon.$$
    Then
    $$\ln(V_i(-M_\varepsilon))>\ln(V_i(s))+(\lambda_*-\varepsilon)(-M_\varepsilon-s)\mbox{ for each }i\in[n]\mbox{ and each }s<-M_\varepsilon.$$
    Thus,
    $$e^{-(\lambda_*-\varepsilon)s}V_i(s)<e^{(\lambda_*-\varepsilon)M_\varepsilon}V_i(-M_\varepsilon)\mbox{ for each }i\in[n]\mbox{ and each }s<-M_\varepsilon.$$
    Since $V$ is bounded on $\mathbb{R},$ we see that 
    $\sup_{s\in\mathbb{R}}e^{-(\lambda_c-\varepsilon)s}|V(s)|_1<\infty.$
    Furthermore, by the equation satisfied by $V,$ we have 
    $\sup_{s\in\mathbb{R}}e^{-(\lambda_c-\varepsilon)s}|V'(s)|_1<\infty.$ This completes the proof of \eqref{V-sup}.


We now prove \eqref{U-sup}. Set
$W(s):=K-U(s)$. Then $W$ satisfies
\begin{align}\label{W-equation}
cW'(s)-d_0\left(
\int_{\mathbb R}J(s-s')W(s')ds'-W(s)\right)
+\delta W(s)=g_0(U(s),V(s))\geq 0,\quad s\in\mathbb{R}.
\end{align}
By Lemma \ref{lmapositive} (a), $W\geq0$ on $\mathbb R$. Also, by assumptions (C1), (C2) and (C4), together with $0<U\leq K$,
there exists $C_0\ge 0$ such that
\begin{align}\label{g0sublinear}
    0\leq g_0(U(s),V(s))\leq C_0|V(s)|_1\quad \text{for all } s\in\mathbb{R}.
\end{align}
Then \eqref{V-sup} and \eqref{g0sublinear} together give
\begin{align}\label{g0decay}
    g_0(U(s),V(s))=O(e^{\kappa s})\mbox{ as }s\to-\infty \quad \text{for any} \ \kappa\in(0,\lambda_c). 
\end{align}
We now choose any $\mu_0$ such that
$0<\mu_0<\min\{\lambda_c,\nu_c\}$.
Since $\Delta_c(\lambda)>0$ for $\lambda\in(0,\nu_c)$, we have
\[
\Delta_c(\mu_0)
=
c\mu_0-d_0m(\mu_0)+\delta>0.
\]
Furthermore, $g_0(U,V)(\cdot)$ is bounded on $\mathbb R$, which together with $\kappa>\mu_0$ and \eqref{g0decay} implies 
\[
\sup_{s\in\mathbb R}e^{-\mu_0s}g_0(U(s),V(s))<\infty.
\]

We now choose $A>0$ such that
\[
g_0(U(s),V(s))\leq A\Delta_c(\mu_0)e^{\mu_0s}
\quad\text{for all }s\in\mathbb R.
\]
Let
$\overline W(s):=Ae^{\mu_0s}$.
A direct calculation yields 
\[
c\overline W'(s)-d_0\Big(\int_{\mathbb R}J(s-s')\overline  W(s')ds'-\overline W(s)\Big)
+\delta\overline W(s)
=
A\Delta_c(\mu_0)e^{\mu_0s}
\geq g_0(U(s),V(s)).
\]
Together with $\overline W(-\infty)=0$ and 
$\overline W(\infty)=\infty$, by a standard comparison argument, we obtain
\[
0\leq K-U(s)=W(s)\leq Ae^{\mu_0s}=\overline{W}(s)
\quad\text{for all}\quad s\in\mathbb R.
\]
This completes the proof.
\end{proof}

\begin{remark}
 In the study of traveling waves for non-monotone nonlocal diffusion systems, establishing exponential boundedness such as 
 \eqref{V-sup} typically requires intricate estimates (see, e.g. \cite[Lemma 4.5]{diekmann1978bounded}).
In contrast, our proof utilizes the $\ell^1$-norm Harnack-type inequality to provide a 
short proof, achieving a 
sharper exponential bound.
\end{remark}

We state Ikehara's theorem as follows, which can be easily obtained by performing a change of variables $y=-x$ from the version in \cite[Proposition 2.3]{carr2004uniqueness}.
\begin{lemma}[Ikehara's Theorem]
     Let $f:(-\infty,0)\to(0,\infty)$ be a strictly increasing function satisfying 
that there is a $\lambda_*\in(0,\infty),k\in(-1,\infty)$ and an analytic function $h$ in the strip $0<\mathrm{Re}~\lambda\leq\lambda_*$
such that 
    \begin{align*}
        \int_{-\infty}^0e^{-\lambda s}f(s)ds=\frac{h(\lambda)}{(\lambda_*-\lambda)^{k+1}}\mbox{ for }0<\mathrm{Re}~\lambda<\lambda_*.
    \end{align*}
    Then we have 
    \begin{align*}
        \lim\limits_{s\to-\infty}\frac{f(s)}{e^{\lambda_* s}|s|^k}=\frac{h(\lambda_*)}{\Gamma(k+1)}.
    \end{align*}
\end{lemma}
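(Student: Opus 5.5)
The plan is to reduce the statement to the version of Ikehara's theorem in \cite[Proposition 2.3]{carr2004uniqueness} by the reflection $y=-x$, and then check that every hypothesis transfers. Given $f:(-\infty,0)\to(0,\infty)$ strictly increasing, I will set
\[
u(x):=f(-x),\qquad x\in(0,\infty).
\]
Then $u$ is positive and strictly decreasing on $(0,\infty)$, which is the monotonicity direction that Carr and Chmaj require. For $0<\mathrm{Re}~\lambda<\lambda_*$, substituting $s=-x$ in the transform gives
\[
\int_{-\infty}^0 e^{-\lambda s}f(s)\,ds=\int_0^\infty e^{\lambda x}u(x)\,dx=\frac{h(\lambda)}{(\lambda_*-\lambda)^{k+1}}.
\]
The constants $\lambda_*>0$ and $k>-1$ and the function $h$ are unchanged. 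The analyticity of $h$ on the strip $0<\mathrm{Re}~\lambda\le\lambda_*$, including the line $\mathrm{Re}~\lambda=\lambda_*$, is used exactly as stated.

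Next, I will apply \cite[Proposition 2.3]{carr2004uniqueness} to $u$. It yields
\[
\lim_{x\to\infty}\frac{u(x)}{x^k e^{-\lambda_* x}}=\frac{h(\lambda_*)}{\Gamma(k+1)}.
\]
Writing $x=-s=|s|$ with $s\to-\infty$, we have $u(x)=f(s)$, $x^k=|s|^k$ and $e^{-\lambda_* x}=e^{\lambda_* s}$. This is precisely the asserted limit $f(s)/(e^{\lambda_* s}|s|^k)\to h(\lambda_*)/\Gamma(k+1)$.

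The only real obstacle is bookkeeping: matching the sign convention of the transform (whether $e^{\lambda x}$ or $e^{-\lambda x}$), the orientation of the strip, and the direction of monotonicity in the cited proposition. If the cited statement is phrased with $e^{-\lambda x}$ and a pole at $\lambda=-\lambda_*$, I would first substitute $\lambda\mapsto-\lambda$ in $h$. This preserves analyticity on the reflected strip and leaves $\Gamma(k+1)$ and the value $h(\lambda_*)$ unchanged.

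Should a self-contained justification be needed, I would fall back on the classical Wiener--Ikehara Tauberian argument. One multiplies by $e^{\lambda_* x}$, and for $k\neq0$ divides by the appropriate power $x^k$ via the Mellin factor $\Gamma(k+1)$. One then tests against Fejér kernels, using the boundary continuity supplied by the analyticity of $h$ up to $\mathrm{Re}~\lambda=\lambda_*$. Finally, monotonicity of $u$ upgrades the resulting averaged convergence to pointwise convergence.
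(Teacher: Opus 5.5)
Your proposal is correct and follows the paper's own route: the paper gives no independent proof and obtains the lemma from \cite[Proposition 2.3]{carr2004uniqueness} by the change of variables $x=-s$. That is exactly your reflection $u(x)=f(-x)$, which turns the increasing $f$ on $(-\infty,0)$ into a decreasing function on $(0,\infty)$ with $\int_{-\infty}^0 e^{-\lambda s}f(s)\,ds=\int_0^\infty e^{\lambda x}u(x)\,dx$, and your transfer of $\lambda_*$, $k$, $h$ and of the normalization $x^k e^{-\lambda_* x}=|s|^k e^{\lambda_* s}$ is handled correctly.
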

The following lemma (see \cite[Chapter 5, Lemma 4.1]{volpert1994traveling}) will also be needed:

\begin{lemma}\label{lmavolpert}
    Let $A$ be an essentially nonnegative matrix and let $z\in\mathbb{C}^n.$ If 
   all eigenvalues of $A$ have negative real parts
    and $\mathrm{Re}~z_k\leq 0$ for each $k\in[n],$ then
    all eigenvalues of $A+\mathrm{diag}(z)$ have negative real parts.
\end{lemma}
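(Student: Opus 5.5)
The plan is to reduce to the M-matrix case by Perron--Frobenius, then perturb the diagonal one entry at a time.

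\emph{Reduction.} Write $A=-sI_n+B$ with $B\geq0$ entrywise and $s>0$ large. The hypothesis says $\Lambda_1(A)<0$, i.e.\ $\rho(B)<s$. For $\widetilde A:=A+\mathrm{diag}(z)$ and any eigenvalue $\mu$ with eigenvector $x\neq0$, we have, for each $k$,
\[
(\mu-a_{kk}-z_k)x_k=\sum_{j\neq k}a_{kj}x_j .
\]
Taking absolute values and using $a_{kj}\geq0$ for $j\neq k$ gives
\[
|\mu-a_{kk}-z_k|\,|x_k|\leq\sum_{j\neq k}a_{kj}|x_j| .
\]

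\emph{Main step.} Suppose, for contradiction, that $\mathrm{Re}\,\mu\geq0$. Since $\mathrm{Re}\,z_k\leq0$,
\[
|\mu-a_{kk}-z_k|\geq\mathrm{Re}(\mu-a_{kk}-z_k)\geq -a_{kk}.
\]
This is only useful when $-a_{kk}>0$; that holds automatically, because a Hurwitz essentially nonnegative matrix has negative diagonal (each $a_{kk}\le\Lambda_1(A)<0$). Writing $y=|x|\geq0$, $y\neq0$, we obtain $-a_{kk}y_k\leq\sum_{j\neq k}a_{kj}y_j$, that is,
\[
Ay\geq0 .
\]

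\emph{Contradiction.} Because $A$ is an essentially nonnegative Hurwitz matrix, $-A$ is a nonsingular M-matrix, so $(-A)^{-1}\geq0$. Then $Ay\geq0$ gives $-y=(-A)^{-1}(-Ay)\cdot(-1)$; more directly, $y=(-A)^{-1}(-Ay)\leq0$. Combined with $y\geq0$ this forces $y=0$, a contradiction. Hence every eigenvalue of $A+\mathrm{diag}(z)$ has negative real part.

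Equivalently, one can avoid inverses: take the left Perron vector $\ell\geq0$, $\ell\neq0$, of $A$ with $\ell^\top A=\Lambda_1(A)\ell^\top$. Then $0\leq\ell^\top Ay=\Lambda_1(A)\,\ell^\top y$, which forces $\ell^\top y=0$. In the irreducible case $\ell>0$, so $y=0$. The reducible case needs either the M-matrix inverse argument or a perturbation $A+\epsilon \mathbf 1\mathbf 1^\top$ with $\epsilon\to0$.

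The only delicate point is this last step in the reducible case, and the M-matrix inverse-positivity argument handles it cleanly.
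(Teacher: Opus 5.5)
Your proof is correct and complete. The paper gives no argument of its own for this lemma: it quotes it from the monograph of Volpert et al.\ (Chapter 5, Lemma 4.1). So your proof is a self-contained substitute rather than a reconstruction of the paper's reasoning.

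Your route runs as follows. You pass to $y=|x|$ and show that $\mathrm{Re}\,\mu\ge 0$ forces $Ay\ge 0$. You then use inverse positivity $(-A)^{-1}=\sum_{k\ge0}B^k/s^{k+1}\ge 0$ of the nonsingular M-matrix $-A=sI_n-B$ to get $y=(-A)^{-1}(-Ay)\le 0$, hence $y=0$. All of this is sound.

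The other standard proof is a weighted Gershgorin argument. Set $p:=(-A)^{-1}1_n$; then $p>0$ and $Ap<0$, so $\sum_{j\neq k}a_{kj}p_j/p_k<-a_{kk}$. Consequently every Gershgorin disc of $\mathrm{diag}(p)^{-1}\bigl(A+\mathrm{diag}(z)\bigr)\mathrm{diag}(p)$ lies in $\{\zeta:\mathrm{Re}\,\zeta<\mathrm{Re}\,z_k\}\subset\{\mathrm{Re}\,\zeta<0\}$. Both proofs rest on the same M-matrix fact, namely inverse positivity, which is equivalent to the existence of such a $p$. Yours avoids constructing $p$. The Gershgorin version gives an explicit localization of the spectrum, and hence a quantitative spectral gap.

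Both arguments actually prove a bit more than stated. They use only $|\widetilde{a}_{kj}|\le a_{kj}$ for $j\neq k$ and $\mathrm{Re}\,\widetilde{a}_{kk}\le a_{kk}$ for the perturbed matrix $\widetilde{A}$.

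Two cosmetic fixes:
\begin{itemize}
\item Delete the garbled identity $-y=(-A)^{-1}(-Ay)\cdot(-1)$ and keep only $y=(-A)^{-1}(-Ay)\le 0$.
\item The remark that $a_{kk}<0$ is unnecessary. Multiplying $|\mu-a_{kk}-z_k|\ge -a_{kk}$ by $y_k\ge 0$ already gives $-a_{kk}y_k\le\sum_{j\neq k}a_{kj}y_j$, whatever the sign of $a_{kk}$.
\end{itemize}
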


We are ready to prove Theorem~\ref{thm2}.
\begin{proof}[Proof of Theorem~\ref{thm2}]
We still denote $c^*$ as in \eqref{thm1c*}.
Before beginning the proof, we do some preparation.
First, in view of Lemma \ref{lmalambda1} (b) and Remark~\ref{rk:analytic}, $\Lambda_1(H_{\lambda,c}+D_vg(E_0))=0$ has a smallest positive root $\lambda=\lambda_c$ with multiplicity $p_c+1$ whenever $c\geq c^*$, where $p_c=0$ if $c>c^*$; $p_c=1$ if $c=c^*$.
Also, by Lemma \ref{lmalambda1} (a) (taking $D=\begin{bmatrix}
    d_0
\end{bmatrix},P=\begin{bmatrix}
    -\delta
\end{bmatrix}$), $\Delta_c(\lambda):=c\lambda-d_0m(\lambda)+\delta=0$ has a unique positive real simple root $\nu_c>0$. Furthermore, Lemma \ref{lmalambda1} (c) asserts that $\Delta_c(\lambda)>0$ for all $\lambda\in(0,\nu_c)$, and thus $\Delta_c'(\nu_c)<0.$

    Since $g_i(U,0_n)=0$, Taylor's theorem gives
     \begin{align*}
         g_i(U,V)(s)=\sum_{j=1}^n\frac{\partial g_i}{\partial v_j}(U(s),0_n)V_j(s)+\frac{1}{2}\sum_{j,k=1}^n\frac{\partial^2 g_i}{\partial v_k\partial v_j}(P_{0,i}(s))V_j(s)V_k(s),
     \end{align*}
     where $P_{0,i}(s)=(U(s),\xi_{0,i}(s)V(s))$ for some $\xi_{0,i}(s)\in[0,1].$
     Furthermore, for each $j\in[n],$
     \begin{align*}
         \frac{\partial g_i}{\partial v_j}(U(s),0_n)=\frac{\partial g_i}{\partial v_j}(E_0)+\frac{\partial^2 g_i}{\partial u\partial v_j}(P_{ij}(s))(U(s)-K),
     \end{align*}
     where $P_{ij}(s)=(K+\xi_{ij}(s)(U(s)-K),0_n)$ for some $\xi_{ij}(s)\in[0,1].$
     Now, let
     \begin{align*}
         G_i(s)=\sum_{j=1}^n\frac{\partial^2 g_i}{\partial u\partial v_j}(P_{ij}(s))(U(s)-K)V_j(s)+\frac{1}{2}\sum_{j,k=1}^n\frac{\partial^2 g_i}{\partial v_k\partial v_j}(P_{0,i}(s))V_j(s)V_k(s)
     \end{align*}
     and $G(s):=(G_1(s),\dots,G_n(s)).$ Then
     \begin{align}\label{expandgatuv}
         g_i(U,V)(s)=\sum_{j=1}^n\frac{\partial g_i}{\partial v_j}(E_0)V_j(s)+ G_i(s)=\sum_{j=1}^n\Big(K(g_{ij}^0)'(0)+m_{ij}\Big)V_j(s)+ G_i(s).
     \end{align}
Since $U,V\in L^{\infty}(\mathbb{R})$ and $g_{ij}^0\in \mathcal{C}^2(\mathbb{R})$, there exists $C_1>0$ such that
\begin{align}\label{G-est}
|G(s)|_1\leq C_1\left(|K-U(s)|\cdot|V(s)|_1+|V(s)|_1^2\right),
\quad s\in\mathbb R.
\end{align}
By Lemma \ref{lem:for-Laplace} with small $\varepsilon\in(0,\min\{\frac{\lambda_c}{2},\mu_0\})$ and \eqref{G-est},
we obtain
\begin{align}\label{G-bound}
|G(s)|_1=O(e^{\widehat\mu_1s})
\quad\text{as }s\to-\infty,
\end{align}
where
\[
\widehat\mu_1=
\min\left\{
2(\lambda_c-\varepsilon),
\lambda_c-\varepsilon+\mu_0
\right\}
>\lambda_c.
\]

Thanks to Lemma~\ref{lem:for-Laplace}, we can define the bilateral Laplace transform of $V$
$$\mathcal{L}(V)(\lambda):=\int_{\mathbb{R}}e^{-\lambda s}V(s)ds,\quad 0<\mathrm{Re}~\lambda<\lambda_c.$$
     Also, denote
     \begin{align*}
         \mathcal{L}_+(V)(\lambda):=\int_0^\infty e^{-\lambda s}V(s)ds,\quad \mathcal{L}_-(V)(\lambda):=\int_{-\infty}^0e^{-\lambda s}V(s)ds,\quad \mathcal{L}_G(\lambda):=-\int_{\mathbb{R}}e^{-\lambda s}G(s)ds.
     \end{align*}
Then $\mathcal{L}_+(V)$ is analytic in $\mathrm{Re}~\lambda>0$, and 
$\mathcal{L}_G(\lambda)$ is analytic in $0<\mathrm{Re}~\lambda<\widehat{\mu}_1$, by \eqref{G-bound}.
     
     Applying the bilateral Laplace transform on both sides of the equation satisfied by $V,$ we obtain
     \begin{align}\label{laplacetran}
         Q_c(\lambda)\Big(\mathcal{L}_+(V)(\lambda)+\mathcal{L}_-(V)(\lambda)\Big)=\mathcal{L}_G(\lambda)
     \end{align}
     once they are defined, where 
      \[
     Q_c(\lambda):=H_{\lambda,c}+P,\quad P:=D_vg(E_0).
     \]
     Define $\mathbb{D}(\lambda):=\det(Q_c(\lambda))$.
      Then for $0<\mathrm{Re}~\lambda<\lambda_c$, if $Q_c(\lambda)^{-1}$ exists, then \eqref{laplacetran} gives
      \begin{align}\label{separate}
           \mathcal{L}_-(V)(\lambda)=Q_c(\lambda)^{-1}\mathcal{L}_G(\lambda)-\mathcal{L}_+(V)(\lambda).
      \end{align}

     
     The remainder of this proof is cut into three steps.
     
{\bf Step 1}. {\it The function $h:\{\lambda\in\mathbb{C}:0<\mathrm{Re}~\lambda<\lambda_c\}\to\mathbb{C}^n$ by
     \begin{align*}
         h(\lambda):=(\lambda_c-\lambda)^{p_c+1}\mathcal{L}_-(V)(\lambda).
     \end{align*}
     can be analytically extended to $\{\lambda\in\mathbb{C}:0<\mathrm{Re}~\lambda\leq\lambda_c\}.$\\
     }
     Note that the function $\mathbb{D}(\lambda)=\det(Q_c(\lambda))$
     is an analytic nonconstant function in $0<\mathrm{Re}~\lambda<\infty$. Then the zero locus $Z:=\{\lambda\in\mathbb{C}:0<\mathrm{Re}~\lambda<\infty\mbox{ and }\mathbb{D}(\lambda)=0\}$ is discrete. 
    Moreover, by the preparation above, $\lambda_c\in Z$ is a zero of
$\mathbb{D}$ of exact order $p_c+1$.
     Then for all $\lambda\in\{\lambda\in\mathbb{C}:0<\mathrm{Re}~\lambda<\lambda_c\}\setminus Z$, by \eqref{separate},
     \begin{align*}
         h(\lambda)&=(\lambda_c-\lambda)^{p_c+1}\left[(Q_c(\lambda))^{-1}\mathcal{L}_G(\lambda)-\mathcal{L}_+(V)(\lambda)\right]\\&=\frac{(\lambda_c-\lambda)^{p_c+1}}{\mathbb{D}(\lambda)}\mathrm{adj}(Q_c(\lambda))\mathcal{L}_G(\lambda)-(\lambda_c-\lambda)^{p_c+1}\mathcal{L}_+(V)(\lambda),
     \end{align*}
     where $\mathrm{adj}(Q_c(\lambda))$ denotes the adjugate matrix of $Q_c(\lambda).$
     Thus, this $h$ can be analytically extended to $\{\lambda\in\mathbb{C}:0<\mathrm{Re}~\lambda\leq\lambda_c\}$ if
     the function
     $$g(\lambda):=\frac{(\lambda_c-\lambda)^{p_c+1}}{\mathbb{D}(\lambda)}$$ has no pole in $\{\lambda\in\mathbb{C}:\mathrm{Re}~\lambda=\lambda_c\}.$ 
     To this end, fix a $\theta\in\mathbb{R}\setminus\{0\}.$ Then
     \begin{align*}
         Q_c(\lambda_c+\hat{\imath}\theta)
         &=Q_c(\lambda_c)
         +\bigg(\int_{\mathbb{R}}J(s)e^{-\lambda_cs}(e^{-\hat{\imath}\theta s}-1)ds\bigg)D-\hat{\imath}\theta cI\\
         &=Q_c(\lambda_c)
         +\widetilde{\theta}_{\mathrm{Re}}D+\hat{\imath}(\widetilde{\theta}_{\mathrm{Im}}D-\theta cI),
     \end{align*}
     where
     $$\widetilde{\theta}_{\mathrm{Re}}=\int_{\mathbb{R}}J(s)e^{-\lambda_cs}(\cos(\theta s)-1)ds<0,\quad\widetilde{\theta}_{\mathrm{Im}}=-\int_{\mathbb{R}}J(s)e^{-\lambda_cs}\sin(\theta s)ds.$$
     Rewrite
     \begin{align*}
         Q_c(\lambda_c+\hat{\imath}\theta)
         =\left(Q_c(\lambda_c)
         +\dfrac{1}{2}\widetilde{\theta}_{\mathrm{Re}}D\right)+\left(\dfrac{1}{2}\widetilde{\theta}_{\mathrm{Re}}D+\hat{\imath}(\widetilde{\theta}_{\mathrm{Im}}D-\theta cI)\right).
     \end{align*}
     Also note that $$\Lambda_1\left(Q_c(\lambda_c)+\dfrac{1}{2}\widetilde{\theta}_{\mathrm{Re}}D\right)\leq\Lambda_1(Q_c(\lambda_c))+\dfrac{1}{2}\widetilde{\theta}_{\mathrm{Re}}d_{\min}=\dfrac{1}{2}\widetilde{\theta}_{\mathrm{Re}}d_{\min}<0.$$ 
     By Lemma \ref{lmavolpert}, all eigenvalues of $Q_c(\lambda_c+\hat{\imath}\theta)$ have negative real parts. In particular, 
     $\mathbb{D}(\lambda_c+\hat{\imath}\theta)\neq 0$ for all $\theta\neq0$.
     Hence, $g$ has no pole in $\{\lambda\in\mathbb{C}:\mathrm{Re}~\lambda=\lambda_c,~\mathrm{Im}~\lambda\neq 0\}.$ Moreover, 
     since $\lambda_c$ is a zero of $\mathbb{D}$ of exact order $p_c+1$,
     we see that $\lambda_c$ is a removable singularity of $g$, which completes the proof of that $g$ has no pole in $\{\lambda\in\mathbb{C}:\mathrm{Re}~\lambda=\lambda_c\}$. Consequently, $h:\{\lambda\in\mathbb{C}:0<\mathrm{Re}~\lambda\leq\lambda_c\}\to\mathbb{C}^n$ is an analytic function such that
     \begin{align}\label{L_V1}
     \mathcal{L}_-(V)(\lambda)=\dfrac{h(\lambda)}{(\lambda_c-\lambda)^{p_c+1}}\mbox{ for }0<\mathrm{Re}~\lambda<\lambda_c.
     \end{align}
This completes Step 1.

{\bf Step 2}. {\it $h(\lambda_c)>0_n$ and thus the conclusion of part (a) is asserted.} 
    

Since $\Lambda_1(Q_c(\lambda_c))=0$
is an algebraically simple and isolated eigenvalue of $Q_c(\lambda_c)$, 
    we can choose $\Gamma$ to be a circle centered at $\Lambda_1(Q_c(\lambda_c))=0$, lying entirely within the resolvent set of $Q_c(\lambda_c)$ and enclosing no other eigenvalues of $Q_c(\lambda_c).$
    We consider the Riesz projector (see Kato \cite[Chapter 1]{kato2013perturbation})
    \begin{align*}
    \mathcal{P}(\lambda):=\frac{1}{2\pi \hat{\imath}}\oint_{\Gamma}(\zeta I_n-Q_c(\lambda))^{-1}\,d\zeta\mbox{ for }\lambda\mbox{ near }\lambda_c.
     \end{align*} 
   Since $0$ is an algebraically simple eigenvalue of $Q_c(\lambda_c)$, we have ${\rm rank}(\mathcal P(\lambda_c))=1$. By perturbation theory, ${\rm rank}(\mathcal P(\lambda))=1$ for all $\lambda$ near $\lambda_c$.
   Furthermore, since $\mathcal P(\lambda)$ commutes with $Q_c(\lambda)$ and has rank $1$, the range ${\rm R}(\mathcal P(\lambda))$ is a one-dimensional $Q_c(\lambda)$-invariant subspace.
   Therefore, there exists a unique $\kappa_c(\lambda)\in \mathbb{C}$ such that 
   \[
   Q_c(\lambda)\mathcal P(\lambda) =\kappa_c(\lambda)\mathcal P(\lambda).
\]
   Since ${\rm tr}(\mathcal P(\lambda))=1$, we have
\[
\kappa_c(\lambda)
=
{\rm tr}\big(Q_c(\lambda)\mathcal P(\lambda)
\big),
\]
and hence $\kappa_c(\cdot)$ is analytic for $\lambda$ near $\lambda_c$.

For $\lambda\in\mathbb{R}$ sufficiently close to $\lambda_c$,
$Q_c(\lambda)$ is an irreducible essentially nonnegative matrix, and thus we have
\[
\kappa_c(\lambda)=\Lambda_1(Q_c(\lambda))
=\mu(\lambda)-c\lambda,
\]
where
\begin{align}\label{defofmu}
    \mu(\lambda)=\Lambda_1(m(\lambda)D+P).
\end{align}

Let $\psi_c$ and $\varphi_c^\top$ be the right and left positive eigenvectors of $Q_c(\lambda_c)$ corresponding to
the eigenvalue $0$, respectively. We normalize $\varphi_c^\top\psi_c=1$.

Set
$\widetilde\psi(\lambda):=
\mathcal P(\lambda)\psi_c$
and  $\widetilde\varphi(\lambda)^\top
:=\varphi_c^\top\mathcal P(\lambda)$.
Since
$\widetilde\psi(\lambda_c)=\psi_c$
and 
$\widetilde\varphi(\lambda_c)^\top=\varphi_c^\top$,
we have 
\[
q(\lambda)
:=
\widetilde\varphi(\lambda)^\top
\widetilde\psi(\lambda)\neq0
\quad \mbox{ for $\lambda$ near $\lambda_c$.}
\]
Define
\[
\psi(\lambda):=\widetilde\psi(\lambda), \quad
\varphi(\lambda)^\top:=\frac{\widetilde\varphi(\lambda)^\top}{q(\lambda)}.
\]
Then we obtain
$Q_c(\lambda)\psi(\lambda)=
\kappa_c(\lambda)\psi(\lambda)$, 
$\varphi(\lambda)^\top Q_c(\lambda)=
\kappa_c(\lambda)\varphi(\lambda)^\top$
and
$\varphi(\lambda)^\top\psi(\lambda)=1$.
Since $\mathcal P(\lambda)$ has rank one, it follows that
\begin{align}\label{P-form}
\mathcal P(\lambda)
=
\psi(\lambda)\varphi(\lambda)^\top\quad \mbox{for all $\lambda$ near $\lambda_c$.}   
\end{align}
Furthermore, both $\psi(\cdot)$ and $\varphi(\cdot)$ are analytic near $\lambda_c.$

    By spectral decomposition, we 
    have
    \begin{align*}
    Q_c(\lambda)^{-1}=\frac{1}{\kappa_c (\lambda)}\mathcal{P}(\lambda)+M(\lambda),
    \end{align*}
    where $M$ is analytic
    for all $\lambda$ near $\lambda_c$.  Then from \eqref{separate} and \eqref{P-form},  we have 
\begin{align}\label{L_V2}
\mathcal{L}_-(V)(\lambda)=\frac{1}{\kappa_c(\lambda)} \psi(\lambda)\big(\varphi(\lambda)^\top \mathcal{L}_G(\lambda)\big)+\Tilde{M}(\lambda)
\end{align}
where $\Tilde{M}(\lambda)=M(\lambda)\mathcal{L}_G(\lambda)-\mathcal{L}_+(V)(\lambda)$ is analytic  
for all $\lambda$ near $\lambda_c$. Combining \eqref{L_V1} and \eqref{L_V2}, we have
\begin{align}\label{h-form}
    h(\lambda)=\frac{(\lambda_c-\lambda)^{p_c+1}}{\kappa_c(\lambda)} \psi(\lambda)\big(\varphi(\lambda)^\top \mathcal{L}_G(\lambda)\big)+(\lambda_c-\lambda)^{p_c+1}\Tilde{M}(\lambda)\mbox{ for all }\lambda\in(0,\lambda_c).
\end{align}
Note that since $\Tilde{M}(\cdot)$ is analytic,
$(\lambda_c-\lambda)^{p_c+1}\Tilde{M}(\lambda)\to 0_n$ as $\lambda\to\lambda_c$. We will focus on the first term in \eqref{h-form}.

Let us claim that 
\begin{align}\label{LG>0}
\mathcal L_G(\lambda_c)
\geq 0_n,\quad  \mathcal L_G(\lambda_c)\neq 0_n.   
\end{align}
To see this, from the definition of $G_i$ and assumption (C4), we see that $G_{i}(s)\leq 0$ for all $i$ and $s\in\mathbb{R}$. This implies $ \mathcal L_G(\lambda_c)\geq 0_n$. 
In addition, by (G2), we have 
$(g_{i_0j_0}^0)'(0)>0$ for some $i_0,j_0\in[n]$. 
Next, we show that $U\not\equiv K$. Indeed, if $U\equiv K$, by the equation of $U$, we obtain $g_0(K,V)\equiv 0$,
which is impossible because $V>0_n$ and
$g_{i_0j_0}^0(V_{j_0})>0$.
Therefore, there exists $s_0\in\mathbb{R}$ such that $U(s_0)<K$ and thus
\[
G_{i_0}(s_0)\leq (U(s_0)-K)(g_{i_0j_0}^0)'(0) V_{j_0}(s_0)<0,
\]
which yields that \eqref{LG>0}.
Furthermore,
since the left Perron eigenvector $\varphi_c^\top:=\varphi(\lambda_c)^\top$ 
is positive, together with \eqref{LG>0}, we conclude that 
\begin{align}\label{phiLG>0}
\varphi_c^\top\mathcal L_G(\lambda_c)=
-\int_{\mathbb R}e^{-\lambda_cs}\varphi_c^\top G(s)ds>0.
\end{align}
Moreover, $\psi(\lambda)$ satisfies 
\[
(m(\lambda)D+P)\psi(\lambda)=\mu(\lambda) \psi(\lambda),
\]
where $\mu(\cdot)$ is defined as in \eqref{defofmu}.
By differentiating in $\lambda\in\mathbb{R}$, one has 
\begin{align}\label{diff-1}
m'(\lambda)D\psi(\lambda)+(m(\lambda)D+P)\psi'(\lambda)=\mu'(\lambda) \psi(\lambda)+\mu(\lambda) \psi'(\lambda).
\end{align}
Next, multiplying both sides of \eqref{diff-1} by $\varphi(\lambda)^\top$ on the left and using $\varphi(\lambda)^\top (m(\lambda)D+P)=\mu(\lambda)\varphi(\lambda)^\top $ and $\varphi(\lambda)^\top \psi(\lambda) =1$, 
\begin{align*}
m'(\lambda)\varphi(\lambda)^\top D\psi(\lambda)+\mu(\lambda)\varphi(\lambda)^\top\psi'(\lambda)=\mu'(\lambda)+\mu(\lambda) \varphi(\lambda)^\top \psi'(\lambda),
\end{align*}
which implies that
\begin{align*}
\mu'(\lambda)=m'(\lambda)\varphi(\lambda)^\top D\psi(\lambda).
\end{align*}
Using $J(s)=J(-s),$ we have
\begin{align*}
    m'(\lambda)=&-\int_{\mathbb{R}}J(s)se^{-\lambda s}ds
    =-\left(\int_{-\infty}^0J(s)se^{-\lambda s}ds+\int_{0}^\infty J(s)se^{-\lambda s}ds\right)\\
    =&\int_0^\infty sJ(s)(e^{\lambda s}-e^{-\lambda s})ds>0\mbox{ for all }\lambda>0.
\end{align*}
Together with $\varphi(\lambda)^\top D \psi(\lambda)>0$, we have 
$$\mu'(\lambda)>0.$$

Recall $\psi_c=\psi(\lambda_c)$, $\varphi_c^\top=\varphi(\lambda_c)^\top,$ and $\kappa_c(\lambda)=\mu(\lambda)-c\lambda.$
We now divide our discussion into two cases:
\begin{itemize}
    \item[(i)] $c>c^*$ ($p_c=0$). In this case, $\kappa_c(\lambda_c)=0$ and $\kappa'(\lambda_c)=\mu'(\lambda_c)-c<0$,
    where
    the strict inequality follows from the fact that $\lambda_c$ is the smaller positive root of the strictly convex function $\lambda\mapsto\mu(\lambda)-c\lambda$ (Lemma~\ref{lmalambda1}). 
    Hence,  
    \begin{align*}
    \frac{\lambda_c-\lambda}{\kappa_c(\lambda)}=\frac{\lambda_c-\lambda}{\kappa'(\lambda_c)(\lambda-\lambda_c)+o(|\lambda-\lambda_c|)}\to \frac{1}{c-\mu'(\lambda_c)}>0\quad \text{ as } \lambda\to\lambda_c.
    \end{align*}
    Moreover, note that 
    \begin{align*}
    \psi(\lambda)\to \psi_c,\quad \varphi(\lambda)^\top\mathcal{L}_G(\lambda) \to \varphi_c^\top\mathcal{L}_G(\lambda_c) \quad \text{ as } \lambda\to\lambda_c.
    \end{align*}
    From \eqref{h-form} we obtain that $h(\lambda_c)=A_c \psi_c$, where 
    \begin{align*}
     A_c=\frac{\varphi_c^\top\mathcal{L}_G(\lambda_c)}{c-\mu'(\lambda_c)}>0
    \end{align*}
    and we have used \eqref{LG>0}
    and $c-\mu'(\lambda_c)>0$. 

      \item[(ii)] $c=c^*$ ($p_c=1$). In this case, $\kappa_c(\lambda_c)=\kappa'(\lambda_c)=0< \kappa''(\lambda_c)=\mu''(\lambda_c)$. Hence, 
    \begin{align*}
    \frac{(\lambda_c-\lambda)^2}{\kappa_c(\lambda)}=\frac{(\lambda_c-\lambda)^2}{(\kappa''(\lambda_c)/2)(\lambda_c-\lambda)^2+o(|\lambda-\lambda_c|^2)}\to \frac{2}{\mu''(\lambda_c)}>0 \quad \text{ as } \lambda\to\lambda_c.
    \end{align*}
    This implies that $h(\lambda_c)=A_c \psi_c$, where 
    \begin{align*}
     A_c=\frac{2\varphi_c^\top\mathcal{L}_G(\lambda_c)}{\mu''(\lambda_c)}>0
    \end{align*}
   because of \eqref{LG>0} and the fact that
   $\mu''(\lambda_c)>0$ (Lemma~\ref{lmalambda1}). 
\end{itemize}
Combining (i) and (ii), we obtain from Ikehara's theorem that there is an $A_c>0$ such that
    \begin{align*}
     \lim\limits_{s\to-\infty}\dfrac{V(s)}{|s|^{p_c}e^{\lambda_cs}}=\frac{A_c\psi_c}{\Gamma(p_c+1)}>0_n
    \end{align*}
    when each $V_i$ is strictly increasing on $(-\infty,0)$.
    
    If some $V_i$ is not strictly increasing on $(-\infty,0)$, 
    then we may consider a translation $V(s)e^{\rho s},$ where $\rho>
\max_{i\in[n]}
\frac{d_i+|m_{ii}|}{c}$ is chosen such that $V_i'+\rho V_i>0$ on $\mathbb{R}$ for each $i\in[n]$. Then $V_i(s)e^{\rho s}$ is strictly increasing on $\mathbb{R}$. 
Next, we can apply the above argument to $V(s)e^{\rho s}$, instead of $V(s)$. More precisely, since 
    \[\mathcal L_-(V(s)e^{\rho s})(\lambda)=\mathcal L_-(V)(\lambda-\rho),\] 
    we see that $\mathcal L_-(V(s)e^{\rho s})$ is well-defined in $
0<{\rm Re}\lambda<\lambda_c+\rho$. Following the above process, one can derive
\[
\lim_{s\to-\infty}
\frac{V(s)e^{\rho s}}
{|s|^{p_c}e^{(\lambda_c+\rho)s}}
=
\frac{\tilde{A_c}\psi_c}{\Gamma(p_c+1)}>0_n
\]
for some $\tilde{A_c}>0$. This concludes the desired result for $V(s)$.

{\bf Step 3}. {\it Conclude the asymptotic behavior of $U$ in part (b).}

     Recall that $\int_{\mathbb{R}}e^{-\lambda s}V(s)ds$
is defined for $0<\mathrm{Re}~\lambda<\lambda_c.$ Let $$W(s)=K-U(s)\mbox{ for }s\in\mathbb{R}.$$  Likewise as before, we can assume that $W$ is strictly increasing on $(-\infty,0)$ 
in order to apply Ikehara's theorem.
Otherwise, we may consider 
$e^{\rho s} W(s)$ for some large $\rho$ so that $e^{\rho s} W(s)$ is strictly increasing.

By the equation satisfied by $U,$ we have
\begin{align*}
        cW'(s)=d_0\int_{\mathbb{R}}J(s')W(s-s')ds'-d_0W(s)-\delta W(s)+(K-W(s))\sum_{i,j=1}^n g_{ij}^0(V_j(s))\mbox{ for }s\in\mathbb{R}.
    \end{align*}

    Furthermore, denote
    \begin{align*}
        \mathcal{L}_+(W)(\lambda):=\int_0^\infty e^{-\lambda s}W(s)ds,\quad\mathcal{L}_-(W)(\lambda):=\int_{-\infty}^0 e^{-\lambda s}W(s)ds.
    \end{align*}
Clearly, $\mathcal{L}_+(W)$ is analytic in $\mathrm{Re}~\lambda>0$.
By Lemma~\ref{lem:for-Laplace}, $\mathcal{L}_-(W)$ is analytic in $0<\mathrm{Re}~\lambda<\alpha_c,$ where $\alpha_c=\min\{\nu_c,\lambda_c\}$. 
    
    Using the bilateral Laplace transform and the relation \eqref{expandgatuv}, we have
\begin{align}\label{LaplaceU}
         \Delta_c(\lambda)\left(\mathcal{L}_+(W)(\lambda)+\mathcal{L}_-(W)(\lambda)\right)
         =I_1(\lambda)+I_2(\lambda)
     \end{align}
     as long as both $\mathcal{L}_-(W)(\lambda)$ and $\mathcal{L}_-(V)(\lambda)$ are defined,
     where
     \begin{align*}
         I_1(\lambda):=\sum_{i,j=1}^nK(g_{ij}^0)'(0)\int_{\mathbb{R}}e^{-\lambda s}V_j(s)ds,\quad I_2(\lambda):=\sum_{i=1}^n\int_{\mathbb{R}}e^{-\lambda s}G_i(s)ds.
     \end{align*} 
     
     Also, recall from 
    Steps 1 and 2 that there exists a function 
     $h_V$  which has an analytic continuation across
$\mathrm{Re}~\lambda=\lambda_c$,
    with $h_V(\lambda_c)=A_c\psi_c>0_n$ such that
\[
\mathcal L_-(V)(\lambda)= \frac{h_V(\lambda)}
     {(\lambda_c-\lambda)^{p_c+1}},
\quad
0<\mathrm{Re}~\lambda<\lambda_c,
\]
It follows that
$$I_1(\lambda)=\frac{h_1(\lambda)}{(\lambda_c-\lambda)^{p_c+1}}\mbox{ for }0<\mathrm{Re}~\lambda<\lambda_c$$
for some function $h_1$ which has an analytic continuation across
$\mathrm{Re}~\lambda=\lambda_c$, with
$h_1(\lambda_c)>0$.
      Furthermore, as we have done in the beginning of the proof, we can verify that $I_2(\lambda)$ is defined and analytic in $0<\mathrm{Re}~\lambda<\min\{2\lambda_c,\lambda_c+\mu_0\}.$ Then by \eqref{LaplaceU},
      \begin{align}\label{Ilambdapole}
          \Delta_c(\lambda)\mathcal{L}_-(W)(\lambda)=\frac{\widehat{h}(\lambda)}{(\lambda_c-\lambda)^{p_c+1}}\mbox{ for }0<\mathrm{Re}~\lambda<\lambda_c,
      \end{align}
      where $\widehat{h}(\lambda):=h_1(\lambda)+(\lambda_c-\lambda)^{p_c+1}(I_2(\lambda)-\Delta_c(\lambda)\mathcal{L}_+(W)(\lambda))$ is an analytic function defined on $0<\mathrm{Re}~\lambda\leq\lambda_c$  with $\widehat{h}(\lambda_c)= h_1(\lambda_c)>0$. 

     Since $\Delta_c(\lambda)=0$ is a scalar equation, one can follow the argument in \cite[p.2437]{carr2004uniqueness} to conclude that  $\Delta_c$ has no complex zero in
 $\{0<\mathrm{Re}~\lambda<\nu_c\}$ 
and that $\nu_c$ is its only zero on
$\mathrm{Re}~\lambda=\nu_c$.
    
 We now divide our discussion into three cases.
     \begin{enumerate}[(i)]
         \item $\nu_c>\lambda_c.$ In this case, 
         $\Delta_c(\lambda)\neq 0$ for $0<\mathrm{Re}~\lambda\leq\lambda_c$, 
          by \eqref{LaplaceU} and \eqref{Ilambdapole}, we have
     \[
       \mathcal{L}_-(W)(\lambda)=\frac{\widehat{h}(\lambda)/\Delta_c(\lambda)}{(\lambda_c-\lambda)^{p_c+1}}\mbox{ for }0<\mathrm{Re}~\lambda<\lambda_c,    
     \]
     where $\frac{\widehat{h}(\lambda)}{\Delta_c(\lambda)}$ is analytic in $0<\mathrm{Re}~\lambda\leq\lambda_c$ and is positive at $\lambda=\lambda_c$.
 Thus, we can apply Ikehara's Theorem to get
         \begin{align*}
             \lim_{s\to-\infty}\frac{W(s)}{|s|^{p_c}e^{\lambda_c s}}=B_c\quad \mbox{for some $B_c>0$.}
         \end{align*}
         \item $\nu_c=\lambda_c$. 
         Since $\lambda_c$ is a simple root of $\Delta_c(\lambda)=0$, we can express $\Delta_c(\lambda)$ as 
         \begin{align*}
            \Delta_c(\lambda)=(\lambda-\lambda_c)\Delta_c'(\lambda_c)+o(|\lambda-\lambda_c|)\quad\mbox{as }\lambda\to \lambda_c,\quad \Delta_c'(\lambda_c)< 0. 
         \end{align*}
         Hence, there exists an 
         analytic function $\widetilde{h}:\{\lambda\in\mathbb{C}:0<\mathrm{Re}~\lambda\leq\lambda_c\}\to\mathbb{C}$ with $\widetilde{h}(\lambda_c)>0$   such that
         $$\mathcal{L}_-(W)(\lambda)=\frac{\widetilde{h}(\lambda)}{(\lambda_c-\lambda)^{(p_c+1)+1}}\mbox{ for }0<\mathrm{Re}~\lambda<\lambda_c.$$ Then  Ikehara's theorem asserts that
         \begin{align*}
             \lim_{s\to-\infty}\frac{W(s)}{|s|^{p_c+1}e^{\lambda_c s}}=B_c\quad \mbox{for some $B_c>0$.}
         \end{align*}
         \item $\nu_c<\lambda_c.$
        Note that $\Delta_c(\lambda)\neq 0$ for $0<\mathrm{Re}~\lambda<\nu_c$ and $\Delta_c(\nu_c)= 0$. Since $\lambda=\nu_c$ is a simple root (and is the unique root in $\mathrm{Re}~\lambda=\nu_c$) with $\Delta_c'(\nu_c)< 0$, we have 
         \begin{align*}
             \mathcal{L}_-(W)(\lambda)=\frac{h_0(\lambda)}{\nu_c-\lambda}\mbox{ for }0<\mathrm{Re}~\lambda<\nu_c,
         \end{align*}
         where 
         \[
h_0(\lambda):=\frac{I(\lambda)(\nu_c-\lambda)}{\Delta_c(\lambda)}
-(\nu_c-\lambda)\mathcal L_+(W)(\lambda),\quad I(\lambda):=I_1(\lambda)+I_2(\lambda).
\]
Note that
$$\Delta_c(\lambda)=(\lambda-\nu_c)\Delta_c'(\nu_c)+o(|\lambda-\nu_c|)\quad\mbox{ as }\lambda\to\nu_c.$$
Then      
 $h_0(\lambda)$ is analytic in the strip $0<\mathrm{Re}~\lambda\leq\nu_c$ with $h_0(\nu_c)=-I(\nu_c)/\Delta_c'(\nu_c)>0.$
         By  Ikehara's theorem,
         \begin{align*}
             \lim_{s\to-\infty}\frac{W(s)}{e^{\nu_c s}}=B_c \quad \mbox{for some $B_c>0$.}
         \end{align*}
     \end{enumerate}
     From the above discussion,
    the proof of Theorem~\ref{thm2} is now complete.
\end{proof}

\appendix

\section{Appendix}
\setcounter{equation}{0}

\subsection{Proof of Lemma \ref{lmapositive}}\label{sec:proofoflmapositive}
\begin{proof}[Proof of Lemma \ref{lmapositive}]
For part (a), set $M:=\sup\limits_{s\in\mathbb{R}}U(s)$.
Since $U\in\mathcal{C}(\mathbb{R})\cap L^\infty(\mathbb{R})$, one can apply the fluctuation lemma (see, e.g., \cite[p.154]{smith2011introduction}) to conclude that there exists a sequence $\{s_k\}$ such that 
\[
U(s_k)\to M,\quad  U'(s_k)\to 0 \quad \text{ as }  k\to\infty. 
\]

Since $U\leq M$, $\int_{\R}J(s)ds=1$, and $g_0(U,V)\geq 0$, the equation of $U$ in \eqref{epidemictws} gives
\begin{align*}
       cU'(s_k)&=d_0\int_\mathbb{R} J(s_k-s')[U(s')-U(s_k)]ds'+\delta(K-U(s_k))-g_0(U(s_k),V(s_k))\\&\leq d_0(M-U(s_k))+\delta(K-U(s_k)).
    \end{align*}
Letting $k\to\infty$, we obtain $0\leq \delta(K-M)$ because $cU'(s_k)\to 0$. This implies $U(s)\leq M\leq K$ for all $s\in\mathbb{R}$ and the proof of part (a) is complete.




    
To prove part (b), we first deal with $U.$
    Let $$P=\delta+\sup_{s\in\mathbb{R}}\left(\sum_{i,j=1}^ng_{ij}^0(V_j(s))\right).$$ Then 
    $$cU'(s)\geq d_0\int_{\mathbb{R}}J(s-s')U(s')ds'-d_0U(s)-PU(s),\quad s\in\mathbb{R}.$$
    By Lemma \ref{SMPlinearized}, $U$ is either positive or identically zero on $\mathbb{R}.$ If $U\equiv 0$ on $\mathbb{R},$ then we have $\delta K=0,$ a contradiction. Thus, $U$ is positive on $\mathbb{R}.$

We next deal with $V$. 
    Note that for each $i\in[n],$ $V_i$ satisfies 
    \begin{align*}
        cV_i'(s)=  d_i\int_{\mathbb{R}}J(s-s')V_i(s')ds'-d_iV_i(s)+\sum_{j=1}^nQ_{ij}(s)V_j(s)\mbox{ for all }s\in\mathbb{R},
    \end{align*}
where $Q_{ij}(s)=U(s)G_{ij}(s)+m_{ij}$ and
\[
G_{ij}(s)=
\begin{cases}
\frac{g_{ij}^0(V_j(s))}{V_j(s)},\quad &  V_j(s)>0,\\
(g_{ij}^0)'(0),\quad &V_j(s)=0.
\end{cases}
\]
Clearly, $Q_{ij}\in\mathcal{C}(\mathbb{R})\cap L^\infty(\mathbb{R})$ for each $i,j\in[n].$
Recall from assumption (C3) that $g_{ij}^0(V_j(s))>0$ whenever $V_j(s)>0$ and $(g_{ij}^0)'(0)>0$. Then the matrix $\begin{bmatrix}
      Q_{ij}(s)
  \end{bmatrix}$ is irreducible for each $s\in\mathbb{R}$ since  $D_vg(E_0)=\begin{bmatrix}
           K(g_{ij}^0)'(0)+m_{ij}
       \end{bmatrix}$ is irreducible.
     By Lemma \ref{SMPlinearized}, we see that $V$ is either positive on $\mathbb{R}$ or identically zero on $\mathbb{R}$.
     

  

Now, suppose that $V\equiv 0_n$ on $\mathbb{R}.$ 
Set $m:=\inf\limits_{s\in\mathbb{R}}U(s)$. 
By the fluctuation lemma, there exists a sequence $\{t_k\}$ such that $U(t_k)\to m$ and $U'(t_k)\to 0$ as $k\to\infty$. 
Using $V\equiv 0_n$ and $g_0(U,0_n)\equiv 0$, we obtain
\begin{align*}
        cU'(t_k)&=d_0\int_\mathbb{R} J(t_k-s')[U(s')-U(t_k)]ds'+\delta(K-U(t_k))\\&\geq d_0(m-U(t_k))+\delta(K-U(t_k)).
    \end{align*}
Letting $k\to\infty$, we have $0\geq \delta(K-m)$. Together with $U\leq K$ from part (a), we obtain $U\equiv K$. This completes the proof of part (b).
\end{proof}

\subsection{Proof of Proposition \ref{lmacpositive}}\label{sec:proofoflmacpositive}
\begin{proof}[Proof of Proposition \ref{lmacpositive}]
    Denote 
    $\Lambda=\Lambda_1(D_vg(E_0))>0$ and $\kappa^\top$ as the left positive Perron eigenvector of $D_vg(E_0)$.
    Since $(U,V)(-\infty)=E_0,$ there is an $s^*<-1$ such that $\kappa^\top g(U,V)(s)>\dfrac{1}{2}\Lambda \kappa^\top V(s)$ for all $s\leq s^*$. This implies that 
\begin{align}\label{Vineq1}
         c\kappa^\top V'(s)>\kappa^\top D\int_\mathbb{R}J(s-s')(V(s')-V(s))ds'+\dfrac{1}{2}\Lambda \kappa^\top V(s)\mbox{ for all }s\leq s^*.
 \end{align}
We claim that for any $c\in\mathbb{R}$, $V$ is integrable on $(-\infty,s^*].$
To prove this, note that for $t<s\leq s^*,$
    \begin{align}\label{Vineq2}
        c\kappa^\top(V(s)-V(t))>\kappa^\top D\int_t^s\int_\mathbb{R}J(\tau-s')(V(s')-V(\tau))ds'd\tau+\dfrac{1}{2}\Lambda \kappa^\top \int_t^s V(\tau)d\tau.
    \end{align}
    Note that
    \begin{align*}
        &\int_t^s\int_\mathbb{R}J(\tau-s')(V(s')-V(\tau))ds'd\tau
        =\int_\mathbb{R}J(s')\int_t^s(V(\tau-s')-V(\tau))d\tau ds'\\
        =&\int_\mathbb{R}J(s')(-s')\int_t^s\int_0^1V'(\tau-\theta s')d\theta d\tau ds'
        =\int_\mathbb{R}J(s')(-s')\int_0^1\int_t^sV'(\tau-\theta s') d\tau d\theta ds'\\
        =&\int_\mathbb{R}J(s')(-s')\int_0^1(V(s-\theta s')-V(t-\theta s')) d\theta ds'.
    \end{align*}
    Thus,
    \begin{align*}
        \bigg|\int_t^s\int_\mathbb{R}J(\tau-s')(V(s')-V(\tau))ds'd\tau\bigg|_1\leq 2\sup_{s\in\mathbb{R}}|V(s)|_1\int_\mathbb{R}J(s')|s'|ds'=:I.
    \end{align*}
    Here, $I<\infty$ by (J1).
    Thanks to \eqref{Vineq2}, we have
    \begin{align}\label{Vineq3}
        c\kappa^\top(V(s)-V(t))>-I\theta+\dfrac{1}{2}\Lambda \kappa^\top \int_t^s V(\tau)d\tau,\quad t<s\leq s^*,
    \end{align}
    where $\theta$ is the maximal entry of $\kappa^\top D.$
    Taking $t\to-\infty$ in \eqref{Vineq3},
    since each component of $V$ is positive, we see that $V$ is integrable on $(-\infty,s^*],$ using $V(-\infty)=0_n.$

    Now, let $z(s):=\kappa^\top DV(s)$. Then for $s<s^*$,
    \begin{align*}
        &\int^{s}_{-\infty}\int_{\mathbb{R}}J(s')(z(\tau-s')-z(\tau))ds'd\tau
        =\int_{\mathbb{R}}J(s')\int_{-\infty}^{s}(z(\tau-s')-z(\tau))d\tau ds'\\
        =&\int_{\mathbb{R}}J(s')\bigg(\int_{-\infty}^{s-s'}z(\tau)d\tau-\int_{-\infty}^{s}z(\tau)d\tau\bigg)ds'
        =\int_{\mathbb{R}}J(s')\int_{s}^{s-s'}z(\tau)d\tau ds'\\
        =&\int_{-\infty}^0J(s')\int_{s}^{s-s'}z(\tau)d\tau ds'-\int^{\infty}_0J(s')\int_{s-s'}^{s}z(\tau)d\tau ds'\\
        =&\int^{\infty}_0J(s')\bigg(\int_{s}^{s+s'}z(\tau)d\tau-\int_{s-s'}^{s}z(\tau)d\tau\bigg)ds',
    \end{align*}
    where we have used (J2) in the last step.
    Hence,
    \begin{align*}
        c\kappa^\top V(s)>\int^{\infty}_0J(s')\bigg(\int_{s}^{s+s'}z(\tau)d\tau-\int_{s-s'}^{s}z(\tau)d\tau\bigg)ds'\mbox{ for any }s\leq s^*,
    \end{align*}
    and thus,
    \begin{align}\label{Vineq4}
        c\kappa^\top \int_{-\infty}^{s^*}V(s)ds>\int_{-\infty}^{s^*}\int^{\infty}_0J(s')\bigg(\int_{s}^{s+s'}z(\tau)d\tau-\int_{s-s'}^{s}z(\tau)d\tau\bigg)ds'ds.
    \end{align}
    After some elementary calculations using Fubini's theorem, we can see that
    the right-hand side of \eqref{Vineq4} is equal to
    \begin{align*}
        \int_0^\infty J(s')\bigg(\int_{s^*}^{s^*+s'}(s^*-\tau+s')z(\tau)d\tau +\int_{s^*-s'}^{s^*}(\tau-s^*+s')z(\tau)d\tau \bigg) ds',
    \end{align*}
    which is a positive number.
    Since $\kappa^\top \int_{-\infty}^{s^*}V(s)ds>0,$ we then deduce from \eqref{Vineq4} that $c>0.$
    This completes the proof of Proposition \ref{lmacpositive}.
\end{proof}

\subsection{Proof of Proposition \ref{superexistence}}\label{sec:proofc>c*}
To prove the existence of supercritical traveling waves, we first construct super- and sub-solutions.

By (H1), choose the constants $\underline{\lambda},\overline{\lambda}$ by Lemma \ref{lmalambda1} with $P=D_vg(E_0)$ and $\mathcal{D}=D$ satisfying $0<\underline{\lambda}<\overline{\lambda}<\infty$. 
Also, let $\kappa=(\kappa_1,\dots,\kappa_n)^\top$ be the right positive Perron eigenvector of $H_{\underline{\lambda},c}+D_vg(E_0),$ i.e., 
\begin{align*}
    (H_{\underline{\lambda},c}+D_vg(E_0))\kappa=0_n,
\end{align*}
where $H_{\lambda,c}$ is defined as in \eqref{defofmlambda}.
In view of assumption (G2), the quantity
$$\chi_i:=\sum_{j=1}^n\sup_{\xi\in[0,\infty)}g_{ij}^0(\xi)+1$$
is a finite positive constant for each $i\in[n].$
Then we can choose, by (H2), a positive vector $q=(q_1,\dots,q_n)^\top$ such that 
\begin{align}\label{Mq<chi}
    \mathbb{M}q<-K\chi^\top,
\end{align}
where $\chi=(\chi_1,\dots,\chi_n)$.
As we will see, a pair of super- and subsolutions can be given by
\begin{align*}
    \overline{U}(s)&\equiv K,\quad\overline{V}_i(s)=\min\{\kappa_i e^{\underline{\lambda}s},q_i\};\\
    \underline{U}(s)&=\max\{K-\sigma_0e^{\alpha s},0\},\quad\underline{V}_i(s)=\max\{\kappa_i e^{\underline{\lambda}s}(1-\sigma_ie^{\varepsilon s}),0\},\quad i\in[n],
\end{align*}
where $\alpha,\varepsilon,\sigma_i,i\in\{0\}\cup[n],$  are positive constants to be determined. Also, we denote $\overline{V}=(\overline{V}_1,\dots,\overline{V}_n)$ and 
    $\underline{V}=(\underline{V}_1,\dots,\underline{V}_n).$

\begin{lemma}\label{lmaoverlinev}
    For $i\in[n],$
    $$c\overline{V}_i'(s)\geq d_i\int_{\mathbb{R}}J(s-s')(\overline{V}_i(s')-\overline{V}_i(s))ds'+g_i(K,\overline{V}(s))\mbox{ for all }s\neq \overline{s}_i:=\frac{1}{\underline{\lambda}}\ln\frac{q_i}{\kappa_i}.$$ 
\end{lemma}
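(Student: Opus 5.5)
We verify the differential inequality separately on the two sides of the switching point $\overline{s}_i$. On each side $\overline{V}_i$ is one of its two smooth branches. The key idea is that, since $\overline{V}_i=\min\{\kappa_ie^{\underline{\lambda}s},q_i\}$ lies below each branch, the nonlocal term can be bounded above by replacing $\overline{V}_i(s')$ with whichever branch is active at $s$. Throughout we use the sublinearity $g(K,v)\le D_vg(E_0)v$ for $v\ge0$. This follows from (C1), (C2), (C4): concavity gives $g_{ij}^0(\xi)\le (g_{ij}^0)'(0)\xi$, and the coefficients of $v_j$ with $i\ne j$ in $g_i$ are nonnegative.

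\textbf{Case $s<\overline{s}_i$.} Here $\overline{V}_i(s)=\kappa_ie^{\underline{\lambda}s}$. Since $\overline{V}_i(s')\le\kappa_ie^{\underline{\lambda}s'}$ for all $s'$,
\begin{align*}
d_i\int_{\mathbb{R}}J(s-s')\big(\overline{V}_i(s')-\overline{V}_i(s)\big)ds'\le d_im(\underline{\lambda})\kappa_ie^{\underline{\lambda}s},
\end{align*}
where the integral is finite by (J1). For the reaction term we use $\overline{V}_j(s)\le\kappa_je^{\underline{\lambda}s}$, together with $g_{ij}^0$ being nondecreasing and $m_{ij}\ge0$ for $j\ne i$:
\begin{align*}
g_i(K,\overline{V}(s))=K\sum_j g_{ij}^0(\overline{V}_j)+\sum_{j\neq i}m_{ij}\overline{V}_j+m_{ii}\overline{V}_i\le \sum_j\big(K(g_{ij}^0)'(0)+m_{ij}\big)\kappa_je^{\underline{\lambda}s}.
\end{align*}
The diagonal term is identical on both sides, because $\overline{V}_i(s)=\kappa_ie^{\underline{\lambda}s}$ at this $s$. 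Summing the two bounds gives $\big[(m(\underline{\lambda})D+D_vg(E_0))\kappa\big]_ie^{\underline{\lambda}s}=c\underline{\lambda}\kappa_ie^{\underline{\lambda}s}=c\overline{V}_i'(s)$, using the eigen-relation $(H_{\underline{\lambda},c}+D_vg(E_0))\kappa=0_n$.

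\textbf{Case $s>\overline{s}_i$.} Here $\overline{V}_i(s)=q_i$ and $\overline{V}_i'(s)=0$. Since $\overline{V}_i\le q_i$ everywhere, the nonlocal term is $\le0$. For the reaction term, $g_{ij}^0\le\sup g_{ij}^0$, so $K\sum_jg_{ij}^0(\overline{V}_j)\le K(\chi_i-1)<K\chi_i$. For the linear part, $\overline{V}_j\le q_j$ and $m_{ij}\ge0$ for $j\ne i$ give $\sum_{j\ne i}m_{ij}\overline{V}_j+m_{ii}q_i\le(\mathbb{M}q)_i$. Hence $g_i(K,\overline{V}(s))<K\chi_i+(\mathbb{M}q)_i<0$ by \eqref{Mq<chi}, and the right side of the inequality is negative, which is $\le0=c\overline{V}_i'(s)$.

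The only delicate point is the first case: the off-diagonal contributions $m_{ij}\overline{V}_j$ and $g_{ij}^0(\overline{V}_j)$ must be bounded using the \emph{exponential} branch for $\overline{V}_j$, even when $j$ is already saturated at $q_j$. This works because $\overline{V}_j$ is the minimum, together with the sign structure of $\mathbb{M}$ and the monotonicity and concavity of $g_{ij}^0$; no cooperativity in $u$ is needed since $u=K$ is frozen.
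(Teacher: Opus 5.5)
Your proof is correct and follows essentially the same route as the paper. For $s<\overline{s}_i$ you bound the nonlocal term using the exponential branch and the reaction term by its linearization at $E_0$, then close with $(H_{\underline{\lambda},c}+D_vg(E_0))\kappa=0_n$; for $s>\overline{s}_i$ you use $\overline{V}_i\le q_i$, (G1) and \eqref{Mq<chi}. The only cosmetic difference is that the paper first writes $g_i(K,\overline{V}(s))\le g_i(K,q)$ via (C2) and then bounds $g_i(K,q)$, whereas you bound the individual terms directly.
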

\begin{proof}
First assume that $s<\overline{s}_i.$
    A simple calculation yields
    \begin{align*}
        \int_{\mathbb{R}}J(s-s')(\overline{V}_i(s')-\overline{V}_i(s))ds' \leq \kappa_ie^{\underline{\lambda}s}\bigg(\int_{\mathbb{R}}J(s')e^{-\underline{\lambda}s'}ds'-1\bigg),
    \end{align*}
    where we used $\overline{V}_i(s')\leq \kappa_ie^{\underline{\lambda}s'} $.
    By Taylor's theorem, by assumptions (C2) and (C4),
    \begin{align*}
        g_i(K,\overline{V}(s))\leq\sum_{j=1}^n\dfrac{\partial g_i}{\partial v_j}(E_0)\overline{V}_j(s).
    \end{align*}
    Hence, for $i\in[n],$
    \begin{align*}
        &e^{-\underline{\lambda}s}\left[-c\overline{V}_i'(s)+d_i\int_{\mathbb{R}}J(s-s')(\overline{V}_i(s')-\overline{V}_i(s))ds'+g_i(K,\overline{V}(s))\right]\\
        \leq&-c\underline{\lambda}\kappa_i+d_i\kappa_i\bigg(\int_{\mathbb{R}}J(s')e^{-\underline{\lambda}s'}ds'-1\bigg)+\sum_{j=1}^n\dfrac{\partial g_i}{\partial v_j}(E_0)\kappa_j
        =((H_{\underline{\lambda},c}+D_vg(E_0))\kappa)_i=0.
    \end{align*}
    Now, assume that $s>\overline{s}_i.$ Then $\overline{V}_i(s)=q_i$. 
    Since $\overline{V}_j(s)\leq q_j$ for $j\neq i$, assumption (C2) implies that
    $$g_i(K,\overline{V}(s))\leq g_i(K,q)\mbox{ for all }s\in\mathbb{R}.$$
    Also, since $\overline V_i(s')\le q_i$ for all $s'\in\mathbb{R}$, we have
    $\int_{\mathbb{R}}J(s-s')
    (\overline V_i(s')-\overline V_i(s)) ds'\le0$.
    Therefore,
    \begin{align*}
        \int_{\mathbb{R}}J(s-s')(\overline{V}_i(s')-\overline{V}_i(s))ds'+g_i(K,\overline{V}(s))\leq g_i(K,q)\leq K\chi_i+\sum_{j=1}^n m_{ij}q_j\leq0=\overline{V}_i'(s),
    \end{align*}
    where we have used \eqref{Mq<chi} in the last inequality.
    This proves the desired inequality. 
\end{proof}

\begin{lemma}\label{underlineu}
    There exist $\sigma_0>0,\alpha\in(0,\min\{\overline{\lambda}-\underline{\lambda},\underline{\lambda}\})$ such that the function $\underline{U}(s)$ satisfies
    $$c\underline{U}'(s)\leq d_0\int_{\mathbb{R}}J(s-s')(\underline{U}(s')-\underline{U}(s))ds'+f(\underline{U}(s))-g_0(\underline{U}(s),\overline{V}(s))\mbox{ for all }s\neq\underline{s}_0:=\dfrac{1}{\alpha}\ln\dfrac{K}{\sigma_0}.$$    
\end{lemma}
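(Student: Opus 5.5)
Since $\overline V_j(s)\le q_j$ and $\overline V_j(s)\le \kappa_j e^{\underline\lambda s}$, I would first reduce to a scalar inequality. By (C1), (C2), (C4) we have $g^0_{ij}(\xi)\le (g^0_{ij})'(0)\xi$ for $\xi\ge0$. Hence
\[
g_0(\underline U(s),\overline V(s))\le K\sum_{i,j}(g_{ij}^0)'(0)\kappa_j e^{\underline\lambda s}=:C_1e^{\underline\lambda s},
\]
using $0\le\underline U\le K$. It therefore suffices to check
\[
c\underline U'\le d_0\Big(\int J(s-s')\underline U(s')ds'-\underline U(s)\Big)+\delta(K-\underline U)-C_1e^{\underline\lambda s}
\]
for $s\ne\underline s_0$.

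I would split into two regions. For $s>\underline s_0$ we have $\underline U(s)=0$ and $\underline U'(s)=0$. The right-hand side is then $d_0\int J(s-s')\underline U(s')ds'+\delta K-g_0(0,\overline V)$, and $g_0(0,\cdot)=0$ because $g_i^0=u\sum_j g^0_{ij}$. All remaining terms are nonnegative, so the inequality is immediate. I would use the exact form $g_0(0,\cdot)=0$ here rather than the linear bound.

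For $s<\underline s_0$ we have $\underline U=K-\sigma_0e^{\alpha s}>0$, and $\underline U(s')\ge K-\sigma_0e^{\alpha s'}$ for all $s'$. Hence
\[
\int J(s-s')\underline U(s')ds'-\underline U(s)\ge -\sigma_0 e^{\alpha s}m(\alpha),
\]
with $m$ as in \eqref{defofmlambda}. The required inequality then becomes
\[
-c\alpha\sigma_0e^{\alpha s}\le -d_0m(\alpha)\sigma_0e^{\alpha s}+\delta\sigma_0e^{\alpha s}-C_1e^{\underline\lambda s},
\]
that is,
\[
\sigma_0\,\Delta_c(\alpha)\,e^{\alpha s}\ge C_1e^{\underline\lambda s},\qquad \Delta_c(\alpha)=c\alpha-d_0m(\alpha)+\delta .
\]
I would choose $\alpha>0$ small, below $\min\{\overline\lambda-\underline\lambda,\underline\lambda,\nu_c\}$. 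Then $\Delta_c(\alpha)>0$, since $\Delta_c(0)=\delta>0$ and $\Delta_c>0$ on $(0,\nu_c)$. Because $\alpha<\underline\lambda$, the condition reduces to $\sigma_0\Delta_c(\alpha)\ge C_1e^{(\underline\lambda-\alpha)s}$ for $s<\underline s_0$. The right side is increasing in $s$, so it is enough to check it at $s=\underline s_0=\alpha^{-1}\ln(K/\sigma_0)$. This gives
\[
\sigma_0\Delta_c(\alpha)\ge C_1 (K/\sigma_0)^{(\underline\lambda-\alpha)/\alpha}.
\]
The left side grows in $\sigma_0$ and the right side decays, so the inequality holds for $\sigma_0$ large.

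The main obstacle is keeping the choice of $\alpha$ compatible with both constraints. It must satisfy $\alpha<\underline\lambda$ so that the incidence term is dominated, and it must also give $\Delta_c(\alpha)>0$. Taking $\alpha$ small achieves both. Large $\sigma_0$ is harmless, since it only moves $\underline s_0$ to the left.
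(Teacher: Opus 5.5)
Your proof is correct, and its structure is the paper's. You use the same split at $\underline{s}_0$ and the same lower bound $-\sigma_0e^{\alpha s}m(\alpha)$ for the nonlocal term. You also use the same linear majorant $\sum_j\frac{\partial g_0}{\partial v_j}(E_0)\kappa_je^{\underline{\lambda}s}$ for the incidence term, and the same order of choices: small $\alpha$, then large $\sigma_0$. Your evaluation at the endpoint $s=\underline{s}_0$ plays the role of the paper's choice $\sigma_0>K$, which gives $e^{(\underline{\lambda}-\alpha)s}<1$ on $s<\underline{s}_0$.

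The one genuine difference is where the positive coefficient of $\sigma_0e^{\alpha s}$ comes from.
\begin{itemize}
\item The paper discards $f(\underline{U})=\delta\sigma_0e^{\alpha s}\ge0$ and shows that $c\alpha-d_0m(\alpha)>0$ for small $\alpha$. This uses $c>0$ and $m(\alpha)=o(\alpha)$, and the latter rests on $\int_{\mathbb{R}}J(s')s'\,ds'=0$, i.e.\ the symmetry (J2).
\item You keep the reaction term and use $\Delta_c(\alpha)=c\alpha-d_0m(\alpha)+\delta>0$. For small $\alpha$ this follows from $\Delta_c(0)=\delta>0$ and continuity of $m$ alone. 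You do not need the full fact that $\Delta_c>0$ on $(0,\nu_c)$.
\end{itemize}
So your version does not depend on the sign of $c$ or on (J2). The paper's version does not depend on the damping $\delta$, so it would still work for a model without the $\delta(K-u)$ term, provided $c>0$ and $J$ is symmetric.
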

\begin{proof}
    If $s>\underline{s}_0,$ then $\underline{U}(s)=0$ and so the inequality holds obviously. Suppose that $s<\underline{s}_0.$ Then
    \begin{align*}
        &d_0\int_{\mathbb{R}}J(s-s')(\underline{U}(s')-\underline{U}(s))ds'-c\underline{U}'(s)+f(\underline{U}(s))-g_0(\underline{U},\overline{V})(s)\\
        \geq&d_0\sigma_0\int_{\mathbb{R}}J(s-s')(e^{\alpha s}-e^{\alpha s'})ds'+c\sigma_0\alpha e^{\alpha s}-g_0(K,\overline{V}(s))\\
        \geq &d_0\sigma_0e^{\alpha s}\bigg(1-\int_{\mathbb{R}}J(s')e^{-\alpha s'}ds'\bigg)+c\sigma_0\alpha e^{\alpha s}-\sum_{j=1}^n\dfrac{\partial g_0}{\partial v_j}(E_0)\kappa_je^{\underline{\lambda}s}\\
        =&e^{\alpha s}\left[d_0\sigma_0\alpha\bigg(\frac{c}{d_0}-\int_{\mathbb{R}}J(s')\frac{e^{-\alpha s'}-1}{\alpha}ds'\bigg)-\sum_{j=1}^n\dfrac{\partial g_0}{\partial v_j}(E_0)\kappa_je^{(\underline{\lambda}-\alpha)s}\right].
    \end{align*}
    Note that 
    $$\lim\limits_{\alpha\to 0^+}\frac{e^{-\alpha s'}-1}{\alpha}=-s'\mbox{ for each }s'\in\mathbb{R}.$$
    By the Lebesgue dominated convergence theorem, and using (J1) and the symmetry condition (J2),
    we have
    $$\lim\limits_{\alpha\to 0^+}\int_{\mathbb{R}}J(s')\frac{e^{-\alpha s'}-1}{\alpha}ds'=\int_{\mathbb{R}}J(s')(-s')ds'=0.$$
    Therefore, we may choose an $\alpha\in(0,\min\{\overline{\lambda}-\underline{\lambda},\underline{\lambda}\})$ such that
    $$\frac{c}{d_0}-\int_{\mathbb{R}}J(s')\frac{e^{-\alpha s'}-1}{\alpha}ds'>0.$$
    After choosing $\alpha,$ we may choose a sufficiently large $\sigma_0>K$ such that
    \begin{align*}
        d_0\sigma_0\alpha\bigg(\frac{c}{d_0}-\int_{\mathbb{R}}J(s')\frac{e^{-\alpha s'}-1}{\alpha}ds'\bigg)>\sum_{j=1}^n\dfrac{\partial g_0}{\partial v_j}(E_0)\kappa_j.
    \end{align*}
    Also, for $s<\underline{s}_0,$ $e^{(\underline{\lambda}-\alpha)s}<e^{[(\underline{\lambda}-\alpha)/\alpha]\ln(K/\sigma_0)}=(\frac{K}{\sigma_0})^{(\underline{\lambda}-\alpha)/\alpha}<1.$
    Since $\dfrac{\partial g_0}{\partial v_j}(E_0)\geq 0$ (by assumption (C2)),
    \begin{align*}
        \sum_{j=1}^n\dfrac{\partial g_0}{\partial v_j}(E_0)\kappa_j\geq\sum_{j=1}^n\dfrac{\partial g_0}{\partial v_j}(E_0)\kappa_je^{(\underline{\lambda}-\alpha)s}\mbox{ for }s<\underline{s}_0.
    \end{align*}
    With such a choice of $\alpha$ and $\sigma_0,$ we obtain the desired inequality.
\end{proof}

\begin{lemma}\label{lmaunderlinev}
    Suppose that the constants $\alpha$ and $\sigma_0$ are chosen as in Lemma \ref{underlineu}. Then there are constants $\varepsilon>0$ and $\sigma_i>0,~i\in[n],$ such that for each $i\in[n],$ we have
    $$c\underline{V}_i'(s)\leq d_i\int_{\mathbb{R}}J(s-s')(\underline{V}_i(s')-\underline{V}_i(s))ds'+g_i(\underline{U}(s),\underline{V}(s))\mbox{ for all }s\neq \underline{s}_i:=-\dfrac{1}{\varepsilon}\ln\sigma_i.$$
\end{lemma}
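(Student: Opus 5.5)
We verify the subsolution inequality for $\underline{V}_i$ separately on the two sides of the corner point $\underline{s}_i$. For $s>\underline{s}_i$ we have $\underline{V}_i(s)=0$ and $\underline{V}_i'(s)=0$. The nonlocal term is $d_i\int J(s-s')\underline{V}_i(s')ds'\ge0$. Moreover, $g_i(\underline{U},\underline{V})(s)=\underline{U}(s)\sum_j g^0_{ij}(\underline{V}_j(s))+\sum_{j\neq i}m_{ij}\underline{V}_j(s)\ge0$ by (C1)--(C2), $\underline{U}\ge0$, and $m_{ij}\ge0$ for $j\ne i$; the $m_{ii}$ term vanishes because $\underline{V}_i(s)=0$. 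So the inequality is trivial there, and all the work is on $s<\underline{s}_i$, where $\underline{V}_i(s)=\kappa_ie^{\underline{\lambda}s}-\sigma_i\kappa_ie^{(\underline{\lambda}+\varepsilon)s}>0$.

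For $s<\underline{s}_i$, we first bound the nonlocal term from below using $\underline{V}_i(s')\ge \kappa_ie^{\underline{\lambda}s'}-\sigma_i\kappa_ie^{(\underline{\lambda}+\varepsilon)s'}$ for all $s'$ (the max with $0$ only increases it). This gives
\[
d_i\int_{\mathbb{R}}J(s-s')(\underline{V}_i(s')-\underline{V}_i(s))ds'\ge d_im(\underline{\lambda})\kappa_ie^{\underline{\lambda}s}-\sigma_id_im(\underline{\lambda}+\varepsilon)\kappa_ie^{(\underline{\lambda}+\varepsilon)s}.
\]
Next we linearize $g_i$. Using (C4) and $g^0_{ij}\in\mathcal{C}^2$, we have $g^0_{ij}(\xi)\ge (g^0_{ij})'(0)\xi-C\xi^2$ for $\xi\in[0,\max_j\kappa_j]$, and $\underline{U}\ge K-\sigma_0e^{\alpha s}$. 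Since $\underline{V}_j\le\kappa_je^{\underline{\lambda}s}$ and $\underline{V}_j\ge0$, this yields
\[
g_i(\underline{U},\underline{V})(s)\ge\sum_j\Big(K(g^0_{ij})'(0)+m_{ij}\Big)\underline{V}_j(s)-C_1\big(e^{(\underline{\lambda}+\alpha)s}+e^{2\underline{\lambda}s}\big)
\]
for some constant $C_1$ depending on $\sigma_0$, $\kappa$ and the $C^2$ bounds. The subtle point is that $\underline{V}_j$ is $\kappa_je^{\underline{\lambda}s}(1-\sigma_je^{\varepsilon s})$ only where $s<\underline{s}_j$; elsewhere it is $0$. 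The off-diagonal terms have nonnegative coefficients, so replacing $\underline{V}_j$ by the smaller quantity $\kappa_je^{\underline{\lambda}s}(1-\sigma_je^{\varepsilon s})$ is a valid lower bound. The diagonal coefficient may be negative, but there $\underline{V}_i$ is exactly the exponential expression. To make everything uniform I will take all $\sigma_j$ equal, $\sigma_j=\sigma$, so that all $\underline{s}_j$ coincide and the combination is exactly $\kappa e^{\underline{\lambda}s}-\sigma\kappa e^{(\underline{\lambda}+\varepsilon)s}$.

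Collecting terms and using $(H_{\underline{\lambda},c}+D_vg(E_0))\kappa=0$, the $e^{\underline{\lambda}s}$ terms cancel. Writing $\mathcal{R}_i(s)$ for the right side of the claimed inequality, what remains is
\[
\mathcal{R}_i(s)-c\underline{V}_i'(s)\ge-\sigma e^{(\underline{\lambda}+\varepsilon)s}\big((H_{\underline{\lambda}+\varepsilon,c}+D_vg(E_0))\kappa\big)_i-C_1\big(e^{(\underline{\lambda}+\alpha)s}+e^{2\underline{\lambda}s}\big).
\]
Now choose $\varepsilon\in(0,\min\{\alpha,\underline{\lambda}\})$; this also gives $\underline{\lambda}+\varepsilon<\overline{\lambda}$ since $\alpha<\overline{\lambda}-\underline{\lambda}$. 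The step I expect to be the main obstacle is the sign of the leading term: Lemma~\ref{lmalambda1}(c) only gives $\Lambda_1(H_{\underline{\lambda}+\varepsilon,c}+D_vg(E_0))<0$, not componentwise negativity of $(H_{\underline{\lambda}+\varepsilon,c}+D_vg(E_0))\kappa$ for the fixed vector $\kappa$. I would handle this by a first-order expansion in $\varepsilon$. Because $H$ depends on $\lambda$ only through the diagonal,
\[
(H_{\underline{\lambda}+\varepsilon,c}+D_vg(E_0))\kappa=\varepsilon\big(m'(\underline{\lambda})D-cI\big)\kappa+O(\varepsilon^2),
\]
so we need $c>m'(\underline{\lambda})d_i$ for every $i$, which does not obviously follow. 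If it fails, the fallback is to replace $\kappa$ in the correction term by the Perron vector $\kappa_\varepsilon$ of $H_{\underline{\lambda}+\varepsilon,c}+D_vg(E_0)$, taking $\underline{V}_i=\max\{\kappa_ie^{\underline{\lambda}s}-\sigma\kappa_{\varepsilon,i}e^{(\underline{\lambda}+\varepsilon)s},0\}$. Then the leading term equals $-\sigma\Lambda_1(H_{\underline{\lambda}+\varepsilon,c}+D_vg(E_0))\kappa_{\varepsilon,i}e^{(\underline{\lambda}+\varepsilon)s}>0$. Only the ratio $\kappa_{\varepsilon,i}/\kappa_i$ changes the zero point, and the off-diagonal lower bound is still valid because we use the full expression wherever it is positive and $0$ otherwise, with nonnegative coefficients.

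Finally, take $\sigma$ large. On $s<\underline{s}_i$ we have $e^{\varepsilon s}<1/\sigma$ (up to bounded ratios). The error terms satisfy $e^{(\underline{\lambda}+\alpha)s}+e^{2\underline{\lambda}s}\le2e^{(\underline{\lambda}+\varepsilon)s}e^{(\min\{\alpha,\underline{\lambda}\}-\varepsilon)(s)}$, where the last factor is bounded by $1$ for $s\le0$, and $\underline{s}_i\le0$ once $\sigma\ge1$. Hence the positive leading term, which is of size proportional to $\sigma e^{(\underline{\lambda}+\varepsilon)s}$, dominates $C_1(\cdots)$ once $\sigma$ exceeds a constant depending only on $C_1$ and $|\Lambda_1(H_{\underline{\lambda}+\varepsilon,c}+D_vg(E_0))|$. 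This proves the inequality for $s\neq\underline{s}_i$.
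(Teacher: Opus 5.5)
Your fallback construction, taking the correction vector to be the Perron vector of $H_{\underline{\lambda}+\varepsilon,c}+D_vg(E_0)$ (i.e.\ $\sigma_i\kappa_i$ proportional to it), is exactly the paper's choice $\sigma_j=\eta_0\eta_j/\kappa_j$. With it, your argument is correct and essentially the paper's proof: the trivial region $s>\underline{s}_i$, cancellation of the $e^{\underline{\lambda}s}$ terms via $(H_{\underline{\lambda},c}+D_vg(E_0))\kappa=0_n$, and domination of the quadratic errors by a large amplitude. Drop the equal-$\sigma$ version rather than keeping it as the primary route, because it genuinely fails: $c$ exceeds only the slope of $\lambda\mapsto\Lambda_1(m(\lambda)D+D_vg(E_0))$ at $\underline{\lambda}$, which is a positively weighted average of the $m'(\underline{\lambda})d_i$, so for unequal $d_i$ and $c$ close to $c^*$ the component of $(H_{\underline{\lambda}+\varepsilon,c}+D_vg(E_0))\kappa$ with the largest $d_i$ is positive for every $\varepsilon>0$ (by convexity of $m$).
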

\begin{proof} 
First, take 
$\varepsilon=\frac{1}{2}\min\{\alpha,\overline{\lambda}-\underline{\lambda},\underline{\lambda}\},$
    where
    $\overline{\lambda},\underline{\lambda}$ are determined by Lemma \ref{lmalambda1} with $P=D_vg(E_0).$
Fix an $i\in[n]$ and assume $s<\underline{s}_i,$ as the result is trivial when $s>\underline{s}_i$.
    Similarly as in \eqref{expandgatuv},
    \begin{align*}
        g_i(\underline{U}(s),\underline{V}(s))=&\sum_{j=1}^n\dfrac{\partial g_i}{\partial v_j}(E_0)\underline{V}_j(s)+G_i(s),
    \end{align*}
    where 
    \begin{align*}
        G_i(s):=\sum_{j=1}^n\dfrac{\partial^2 g_i}{\partial u\partial v_j}(P_{ij}(s))(\underline{U}(s)-K)\underline{V}_j(s)
        +\dfrac{1}{2}\sum_{j,k=1}^n\dfrac{\partial^2 g_i}{\partial v_k\partial v_j}(P_{0,i}(s))\underline{V}_j(s)\underline{V}_k(s)
    \end{align*}
    with $P_{0,i}(s)=(\underline{U}(s),\xi_{0,i}(s)\underline{V}(s))$, $P_{ij}(s)=(K+\xi_{ij}(s)(\underline{U}(s)-K),0_n)$ for some $\xi_{0,i}(s),\xi_{ij}(s)\in[0,1]$ for $j\in[n].$ Then
    \begin{align*}
        g_i(\underline{U}(s),\underline{V}(s))\geq&e^{\underline{\lambda}s}\bigg(\sum_{j=1}^n\dfrac{\partial g_i}{\partial v_j}(E_0)\kappa_j(1-\sigma_je^{\varepsilon s})+\sum_{j=1}^n\dfrac{\partial g_i}{\partial u\partial v_j}(P_{ij}(s))(\underline{U}(s)-K)\kappa_j(1-\sigma_je^{\varepsilon s})_+\\&+\frac{1}{2}e^{\underline{\lambda}s}\sum_{j,k=1}^n\dfrac{\partial g_i}{\partial v_k\partial v_j}(P_{0,i}(s))\kappa_j\kappa_k(1-\sigma_je^{\varepsilon s})_+(1-\sigma_ke^{\varepsilon s})_+\bigg).
    \end{align*}
    On the other hand, 
    \begin{align*}
        \int_{\mathbb{R}}J(s-s')(\underline{V}_i(s')-\underline{V}_i(s))ds'
        \geq
        &\kappa_i\int_{\mathbb{R}} J(s-s')[(e^{\underline{\lambda}s'}-\sigma_ie^{(\underline{\lambda}+\varepsilon)s'})-(e^{\underline{\lambda}s}-\sigma_ie^{(\underline{\lambda}+\varepsilon)s})]ds'\\
        =&\kappa_i\bigg[e^{\underline{\lambda}s}\int_{\mathbb{R}}J(s')(e^{-\underline{\lambda}s'}-1)ds'-\sigma_ie^{(\underline{\lambda}+\varepsilon)s}\int_{\mathbb{R}}J(s')(e^{-(\underline{\lambda}+\varepsilon)s'}-1)ds'\bigg].
    \end{align*}
    Let
    \begin{align*}
        K_i(s):=e^{-\underline{\lambda}s}\bigg(d_i\int_{\mathbb{R}}J(s-s')(\underline{V}_i(s')-\underline{V}_i(s))ds'-c\underline{V}_i'(s) +g_i(\underline{U}(s),\underline{V}(s))\bigg).
    \end{align*}
    Then
    \begin{align*}
        K_i(s)
        \geq&\kappa_id_i\bigg[\int_{\mathbb{R}}J(s')(e^{-\underline{\lambda}s'}-1)ds'-\sigma_ie^{\varepsilon s}\int_{\mathbb{R}}J(s')(e^{-(\underline{\lambda}+\varepsilon)s'}-1)ds'\bigg]+c\kappa_i\sigma_i(\underline{\lambda}+\varepsilon)e^{\varepsilon s}-c\kappa_i\underline{\lambda}\\
        &+\sum_{j=1}^n\dfrac{\partial g_i}{\partial v_j}(E_0)\kappa_j(1-\sigma_je^{\varepsilon s})+\sum_{j=1}^n\dfrac{\partial g_i}{\partial u\partial v_j}(P_{ij}(s))(\underline{U}(s)-K)\kappa_j(1-\sigma_je^{\varepsilon s})_+\\
        &+e^{\underline{\lambda}s}\sum_{j,k=1}^n\dfrac{\partial g_i}{\partial v_k\partial v_j}(P_{0,i}(s))\kappa_j\kappa_k(1-\sigma_je^{\varepsilon s})_+(1-\sigma_ke^{\varepsilon s})_+\\
        \geq&-e^{\varepsilon s}I_i+\sum_{j=1}^n\dfrac{\partial g_i}{\partial u\partial v_j}(P_{ij}(s))(\underline{U}(s)-K)\kappa_j(1-\sigma_je^{\varepsilon s})_+\\&+e^{\underline{\lambda}s}\sum_{j,k=1}^n\dfrac{\partial g_i}{\partial v_k\partial v_j}(P_{0,i}(s))\kappa_j\kappa_k(1-\sigma_je^{\varepsilon s})_+(1-\sigma_ke^{\varepsilon s})_+,
    \end{align*}
    where
    $$I_i=\kappa_id_i\sigma_i\int_{\mathbb{R}}J(s')(e^{-(\underline{\lambda}+\varepsilon)s'}-1)ds'-c\kappa_i\sigma_i(\underline{\lambda}+\varepsilon)+\sum_{j=1}^n\dfrac{\partial g_i}{\partial v_j}(E_0)\kappa_j\sigma_j,$$ and we used the fact in the last inequality that 
    $$\kappa_id_i\int_{\mathbb{R}}J(s')(e^{-\underline{\lambda}s'}-1)ds'-c\kappa_i\underline{\lambda}+\sum_{j=1}^n\dfrac{\partial g_i}{\partial v_j}(E_0)\kappa_j=0,$$
    asserted by Lemma \ref{lmalambda1}. 
    Since $\underline{\lambda}+\varepsilon\in(\underline{\lambda},\overline{\lambda}),$ we see that $\Lambda_1(H_{\underline{\lambda}+\varepsilon,c}+D_vg(E_0))<0$ by Lemma \ref{lmalambda1} (c). Let $\eta=(\eta_1,\dots,\eta_n)^\top$ be the unit positive eigenvector of $H_{\underline{\lambda}+\varepsilon,c}+D_vg(E_0)$ corresponding to $\Lambda_1(H_{\underline{\lambda}+\varepsilon,c}+D_vg(E_0)).$ Then 
for each $j\in[n],$ we have    
$$((H_{\underline{\lambda}+\varepsilon,c}+D_vg(E_0))\eta)_j=\Lambda_1(H_{\underline{\lambda}+\varepsilon,c}+D_vg(E_0))\eta_j<0.$$

Write $$R_{i1}(s):=\sum_{j=1}^n\dfrac{\partial g_i}{\partial u\partial v_j}(P_{ij}(s))\kappa_j(1-\sigma_je^{\varepsilon s})_+\mbox{ and }R_{i2}(s):=\sum_{j,k=1}^n\dfrac{\partial g_i}{\partial v_k\partial v_j}(P_{0,i}(s))\kappa_j\kappa_k(1-\sigma_je^{\varepsilon s})_+(1-\sigma_ke^{\varepsilon s})_+.$$
 Since $g$ is of $\mathcal{C}^2$, there is a constant $M>0$ independent of $s$ such that $|R_{i\ell}(s)|\leq M$ for all $s$ and all $i\in[n],\ell=1,2$. Denote $\ell_j:=-((H_{\underline{\lambda}+\varepsilon,c}+D_vg(E_0))\eta)_j>0$ for $j\in[n].$ Also, take
 $$\eta_0=\max\left\{\frac{(\sigma_0+1)M}{\min\{\ell_1,\dots,\ell_n\}},\frac{\max\{\kappa_1,\dots,\kappa_n\}}{\min\{\eta_1,\dots,\eta_n\}}\max\left\{\left(\frac{K}{\sigma_0}\right)^{-\varepsilon/\alpha},\max_{i\in[n]}\left(\frac{q_i}{\kappa_i}\right)^{-\varepsilon/\underline{\lambda}},2\right\}+1\right\}.$$
 Take $\sigma_j=\frac{\eta_0\eta_j}{\kappa_j}$ for $j\in[n].$ Then for each $j\in[n],$ we have
 \begin{enumerate}[(i)]
     \item $\underline{s}_j=-\frac{1}{\varepsilon}\ln\sigma_j=-\frac{1}{\varepsilon}\ln\frac{\eta_0\eta_j}{\kappa_j}<-\frac{1}{\varepsilon}\ln\left(\frac{K}{\sigma_0}\right)^{-\varepsilon/\alpha}=\frac{1}{\alpha}\ln\frac{K}{\sigma_0}=s_0;$
     \item $\underline{s}_j<-\frac{1}{\varepsilon}\ln\left(\frac{q_i}{\kappa_i}\right)^{-\varepsilon/\underline{\lambda}}=\overline{s}_j;$
     \item $\underline{s}_j<-\frac{1}{\varepsilon}\ln 2<0.$
 \end{enumerate}
 Thus, for each $s<\underline{s}_i,$ we have $\underline{U}(s)-K=-\sigma_0 e^{\alpha s}$ and hence,
 \begin{align*}
     e^{-\varepsilon s}K_i(s)\geq\eta_0\ell_i-\sigma_0e^{(\alpha-\varepsilon)s}R_{i1}(s)+e^{(\underline{\lambda}-\varepsilon)s}R_{i2}(s)\geq\eta_0\ell_i-\sigma_0M-M\geq0.
 \end{align*}
 Then $K_i(s)\geq 0$ for each $s<\underline{s}_i,$  and so we obtain the desired inequality.
 \end{proof}

For $a>\max\limits_{i\in\{0\}\cup[n]}\max\{|\underline{s}_i|,|\overline{s}_i|\},$ denote
\begin{align*}
    \Gamma_a=\{(U,V)\in\mathcal{C}([-a,a],\mathbb{R}^{n+1}):&(U,V)(-a)=(\underline{U},\underline{V})(-a),\underline{U}(s)\leq U(s)\leq K\\&\mbox{ and }\underline{V}_i(s)\leq V_i(s)\leq\overline{V}_i(s)\mbox{ for all }s\in[-a,a],i\in[n]\}.
\end{align*}
Given $(U^0,V^0)\in\Gamma_a,$ extend it into $\mathbb{R}$ by defining
\begin{align*}
    \widetilde{U^0}(s)=\begin{cases}
        U^0(a)\mbox{ if }s>a,\\
        U^0(s)\mbox{ if }-a\leq s\leq a,\\
        \underline{U}(s)\mbox{ if }s<-a;
    \end{cases}\quad \widetilde{V^0_i}(s)=\begin{cases}
        V_i^0(a)\mbox{ if }s>a,\\
         V^0_i(s)\mbox{ if }-a\leq s\leq a,\\
        \underline{V}_i(s)\mbox{ if }s<-a,
    \end{cases}
    \quad i\in[n].
\end{align*}
It is easy to verify that 
    \begin{align*}
        \underline{U}(s)\leq\widetilde{U^0}(s)\leq K\mbox{ and }\underline{V}_i(s)\leq\widetilde{V_i^0}(s)\leq\overline{V}_i(s)\mbox{ for all }s\in\mathbb{R}.
    \end{align*}
 Define $T:\Gamma_a\to\mathcal{C}([-a,a],\mathbb{R}^{n+1})$ by
$T(U^0,V^0)=(U,V),$ where $(U,V)\in\mathcal{C}^1([-a,a],\mathbb{R}^{n+1})$ is the unique solution of the initial value problem
\begin{align}\label{IVP}
\begin{cases}
    cU'(s)=d_0\dis\int_{\mathbb{R}}J(s-s')\widetilde{U^0}(s')ds'-d_0U(s)+f(U(s))-g_0(U(s),V^0(s))=:F_0(s,U(s)),\\
    cV_i'(s)=d_i\dis\int_{\mathbb{R}}J(s-s')\widetilde{V_i^0}(s')ds'-d_iV_i(s)+g_i^0(U^0(s),V^0(s))\\
    \qquad\qquad\qquad   +\sum_{j\in[n]\setminus\{i\}}m_{ij}V_j^0(s)+m_{ii}V_i(s)=:F_i(s,V_i(s)),\\
    (U,V)(-a)=(\underline{U},\underline{V})(-a).
\end{cases}
\end{align}  
The following lemma establishes the existence result in $[-a,a]$ with a $\mathcal{C}^{1,1}$ estimate.

\begin{lemma}\label{c1estimates}
For each $a>\max\limits_{i\in\{0\}\cup[n]}\max\{|\underline{s}_i|,|\overline{s}_i|\},$ there is a  positive $(U_a,V_a)\in\Gamma_a\cap\mathcal{C}^1([-a,a],\mathbb{R}^{n+1})$ such that $$T(U_a,V_a)=(U_a,V_a).$$
  Furthermore, there exists a constant $C>0$ independent of $a$ such that 
    $$\|(U_a,V_a)\|_{\mathcal{C}^{1,1}([-a,a],\mathbb{R}^{n+1})}\leq C\mbox{ for all }a>\max\limits_{i\in\{0\}\cup[n]}\max\{|\underline{s}_i|,|\overline{s}_i|\}.$$
\end{lemma}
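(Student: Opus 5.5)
The plan is to apply Schauder's fixed point theorem to $T$ on the closed convex set $\Gamma_a\subset\mathcal{C}([-a,a],\mathbb{R}^{n+1})$, and then to extract the uniform $\mathcal{C}^{1,1}$ bound from the equations. Throughout, $c>c^*>0$ and $a$ is fixed larger than all the breakpoints $\underline{s}_i,\overline{s}_i$.

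\textbf{Step 1: $T$ is well defined and continuous.} For given $(U^0,V^0)\in\Gamma_a$, each equation in \eqref{IVP} is a scalar ODE for a single unknown ($U$ or $V_i$). The right-hand sides $F_0(s,\cdot)$ and $F_i(s,\cdot)$ are continuous in $s$ and globally Lipschitz in the unknown, because $g_0(U,V^0)=U\sum_{i,j}g_{ij}^0(V_j^0)$ is linear in $U$ with bounded coefficient. Hence each ODE has a unique $\mathcal{C}^1$ solution on $[-a,a]$. The map $(U^0,V^0)\mapsto(\widetilde{U^0},\widetilde{V^0})\mapsto$ convolution is continuous in the sup norm, since $\int J=1$. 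Continuous dependence of ODE solutions on data then gives continuity of $T$.

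\textbf{Step 2: invariance $T(\Gamma_a)\subset\Gamma_a$.} This is the main point and uses Lemmas \ref{lmaoverlinev}--\ref{lmaunderlinev} together with a scalar comparison for ODEs.
\begin{enumerate}[(i)]
\item \emph{Upper bound for $U$.} Since $\widetilde{U^0}\le K$ and $g_0\ge0$, the constant $K$ is a supersolution of the $U$-equation: $F_0(s,K)\le d_0K-d_0K+0-g_0\le0$. Comparison from $U(-a)=\underline{U}(-a)\le K$ gives $U\le K$.
\item \emph{Lower bound for $U$.} Monotonicity enters through $\widetilde{U^0}\ge\underline{U}$, and $g_0(u,V^0)\le g_0(u,\overline{V})$ by (C2). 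So Lemma \ref{underlineu} shows that $\underline{U}$ is a subsolution of $cU'=F_0(s,U)$, away from the kink $\underline{s}_0$.
\item \emph{Bounds for $V_i$.} Here I use $g_i^0(U^0,V^0)\le g_i^0(K,\overline{V})$, $m_{ij}\ge0$ for $j\ne i$, and $\widetilde{V_j^0}\le\overline{V}_j$. Lemma \ref{lmaoverlinev} then makes $\overline{V}_i$ a supersolution of the $V_i$-equation.
\item \emph{Lower bound for $V_i$.} Using $U^0\ge\underline{U}$ and $V^0\ge\underline{V}$ with (C2), Lemma \ref{lmaunderlinev} makes $\underline{V}_i$ a subsolution.
\end{enumerate}
Both $\underline{U}$ (as a maximum of smooth functions) and $\overline{V}_i$ (as a minimum) have only corner-type kinks, and these have the correct orientation for a one-sided comparison. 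I expect this to be the main obstacle. I would handle it by working with Dini derivatives: at the kink of a $\max$ (respectively $\min$), the subsolution (respectively supersolution) inequality holds in the viscosity/Dini sense. Then the standard first-touching-point argument for scalar ODEs applies, which is legitimate since the nonlocal term has been frozen through the extension.

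\textbf{Step 3: compactness, the fixed point, and the uniform bound.} On $\Gamma_a$ all functions are bounded by $K$ and $q$. Hence the right-hand sides are bounded uniformly in $a$, so $|(U,V)'|\le C$ and $T(\Gamma_a)$ is equicontinuous. By Arzel\`a--Ascoli $T$ is compact, and Schauder's theorem gives a fixed point $(U_a,V_a)$.

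Positivity of the fixed point follows from $U_a\ge\underline{U}$, together with linear ODE positivity from the positive initial data and nonnegative forcing. For $V_a$ I would use $V_{a,i}\ge\underline{V}_i$ near $-a$, and elsewhere a Lemma \ref{SMPlinearized}-type argument via $cV_i'\ge-(d_i-m_{ii})V_i$.

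For the uniform $\mathcal{C}^{1,1}$ estimate, I differentiate the fixed-point equations. The convolution term $s\mapsto\int J(s-s')\widetilde{U_a}(s')\,ds'$ is Lipschitz with constant controlled by $\sup|\widetilde{U_a}'|$, where $\widetilde{U_a}$ is Lipschitz: it agrees with $\underline{U}$ or $\underline{V}$ on $s<-a$, is constant on $s>a$, and is $\mathcal{C}^1$ on $[-a,a]$ with bounded derivative. The nonlinear terms are $\mathcal{C}^1$ compositions of Lipschitz functions. Therefore $(U_a,V_a)'$ is Lipschitz with a constant independent of $a$.
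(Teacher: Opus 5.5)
Your proposal is correct and follows essentially the same route as the paper: Schauder's theorem on $\Gamma_a$, invariance from Lemmas \ref{lmaoverlinev}--\ref{lmaunderlinev} via scalar ODE comparison with the nonlocal term frozen, Arzel\`a--Ascoli for compactness, and Lipschitz bounds for the derivatives using the globally Lipschitz extensions $\widetilde{U_a},\widetilde{V_a}$. The kinks you flag as the main obstacle are harmless: each barrier is continuous and the differential inequality for the difference holds away from finitely many points, so you can simply restart the comparison at $\underline{s}_0,\overline{s}_i,\underline{s}_i$ as the paper does, with no need for Dini derivatives or a corner-orientation argument.
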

\begin{proof}
Let $s^*:=\max\limits_{i\in\{0\}\cup[n]}\max\{|\underline{s}_i|,|\overline{s}_i|\}$ and fix $a>s^*.$
To apply Schauder's fixed point theorem, we need to show that $T$ is completely continuous and $T(\Gamma_a)\subseteq\Gamma_a$.

By applying the variation of constant formula to each equation in \eqref{IVP}, we may easily see that $T$ is Lipschitz continuous on $\Gamma_a$. Let $\{(U^0_k,V^0_k)\}$ be a uniformly bounded sequence in $\Gamma_a$ and $(U_k,V_k)=T(U^0_k,V^0_k).$ Then it is obvious that both $\{(U_k,V_k)\}$ and $\{(U_k',V_k')\}$ are uniformly bounded. This means that $\{(U_k,V_k)\}$ is equicontinuous on $[-a,a]$ and so the Arzel\'{a}-Ascoli theorem deduces that $\{(U_k,V_k)\}$ has a uniformly convergent subsequence on $[-a,a]$. This shows that $T$ is completely continuous.

    Next, we prove the inclusion. Let $(U^0,V^0)\in\Gamma_a$ and $(U,V)=T(U^0,V^0).$ 
    Recall $F_0$ from \eqref{IVP}.
    Clearly, $F_0(s,K)\leq 0$ and so $U(s)\leq K$ for all $s\in[-a,a].$ On the other hand, Lemma \ref{underlineu} asserts that for all $s\in[-a,\underline{s}_0),$ we have
    \begin{align*}
        c\underline{U}'(s)\leq d_0\int_{\mathbb{R}}J(s-s')(\underline{U}(s')-\underline{U}(s))ds'+f(\underline{U}(s))-g_0(\underline{U}(s),\overline{V}(s)).
    \end{align*}  
    Since $V^0_i\leq\overline{V}_i$ for each $i\in[n],$ we have $g_0(\underline{U},\overline{V})\geq g_0(\underline{U},V^0)$ by assumption (C2).
    This means that
    $c\underline{U}'(s)\leq F_0(s,\underline{U}(s))$ and hence $\underline{U}(s)\leq U(s)$ for all $s\in[-a,\underline{s}_0).$ However, $\underline{U}(s)=0$ if $s\in[\underline{s}_0,a]$ and so $c\underline{U}'(s)\leq F_0(s,\underline{U}(s))$ holds for $s\in[\underline{s}_0,a]$. So far we have proved that
    $$\underline{U}(s)\leq U(s)\leq\overline{U}(s)\mbox{ for all }s\in[-a,a].$$

    Let us deal with $V$. 
    By Lemma \ref{lmaoverlinev}, for all $s\in[-a,\overline{s}_i)$,
    \begin{align*}
        c\overline{V}_i'(s)&\geq d_i\int_{\mathbb{R}}J(s-s')(\overline{V}_i(s')-\overline{V}_i(s))ds'+g_i(K,\overline{V}(s))\notag\\
        &\geq d_i\int_{\mathbb{R}}J(s-s')(\widetilde{V^0_i}(s')-\overline{V}_i(s))ds'+g_i^0(U^0(s),V^0(s))+\sum_{j\in[n]\setminus\{i\}}m_{ij}V_j^0(s)+m_{ii}\overline{V}_i(s).
    \end{align*}
    That is, 
    \begin{align}\label{overlineVinequality}
        c\overline{V}_i'(s)\geq F_i(s,\overline{V}_i(s))\mbox{ for all }s\in[-a,\overline{s}_i).
    \end{align}
    By the comparison principle for ODEs, $V_i(s)\leq\overline{V}_i(s)$ for all $s\in[-a,\overline{s}_i].$ But \eqref{overlineVinequality} also holds for $s\in(\overline{s}_i,a],$ meaning that  $V_i(s)\leq\overline{V}_i(s)$ for all $s\in[-a,a].$
    Likewise, Lemma \ref{lmaunderlinev} asserts that for all $s\in[-a,\underline{s}_i),$ we have 
    \begin{align*}
        c\underline{V}_i'(s)&\leq d_i\int_{\mathbb{R}}J(s-s')(\underline{V}_i(s')-\underline{V}_i(s))ds'+g_i(\underline{U}(s),\underline{V}(s))\\
        &\leq d_i\int_{\mathbb{R}}J(s-s')(\widetilde{V_i^0}(s')-\underline{V}_i(s))ds'+g_i^0(U^0(s),V^0(s))+\sum_{j\in[n]\setminus\{i\}}m_{ij}V_j^0(s)+m_{ii}\underline{V}_i(s),
    \end{align*}
    which means that
        $c\underline{V}_i'(s)\leq F_i(s,\underline{V}_i(s))\mbox{ for all }s\in[-a,\underline{s}_i).$
    Thus, $\underline{V}_i(s)\leq V_i(s)$ for all $s\in[-a,\underline{s}_i].$ Finally, since $\underline{V}_i(s)=0$ if $s\in[\underline{s}_i,a],$ we may conclude that
$$\underline{V}_i(s)\leq V_i(s)\leq\overline{V}_i(s)\mbox{ for all }s\in[-a,a].$$
Therefore, we conclude that
$(U,V)\in\Gamma_a.$

 Clearly, $\Gamma_a$ is nonempty, closed and convex in $\mathcal{C}([-a,a],\mathbb{R}^{n+1})$ with respect to the uniform norm. Thus, Schauder's fixed point theorem asserts that there is an $(U_a,V_a)\in\Gamma_a$ such that $T(U_a,V_a)=(U_a,V_a)$. By the equations satisfied by $(U_a,V_a),$ we see that $(U_a,V_a)\in\mathcal{C}^1([-a,a],\mathbb{R}^{n+1}).$ Also, we have $U_a(-a)\geq\underline{U}(-a)>0$ and $(V_a)_i(-a)\geq\underline{V}_i(-a)>0$ for each $i\in[n],$ which implies that $(U_a,V_a)$ is positive for $s\in[-a,a]$ by the uniqueness.

    For the $\mathcal{C}^{1,1}$ estimate, note that $0<U_a(s)\leq K$ and $V_a(s)>0$ for all $s\in[-a,a].$ Next, we have 
    \begin{align}\label{UaLip}
        |U_a'(s)|\leq \dfrac{2K}{c^*}\left(2d_0+2\delta +\sum_{i=1}^n\chi_i\right)=:L_1.
    \end{align}
    Then we have
    \begin{align}\label{ULip}
        |U_a(s)-U_a(t)|\leq L_1|s-t|\mbox{ for all }s,t\in[-a,a].
    \end{align}

To give the $\mathcal{C}^{1,1}$ estimate for $U_a$, recall that $(V_a)_i(s)\leq q_i$ for each $s\in[-a,a],i\in[n],$ by our construction of the supersolution. Then there is an $M_1>0$ independent of $a$ such that
    $$\|V_a\|_{\mathcal{C}([-a,a],\mathbb{R}^n)}\leq M_1.$$
    By the equations satisfied by $V_a,$ we see that
    $$|(V_a)_i'(s)|\leq\dfrac{2}{c^*}\Big(2d_iM_1+\sup_{u\in[0,K],v\in[0,M_1]^n}|g_i(u,v)|\Big)=:L_{2,i}.$$
    Then 
    \begin{align}\label{VLip}
        |(V_a)_i(s)-(V_a)_i(t)|\leq L_{2,i}|s-t|\mbox{ for each }s,t\in[-a,a].
    \end{align}

To establish global Lipschitz bounds, we define
\begin{align*}
    \widetilde{U_a}(s)=\begin{cases}
        U_a(a)\mbox{ if }s>a,\\
        U_a(s)\mbox{ if }-a\leq s\leq a,\\
        \underline{U}(s)\mbox{ if }s<-a;
    \end{cases}\quad (\widetilde{V_a})_i(s)=\begin{cases}
        (V_a)_i(a)\mbox{ if }s>a,\\
         (V_a)_i(s)\mbox{ if }-a\leq s\leq a,\\
        \underline{V}_i(s)\mbox{ if }s<-a,
        \end{cases}
        \quad i\in[n].
\end{align*}
Recall that $(U_a,V_a)(-a)=(\underline{U},\underline{V})(-a)$. Then $(\widetilde{U_a},\widetilde{V_a})$ is Lipschitz continuous on $\mathbb{R},$ where $\widetilde{V_a}=((\widetilde{V_a})_1,\dots,(\widetilde{V_a})_n).$

Now, we are ready to complete the estimate for $U_a$.
    Note that for $s,t\in[-a,a]$,
    \begin{align*}
         c|U_a'(s)-U_a'(t)|\leq& d_0\bigg|\int_{\mathbb{R}}J(s')(\widetilde{U_a}(s-s')-\widetilde{U_a}(t-s'))ds'\bigg|+(d_0+\delta)|U_a(s)-U_a(t)|\\&+|g_0(U_a,V_a)(s)-g_0(U_a,V_a)(t)|.
     \end{align*}
     By \eqref{UaLip} and the Lipschitz continuity of $\widetilde{U_a}$, we can easily pick an $\widetilde{L_1}>0$, independent of $a,$ such that  
     \begin{align*}
         |\widetilde{U_a}(s)-\widetilde{U_a}(t)|\leq \widetilde{L_1}|s-t|\mbox{ for all } s,t \in\mathbb{R}.
     \end{align*}
     Then
     \begin{align}\label{complicated}
         \bigg|\int_{\mathbb{R}}J(s')(\widetilde{U_a}(s-s')-\widetilde{U_a}(t-s'))ds'\bigg|\leq\|J\|_{L^1(\mathbb{R})}\widetilde{L_1}|s-t|\mbox{ for all } s,t \in\mathbb{R}.
     \end{align}

    Now, let
    \begin{align*}
        L_g:=\sum_{i,j=1}^nK(g_{ij}^0)'(0),\quad L_2:=\sum_{i=1}^nL_{2,i},\quad\chi:=\sum_{i=1}^n\chi_i.
    \end{align*}
    By assumptions (C2) and (C4), each $(g_{ij}^0)'$ is nonnegative and nonincreasing on $[0,\infty)$ and thus,
    \begin{align}\label{Lgbound}
        |D_vg_i^0(u,v)|_1\leq L_g\mbox{ for all }u\in[0,K],\,v\geq 0_n\mbox{ and }i\in[n].
    \end{align}
    Then by \eqref{Lgbound}, \eqref{ULip} and \eqref{VLip}, we have 
    \begin{align*}
        |g_0(U_a,V_a)(s)-g_0(U_a,V_a)(t)|\leq&\chi\cdot|U_a(s)-U_a(t)|+U_a(t)\left|\sum_{i,j=1}^n\left(g_{ij}^0((V_a)_j(s))-g_{ij}^0((V_a)_j(t))\right)\right|\\
        \leq&\chi\cdot|U_a(s)-U_a(t)|+L_g\cdot|V_a(s)-V_a(t)|_1\\
        \leq &(\chi L_1+L_g L_2)|s-t|.
    \end{align*}
    Let
    \begin{align*}
         L_1':=\dfrac{2}{c^*}\left(d_0\|J\|_{L^1(\mathbb{R})}\widetilde{L_1}+(d_0+\delta) L_1+(\chi L_1+L_g L_2)\right).
    \end{align*}
    Then
    \begin{align*}
        |U_a'(s)-U_a'(t)|\leq L_1'|s-t|\mbox{ for each }s,t\in[-a,a].
    \end{align*}
    
    Finally, we complete the estimate for $V_a'$. 
    Note that 
    $$|\widetilde{V_a}(s)|_1
    \leq\sum_{i=1}^n\overline V_i(s)
    \leq Q\mbox{ for each }s\in\mathbb{R},$$
    where 
    $Q:=\sum_{i=1}^nq_i.$
    
    As we have done in \eqref{complicated}, we can calculate
    \begin{align*}
        \bigg|\int_{\mathbb{R}}J(s')(\widetilde{V_a}(s-s')-\widetilde{V_a}(t-s'))ds'\bigg|_1\leq\|J\|_{L^1(\mathbb{R})}\widetilde{L_2}|s-t|,
    \end{align*}
    where $\widetilde{L_2}>0$ is a constant independent of $a.$
Similarly as $L_1',$ we let
    \begin{align*}
    L_2':=
    \dfrac{2}{c^*}\left((\max_{i\in[n]}d_i)\big(\|J\|_{L^1(\mathbb{R})}\widetilde{L_2}+L_2\big)+(\chi L_1+L_g L_2)+L_2\max_{j\in[n]}\sum_{i=1}^n|m_{ij}|\right). 
\end{align*}
Then
\begin{align*}
        |V_a'(s)-V_a'(t)|_1\leq L_2'|s-t|\mbox{ for each }s,t\in[-a,a].
    \end{align*}
Therefore, the desired estimate follows.
\end{proof}

Finally, we are ready to prove Proposition \ref{superexistence}.
\begin{proof}[Proof of Proposition \ref{superexistence}]
    For all large $k\in\mathbb{N},$ pick a  positive $(U_k,V_k)\in \Gamma_k\cap\mathcal{C}^1([-k,k],\mathbb{R}^{n+1})$ such that $T(U_k,V_k)=(U_k,V_k).$ By Lemma \ref{c1estimates}, we can conclude, by passing to a subsequence, that
    \eqref{epidemictws} admits a nonnegative bounded solution $(U_c,V_c)$ on $\mathbb{R}$ with 
    $$\underline{U}\leq U_c\leq K\mbox{ and }\underline{V}_i\leq (V_c)_i\leq\overline{V}_i\mbox{ on }\mathbb{R}\mbox{ for each }i\in[n].$$
    Since $(\underline{U},\underline{V})(-\infty)=(\overline{U},\overline{V})(-\infty)=E_0,$ we see that $(U_c,V_c)(-\infty)=E_0.$ 
     Finally, since $(V_c)_i(\underline{s}_i-1)\geq\underline{V}_i(\underline{s}_i-1)>0$ for each $i\in[n]$, we conclude that $(U_c,V_c)$ is positive on $\mathbb{R}$ by Lemma \ref{lmapositive} (b).
     Moreover,
     \eqref{unibddwrtc} is straightforward from the proof of Lemma \ref{c1estimates}, as the constant $C$ can be chosen uniformly for $c$ in any bounded subset of $(c^*,\infty)$.
\end{proof}

\section*{Acknowledgements}
CHW is supported by the National Science and Technology Council of Taiwan (NSTC 113-2628-M-A49-004-MY4) and the National Center for Theoretical Sciences. 

\printbibliography
\end{document}